\documentclass[11pt]{amsart}

\usepackage{amsaddr}

\usepackage{hyperref}
\usepackage{xcolor}
\usepackage{subcaption}
\usepackage{enumerate}
\usepackage{ifthen}
\usepackage{wasysym}

\hypersetup{
  colorlinks = true,
  citecolor = purple,
  linkcolor = blue,
  urlcolor = blue
}

\usepackage[margin=3cm]{geometry}

\usepackage{amsmath}
\usepackage{amsthm}
\usepackage{mathtools}
\usepackage{amssymb}
\usepackage{marvosym} 
\usepackage{tikz}
\usepackage{pgfplots}
\usepackage{cleveref}
\usepackage{algorithm2e}

\pgfplotsset{compat=1.18}

\RestyleAlgo{ruled}

\SetAlgorithmName{Model}{model}{List of Models}

\newtheorem{theorem}{Theorem}
\newtheorem{idea}{Idea}

\newtheorem{conjecture}{Conjecture}
\newtheorem{definition}[theorem]{Definition}
\newtheorem{lemma}[theorem]{Lemma}
\newtheorem{claim}[theorem]{Claim}
\newtheorem{corollary}[theorem]{Corollary}

\makeatletter
\newcommand{\dotminus}{\mathbin{\text{\@dotminus}}}
\newcommand{\@dotminus}{%
  \ooalign{\hidewidth\raise1ex\hbox{.}\hidewidth\cr$\m@th-$\cr}%
}
\makeatother
\newcommand{\tshirt}[4]{
    \begin{scope}[scale=#2]
    \fill[#1] (#3 + 1, #4 + 0) -- (#3 + 4, #4 + 0) -- (#3 + 4, #4 + 2) -- (#3 + 3.5, #4 + 4) --  (#3 + 1.5, #4 + 4) -- (#3 + 1, #4 + 2) -- cycle;
    \draw[black] (#3 + 1, #4 + 0) -- (#3 + 4, #4 + 0) -- (#3 + 4, #4 + 2) -- (#3 + 3.5, #4 + 4) --  (#3 + 1.5, #4 + 4) -- (#3 + 1, #4 + 2) -- cycle;
    \fill[#1] (#3 + 1.5, #4 + 4) -- (#3 + 0, #4 + 3) -- (#3 + 0.5, #4 + 2) -- (#3 + 1, #4 + 2) -- cycle;
    \draw[black] (#3 + 1.5, #4 + 4) -- (#3 + 0, #4 + 3) -- (#3 + 0.5, #4 + 2) -- (#3 + 1, #4 + 2) -- cycle;
    
    \fill[#1] (#3 + 3.5, #4 + 4) -- (#3 + 5, #4 + 3) -- (#3 + 4.5, #4 + 2) -- (#3 + 4, #4 + 2) -- cycle;
    \draw[black] (#3 + 3.5, #4 + 4) -- (#3 + 5, #4 + 3) -- (#3 + 4.5, #4 + 2) -- (#3 + 4, #4 + 2) -- cycle;
    \fill[white] (#3 + 2.5, #4 + 4) ellipse (0.5cm and 0.25cm);
   \draw[black] (#3 + 2.5, #4 + 4) +(-0.5,0) arc[start angle=180, end angle=360, x radius=0.5cm, y radius=0.25cm];

    \end{scope}
}

\newcommand{\E}{\mathbb{E}}



\bibliographystyle{plainurl}

\title{Assortment Optimization for Conference Goodies\\ with Indifferent Attendees} 


\author{Fernanda Guti\'errez$^\dagger$ and Bernardo Subercaseaux$^\star$}

\address{$^\dagger$University of Chile, $^\star$Carnegie Mellon University. \\Email: {\rm  \href{mailto:f.gutierrez.b@ing.uchile.cl}{f.gutierrez.b@ing.uchile.cl} and \href{mailto:bersub@cmu.edu}{bersub@cmu.edu}}}

\begin{document}

\begin{abstract}
Conferences such as \textsf{FUN with Algorithms} routinely buy goodies (e.g., t-shirts, coffee mugs, etc) for their attendees. Often, said goodies come in different types, varying by color or design, and organizers need to decide how many goodies of each type to buy. We study the problem of buying optimal amounts of each type under a simple model of preferences by the attendees: they are indifferent to the types but want to be able to choose between more than one type of goodies at the time of their arrival.
The indifference of attendees suggests that the optimal policy is to buy roughly equal amounts for every goodie type. Despite how intuitive this conjecture sounds, we show that this simple model of assortment optimization is quite rich, and even though we make progress towards proving the conjecture (e.g., we succeed when the number of goodie types is $2$ or $3$), the general case with $K$ types remains open. We also present asymptotic results and computer simulations, and finally, to motivate further progress, we offer a reward of $\$100$\textsf{usd} for a full proof.
\end{abstract}

\maketitle

\section{Introduction}\label{sec:intro}
Organizing a good conference can be a challenging task involving a wide variety of sub-problems. Common subroutines of conference organization include scheduling problems concerning the design of a good program, matching problems for assigning reviewers to papers, and even routing problems to provide indications for the attendees to arrive at the conference venue. In this paper we focus on an often overlooked aspect of conference organization, that of buying the right goodies to congratulate attendees.

The conference organizers must buy $N$ goodies (e.g., t-shirts) as they anticipate $N$ attendees.  The goodies come in $K$ different types (e.g., colors). \textbf{How many of each type should the organizers buy?} This is the central question of \emph{assortment optimization}, the family of problems concerning an agent that must select which products to buy in order to maximize some function (e.g., profit) under certain assumptions about their clients.

\paragraph*{\textbf{General model of assortment optimization.}} Let $N, K$ be positive integers. An \emph{initial assortment} is a vector $(n_1, \ldots, n_K) \in \mathbb{N}^K$, with $\sum_{i=1}^K n_i = N$, that represents how many copies of each type of item is bought initially. Let $\mathcal{P}_K$ be set probability distributions over the $K$ item types, meaning that 
\[
\mathcal{P}_K = \left\{ \vec{v} \in [0, 1]^K \; \mid \; \sum_{k=1}^K \vec{v}_k  = 1 \right\}.
\]
Then, clients $a_i, \ldots, a_n$ correspond to functions from $\mathbb{N}^K \to \mathcal{P}_K$. That is, upon seeing how many items of each type remain, a client will choose which type to get, possibly using randomness.

\Cref{fig:ex1} illustrates a concrete example. We now state the model of behavior of the attendees. 

\textbf{Model of indifferent attendees.\footnote{The general terminology of \emph{items} and \emph{clients} corresponds to \emph{goodies} and \emph{attendees} in our particular context.}} There are $N$ attendees $a_1, \ldots, a_N$, and $K$ types of goodies, whose initial stocks are $n_1, \ldots, n_K$. Sequentially, each attendee $a_t$, starting from $a_1$ and finishing by $a_N$, will choose one type $1 \leq k \leq K$ of goodie whose stock at the time $n_k^{(t)}$ is nonzero, get a goodie of that type, and decrease the stock of the type by $1$ ($n_k^{(t+1)} \gets n_k^{(t)}-1$).  In this model, attendees are \emph{indifferent} to the type of goodies they get, meaning that attendee $a_t$ will choose a type of goodie uniformly at random from those whose remaining stock is positive at time $t$. We will say attendee $a_t$ is \emph{unhappy} if at time $t$ there are only goodies of a single type remaining.  
Continuing with the example of~\Cref{fig:ex1}, suppose attendees happen to make the following choices:
\begin{align*}
    a_1 & \mapsto \textsf{red } \quad \quad 
    a_2  \mapsto \textsf{green} \quad \quad
    a_3  \mapsto \textsf{red}, \\
    a_4 & \mapsto \textsf{blue} \quad \quad
    a_5  \mapsto \textsf{green} \quad \quad
    a_6  \mapsto \textsf{blue}, \\
          a_7 & \mapsto \textsf{blue} \quad \quad
           a_8  \mapsto \textsf{green} \quad \quad
            a_9  \mapsto \textsf{green}. 
\end{align*}
In this scenario, there will be $2$ unhappy attendees; $a_8$ and $a_9$ can only choose green, and therefore do not get the pleasure of using randomness. Formally, the model of attendees described corresponds to model $\mathcal{M}$ (\ref{alg:model-M}).
\newcommand{\bettergreen}{green!70!black}
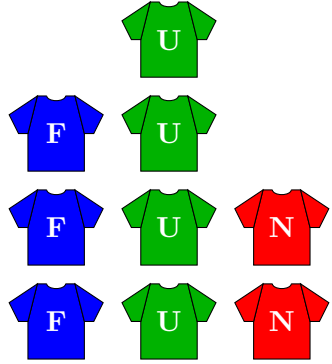
\begin{figure}
    \centering
    \begin{tikzpicture}

        \tshirt{blue}{0.25}{0}{0}
        \tshirt{blue}{0.25}{0}{5}
        \tshirt{blue}{0.25}{0}{10}
        
        \tshirt{\bettergreen}{0.25}{6}{0}
        \tshirt{\bettergreen}{0.25}{6}{5}
        \tshirt{\bettergreen}{0.25}{6}{10}
        \tshirt{\bettergreen}{0.25}{6}{15}

        \tshirt{red}{0.25}{12}{0}
        \tshirt{red}{0.25}{12}{5}

        \node[color=white] (f1) at (0.625, 0.50)
        {\textbf{F}}; 

        \node[color=white] (f2) at (0.625, 1.75)
        {\textbf{F}};  

         \node[color=white] (f3) at (0.625, 3)
        {\textbf{F}};

        \node[color=white] (u1) at (2.125, 0.50)
        {\textbf{U}}; 
        \node[color=white] (u2) at (2.125, 1.75)
        {\textbf{U}}; 
        \node[color=white] (u3) at (2.125, 3)
        {\textbf{U}}; 
        \node[color=white] (u4) at (2.125, 4.25)
        {\textbf{U}}; 

         \node[color=white] (n1) at (3.625, 0.50)
        {\textbf{N}}; 
         \node[color=white] (n2) at (3.625, 1.75)
        {\textbf{N}};

    \end{tikzpicture}
    \caption{An initial assortment of $N = 9$ t-shirts of $K=3$ different types.}
    \label{fig:ex1}
\end{figure}
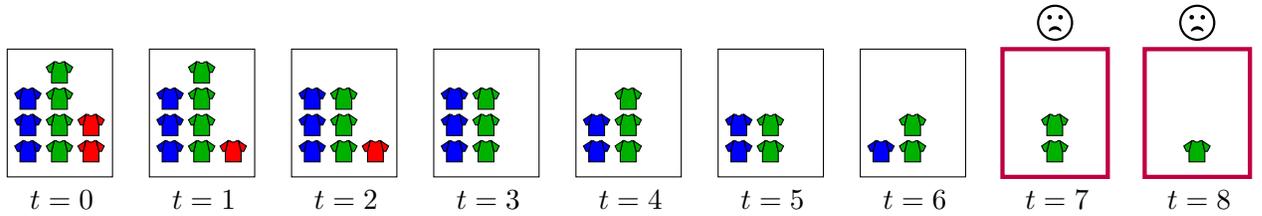
\begin{figure}
    \centering
    \begin{tikzpicture}
        \tshirt{blue}{0.07}{0}{0}
        \tshirt{blue}{0.07}{0}{5}
        \tshirt{blue}{0.07}{0}{10}
        \tshirt{\bettergreen}{0.07}{6}{0}
        \tshirt{\bettergreen}{0.07}{6}{5}
        \tshirt{\bettergreen}{0.07}{6}{10}
        \tshirt{\bettergreen}{0.07}{6}{15}
        \tshirt{red}{0.07}{12}{0}
        \tshirt{red}{0.07}{12}{5}
          \newcommand{\figGap}{27}
        \tshirt{blue}{0.07}{0 + 1*\figGap}{0}
        \tshirt{blue}{0.07}{0+ 1*\figGap}{5}
        \tshirt{blue}{0.07}{0+ 1*\figGap}{10}
        \tshirt{\bettergreen}{0.07}{6+ 1*\figGap}{0}
        \tshirt{\bettergreen}{0.07}{6+ 1*\figGap}{5}
        \tshirt{\bettergreen}{0.07}{6+ 1*\figGap}{10}
        \tshirt{\bettergreen}{0.07}{6+ 1*\figGap}{15}
        \tshirt{red}{0.07}{12+ 1*\figGap}{0}

          \tshirt{blue}{0.07}{0 + 2*\figGap}{0}
        \tshirt{blue}{0.07}{0+ 2*\figGap}{5}
        \tshirt{blue}{0.07}{0+ 2*\figGap}{10}
        \tshirt{\bettergreen}{0.07}{6+ 2*\figGap}{0}
        \tshirt{\bettergreen}{0.07}{6+ 2*\figGap}{5}
        \tshirt{\bettergreen}{0.07}{6+ 2*\figGap}{10}
        \tshirt{red}{0.07}{12+ 2*\figGap}{0}

          \tshirt{blue}{0.07}{0 + 3*\figGap}{0}
        \tshirt{blue}{0.07}{0+ 3*\figGap}{5}
        \tshirt{blue}{0.07}{0+ 3*\figGap}{10}
        \tshirt{\bettergreen}{0.07}{6+ 3*\figGap}{0}
        \tshirt{\bettergreen}{0.07}{6+ 3*\figGap}{5}
        \tshirt{\bettergreen}{0.07}{6+ 3*\figGap}{10}

          \tshirt{blue}{0.07}{0 + 4*\figGap}{0}
        \tshirt{blue}{0.07}{0+ 4*\figGap}{5}
        \tshirt{\bettergreen}{0.07}{6+ 4*\figGap}{0}
        \tshirt{\bettergreen}{0.07}{6+ 4*\figGap}{5}
        \tshirt{\bettergreen}{0.07}{6+ 4*\figGap}{10}

          \tshirt{blue}{0.07}{0 + 5*\figGap}{0}
        \tshirt{blue}{0.07}{0+ 5*\figGap}{5}
        \tshirt{\bettergreen}{0.07}{6+ 5*\figGap}{0}
        \tshirt{\bettergreen}{0.07}{6+ 5*\figGap}{5}

          \tshirt{blue}{0.07}{0 + 6*\figGap}{0}
        \tshirt{\bettergreen}{0.07}{6+ 6*\figGap}{0}
        \tshirt{\bettergreen}{0.07}{6+ 6*\figGap}{5}

        \tshirt{\bettergreen}{0.07}{6+ 7*\figGap}{0}
        \tshirt{\bettergreen}{0.07}{6+ 7*\figGap}{5}

        \tshirt{\bettergreen}{0.07}{6+ 8*\figGap}{0}

    \foreach \i in {0,...,8} { 
       \node (t\i) at (0.625 + 27*0.07*\i, -0.5) {$t = \i$};
       
        \ifthenelse{\i < 7}{
            \draw (-0.1 + 27*0.07*\i , -0.2) -- (1.3 + 27*0.07*\i , -0.2) -- (1.3 + 27*0.07*\i , 1.5) -- (-0.1 + 27*0.07*\i , 1.5) -- cycle;
        }{
        \draw[ultra thick, purple] (-0.1 + 27*0.07*\i , -0.2) -- (1.3 + 27*0.07*\i , -0.2) -- (1.3 + 27*0.07*\i , 1.5) -- (-0.1 + 27*0.07*\i , 1.5) -- cycle;
        \node (f\i) at (0.6 + 27*0.07*\i, 1.85) {\huge \frownie{}};
        } 
    }
    \end{tikzpicture}
    \caption{Illustration of a sequence of attendants over the initial assortment depicted in~\Cref{fig:ex1}. The last two time steps include unhappy attendees. }
\end{figure}
\begin{algorithm}[hbt!]
\caption{$\mathcal{M}$}\label{alg:model-M}
\KwData{Values $n_1, \ldots, n_K$}
\KwResult{Realization of the r.v. $u(n_1, \ldots, n_K)$ (i.e., number of unhappy attendees) }
\For{$i \in \{1, \ldots, K\}$}{
   $ n_i^{(0)} \gets n_i $\
}
$t \gets 0$\\
$\textsf{support}_t \gets \{i \mid n_i^{(t)} \neq 0\}$\\
\While{$|\textsf{support}_t| > 1$}{
  $t \gets t+1$\\
  $a_t \gets \textsf{Uniform}(\textsf{support}_t)$\\
  \For{$i \in \{1, \ldots, K\}$}{
    \eIf{$i = a_t$}{
    $n_i^{(t)} \gets n_i^{(t-1)} - 1$
   } {
    $n_i^{(t)} \gets n_i^{(t-1)}$
   }
  }
     $\textsf{support}_t \gets \{i \mid n_i^{(t)} \neq 0\}$\\
}
\Return{$N - t$, or equivalently, $n_{\textsf{support}_t}^{(t)}$.}
\end{algorithm}
Having formalized the model, we can state the question at the heart of this paper:
\begin{center}
   \emph{Given $N$ and $K$, what initial assortment $(n_1, \ldots , n_K)$ minimizes the expected number of unhappy attendees $\mathbb{E}[u(n_1, \ldots , n_K)]$?}
\end{center}
The worst possible choice for the conference organizers is of course to buy a single goodie type, setting $n_i = N$ and $n_j = 0$ for every $j \neq i$. In contrast, the best choice to minimize unhappy attendees seems intuitively to be for the organizers to get roughly the same amount $n_i$ of each type of goodie. Therefore, we have the following conjecture:

\begin{conjecture}
	For any integers $N, K$, there is an initial assortment $n^\star_1, \ldots, n^\star_K$  that minimizes
    $\mathbb{E}[u(n^\star_1, \ldots, n^\star_K)]$
    such that $\max_{i, j} |n^\star_i - n^\star_j| \leq 1$. 
	\label{conjecture:main}
\end{conjecture}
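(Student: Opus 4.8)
My plan is to prove the stronger statement that $\E[u(\cdot)]$ is \emph{Schur-convex} as a function of the assortment, so that its minimum over the compositions of $N$ into $K$ parts is attained at the most balanced one. Since $\E[u]$ is symmetric under permuting coordinates (the model $\mathcal{M}$ treats the types identically), it suffices to establish the following \emph{balancing inequality}: whenever $n_i \ge n_j + 2$,
\[
\E[u(\dots, n_i, \dots, n_j, \dots)] \;\ge\; \E[u(\dots, n_i - 1, \dots, n_j + 1, \dots)],
\]
all other coordinates held fixed. Granting this, start from any optimal assortment and repeatedly move one unit from a current maximum coordinate to a current minimum coordinate; each such step weakly decreases $\E[u]$ by the inequality and strictly decreases $\sum_k n_k^2$, so the process terminates at a balanced assortment (one with $\max_{i,j}|n_i - n_j| \le 1$) whose value is no larger than the optimum. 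That balanced assortment is therefore also optimal, which is exactly \Cref{conjecture:main}.

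\textbf{A closed form via exponential clocks.} The key tool I would use to attack the balancing inequality is a continuous-time embedding. Give each type $k$ an independent rate-$1$ Poisson clock that removes one item per tick and stops once the type is empty; by memorylessness, at every instant the next type to tick is uniform among the nonempty types, so the order and counts of ticks reproduce exactly the process $\mathcal{M}$. Type $k$ thus empties at an independent time $T_k \sim \mathrm{Gamma}(n_k,1)$, and an attendee is unhappy precisely when their tick occurs after the second-largest of the $T_m$. Writing $S^k_\ell$ for the time of the $\ell$-th tick of type $k$ and summing the indicator that $S^k_\ell$ exceeds every competing emptying time, one obtains the clean identity
\[
\E[u(n_1,\dots,n_K)] \;=\; \sum_{k=1}^{K} \sum_{\ell=1}^{n_k} \Pr\!\left[\, G_\ell > G_{n_m}\ \text{for all } m \ne k \,\right],
\]
where all the $G$'s are independent with $G_r \sim \mathrm{Gamma}(r,1)$. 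This turns the whole problem into inequalities between sums of Gamma--Beta tail probabilities.

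\textbf{Where the difficulty lies.} Each summand is a multivariate Beta-type probability, and for $K=2$ it collapses to a binomial tail, $\Pr[G_\ell > G_b] = \Pr[\mathrm{Bin}(\ell+b-1,\tfrac12) \le \ell-1]$, so the balancing inequality becomes a finite and checkable comparison of binomial sums; I expect the same Beta machinery to settle $K=3$. The main obstacle is general $K$: a balancing transfer perturbs \emph{every} term of the double sum at once, because the types $i$ and $j$ appear as competitors in all the other bins' probabilities, so there is no term-by-term domination. I would first try to prove the inequality by coupling the two clock systems, but the obvious coupling---shared exponential streams, with type $i$ using one fewer tick and type $j$ one more---does \emph{not} dominate pointwise: it is easy to exhibit a realization (e.g. type-$i$ ticks at $1,2,3$ and type-$j$ ticks at $2.5,10$, comparing $(3,1)$ with $(2,2)$) in which the balanced configuration has strictly more unhappy attendees. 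I therefore anticipate that the crux is a genuinely global argument controlling the aggregate of these Beta probabilities, and this is exactly the step I expect to remain the sticking point for general $K$.
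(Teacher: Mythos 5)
There is a genuine gap, and it is at the very first step of your plan. The ``stronger statement'' you propose to prove --- Schur-convexity of $\E[u]$, i.e.\ the balancing inequality for \emph{every} pair with $n_i \ge n_j + 2$ --- is false. The paper exhibits explicit counterexamples: $h(1,3,5) > h(2,2,5)$ and $h(2,7,8) > h(3,6,8)$ (with $h$ the expected number of happy attendees), so transferring one unit from the middle pile to the smallest pile can strictly \emph{increase} the expected number of unhappy attendees. Your iterative argument only ever performs transfers from a current maximum to a current minimum, and for those moves the needed inequality is exactly \Cref{idea:max-to-min} of the paper; but that weaker, position-dependent inequality is not a Schur condition, and your proposed route of establishing the pairwise inequality for all $i,j$ cannot succeed. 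This distinction is precisely what makes the problem hard: the property of being the argmax/argmin is not preserved by one step of the recursion for $h$, which is why the paper's proof for $K=3$ needs a whole family of interlocking inductive lemmas (DILT, DITT, DTP, CTT, THTB, OHOB) rather than a single balancing lemma, and why the general case remains open. Your own closing observation that the natural coupling fails pointwise is consistent with this, but the failure is worse than a failed coupling --- the expectation inequality itself is false for non-extremal pairs.

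Beyond that, note that the statement you are addressing is a conjecture which the paper does not prove either (it is settled there only for $K=2$ and $K=3$), and your text likewise does not prove it: you explicitly leave the aggregate Gamma--Beta inequality for general $K$ unresolved, so what you have is a program, not a proof. The one piece of your proposal that is both correct and genuinely different from the paper is the exponential-clock embedding: the identity
\[
\E[u(n_1,\dots,n_K)] \;=\; \sum_{k=1}^{K} \sum_{\ell=1}^{n_k} \Pr\!\left[\, G_\ell > G_{n_m}\ \text{for all } m \ne k \,\right],
\]
with independent $G_r \sim \mathrm{Gamma}(r,1)$, is valid (by memorylessness the merged clocks reproduce Model $\mathcal{M}$, and type $k$'s $\ell$-th tick is unhappy iff it falls after every other emptying time $T_m$), and it gives a cleaner general-$K$ closed form than the paper's Banach-matchbox-style computation in the algebraic section, which is carried out only for $K=2$. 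If you pursue this, the correct target is the max-to-min inequality (\Cref{idea:max-to-min}), not Schur-convexity, and the counterexamples above are a useful sanity check on any candidate argument.
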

 
Note that~\Cref{conjecture:main} implies that in case $K$ divides $N$, there will be optimal values $n^\star_i = N/K$. In other words, the conjecture states that $u$ can be minimized in expectation by choosing $n^\star_1, \ldots, n^\star_K$ as close to $N/K$ as possible.
An intuitive reason to believe in~\Cref{conjecture:main} is to consider deterministic approximations of Model~\ref{alg:model-M}. For example, if instead of choosing uniformly at random between the $k \leq K$ remaining types at time $t$, the attendee $a_t$ were to get a $\frac{1}{k}$ fraction of goodie of each remaining type, then the conjecture is almost trivially true. For another example, if attendees were to alternate which of the remaining goodie types to get in a round-robin fashion (i.e., $a_1$ takes from type $1$, $a_2$ from type $2$, $a_K$ from type $K$, $a_{K+1}$ from type $1$, and so on), then the conjecture would also follow easily. Both of these models behave similarly to Model~\ref{alg:model-M} in the sense that in all three of them, the expected number of time steps for any type to decrease its remaining count by $1$ is exactly the number of remaining types.
Another reason to believe in~\Cref{conjecture:main} is that we have verified it computationally for every $N$ up to $30$. The main goal of our paper is therefore to explore different aspects of this conjecture.
 

\subsection{Context and Related Work}

\Cref{conjecture:main} was presented by professor Will Ma to the second author in 2022, during an \emph{Open Problems} session at the Simons Institute for the Theory of Computing. The original presentation of the problem was roughly the following:

\emph{``You have to distribute $N$ rocks into $K$ non-empty piles. Then, round by round, a rock will be removed from a non-empty pile chosen uniformly at random. The process ends when a single non-empty pile remains. The goal is to maximize the expected number of rocks removed, or equivalently, to minimize the expected number of remaining rocks.''}\\

Even though the considered model (Model~\ref{alg:model-M}) is not very realistic when it comes to human behavior, it represents one of the simplest preference models whose behavior we do not fully understand. For example, a slightly more realistic model would be to assume that each attendee $a_t$ has a predefined preference $p_t \in \{1, \ldots, K\}$, and they are fully happy if at time $t$ there are items of type $p_t$ remaining, partially unhappy if there are not but they can still choose among other items, and fully unhappy if they are forced to take an item of a type that does not match their preference. Several similar models can be analyzed, and their behavior is often more complicated than the one studied in this article. For more realistic preference models we refer the reado to e.g.,~\cite{aouadGreedyLikeAlgorithmsDynamic2018,liangAssortmentInventoryPlanning2020, zhangLeveragingDegreeDynamic2023}
In terms of related work, the starting point is \emph{Banach's\footnote{According to William Feller~\cite{Feller1971-uw}, Stefan Banach did not come up with the problem, but rather the problem was
presented by Hugo Steinhaus as a humorous reference to Banach’s smoking habits.} Matchbox Problem}, which can be described as follows: \emph{``A mathematician keeps two matchboxes at all times, one in their left pocket and the other in the right pocket. Any time they want to light up a cigarette, they choose a pocket uniformly at random and pick a match from it. If each matchbox started with $N/2$ matches, what is the probability that when the mathematician opens for the first time an empty matchbox there are exactly $m$ matches in the non-empty box?''} The answer to this textbook exercise (e.g., \cite{Petkovic2009-ix}) is exactly 
\[
\binom{N-m}{m}\left(\frac{1}{2}\right)^{N-m},
\]
and it is a building block of the algebraic approach we show in~\Cref{sec:algebraic}. Another related model is that of Knuth's \emph{toilet paper} problem~\cite{duffyKNUTHGENERALISATIONBANACH2004, knuthToiletPaperProblem1984}, where two toilet paper rolls are available in the bathroom, and there are two kinds of people: \emph{big-choosers}, who choose the roll that has more paper remaining and \emph{little-choosers}, who go for the roll that is closer to running out of paper.
Cacoullos~\cite{cacoullosAsymptoticDistributionGeneralized1967} analyzed the asymptotic distribution of Banach's matchbox problem with $K$ boxes, thus getting closer to our specific setting. Several variants and generalizations of the hypotheses of Banach's matchbox problem have been studied, with applications to different areas of computer science and statistics~\cite{upfal, baringhausMatchboxBottleStorage2002,  kuba2011death, stadjeAsymptoticProbabilitiesSequential1998}.  In assortment optimization, many authors discuss more refined preference models and study them asymptotically or in terms of competitive gaps~\cite{aouadGreedyLikeAlgorithmsDynamic2018,liangAssortmentInventoryPlanning2020, zhangLeveragingDegreeDynamic2023}, but to the best of our knowledge, the conjecture we focus on has not been explicitly discussed before.

\subsection{Organization}
We start~\Cref{sec:k-2} by proving that~\Cref{conjecture:main} holds for $K=2$, which can be done by a very simple induction argument. Then, we discuss why this same argument does not immediately generalize for larger values of $K$. \Cref{sec:k-3} shows how a significantly more sophisticated induction scheme is enough to prove the conjecture for $K=3$. Next,~\Cref{sec:asymptotic} shows asymptotic results regarding the time at which the first type of goodie runs out.~\Cref{sec:experimental} presents simulations and experimental results. Then,~\Cref{sec:algebraic} presents an algebraic approach towards the conjecture, which albeit unsuccessful might be helpful for future work. We conclude in~\Cref{sec:conclusion} by discussing directions towards a full proof.

\section{\texorpdfstring{The simple case, $K=2$.}{The simple case, K=2.}}\label{sec:k-2}

A natural idea when attacking this problem is to consider first the case of two colors of goodies $(K=2)$. This case happens to admit a very simple induction proof.

\subsection{A simple proof by induction}

Consider that instead of minimizing the number of unhappy visitors, we maximize the number of happy visitors, which is clearly equivalent.
As $K=2$, we only have $n_1, n_2 \geq 0$. Let $h(n_1, n_2)$ be the expected number of happy visitors starting with the amounts $n_1, n_2$. Now consider the following intuitive idea.

\begin{idea}
	As long as the values $n_1, n_2, \ldots, n_K$ are not equal, we can improve (or at least not decrease the objective value) by moving $1$ unit from color $\arg\max \{n_1, \ldots, n_K\}$ to $\arg\min \{n_1, \ldots, n_K\}$.
	\label{idea:max-to-min}
\end{idea}

We will see that~\Cref{idea:max-to-min} works in the case $K=2$.
First it is easy to see that we have the following recursive equations for $h$:

\begin{equation}
	h(n_1, 0) = h(0, n_2) = 0,
    \label{eq:base-case}
\end{equation}
\begin{equation}
    \label{eq:recursive}
	h(n_1, n_2) = 1 + \frac{h(n_1 -1, n_2)}{2} +  \frac{h(n_1, n_2 - 1)}{2}, \quad \text{if } n_1, n_2 \geq 0.
\end{equation}

These recursive equations naturally allow for the following proof by induction.

\begin{lemma}
	Let $K = 2$, and let $s \coloneqq \min(n_1, n_2), l \coloneqq \max(n_1, n_2)$\footnote{We will use the letter $s$ to denote a \emph{smaller} size, the letter $m$ to denote a \emph{medium} size (for when we analyze $K=3$), and the letter $l$ to denote a \emph{larger} size.}. If $l > s > 0$, then
	\[
		h(s+1, l) \geq h(s, l+1).
	\]
	\label{lemma:add-to-min-K2}
\end{lemma}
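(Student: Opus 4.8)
The plan is to recast the inequality as the nonnegativity of a single ``balancing gain'' function and prove it by induction on the total stock. Setting $a \coloneqq s+1$ and $b \coloneqq l$, the hypotheses $l > s > 0$ become $2 \le a \le b$, and the target $h(s+1,l) \ge h(s,l+1)$ reads $h(a,b) \ge h(a-1,b+1)$. I will therefore define
\[
\phi(a,b) \coloneqq h(a,b) - h(a-1,b+1),
\]
and show $\phi(a,b) \ge 0$ for every $1 \le a \le b$, so that \Cref{lemma:add-to-min-K2} (and hence \Cref{idea:max-to-min} in the case $K=2$) is the instance $a \ge 2$. The induction is on $n \coloneqq a+b$, and I will use the symmetry $h(x,y)=h(y,x)$ of Model~\ref{alg:model-M} throughout.

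The key step is to expand both terms of $\phi(a,b)$ using~\eqref{eq:recursive}, which is legal once $a \ge 2$ (so all four arguments that appear are positive). From
\[
h(a,b) = 1 + \tfrac12 h(a-1,b) + \tfrac12 h(a,b-1), \qquad h(a-1,b+1) = 1 + \tfrac12 h(a-2,b+1) + \tfrac12 h(a-1,b),
\]
the constant $1$ and the common term $\tfrac12 h(a-1,b)$ cancel, leaving, after a one-line telescoping,
\[
\phi(a,b) = \tfrac12\bigl(h(a,b-1) - h(a-2,b+1)\bigr) = \tfrac12\bigl(\phi(a,b-1) + \phi(a-1,b)\bigr).
\]
This clean averaging identity is the heart of the argument: it writes the balancing gain at $(a,b)$ as the average of two balancing gains of strictly smaller total, so nonnegativity propagates upward automatically.

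It then remains to close the induction at the boundary. The base case $a=1$ is immediate, since $\phi(1,b) = h(1,b) - h(0,b+1) = h(1,b) \ge 0$ because $h$ is the expectation of a nonnegative random variable and $h(0,\cdot)=0$ by~\eqref{eq:base-case}. In the inductive step the term $\phi(a-1,b)$ is always in range, as $1 \le a-1 < a \le b$; but $\phi(a,b-1)$ requires $a \le b-1$, which fails exactly when $a = b$. This is the one subtlety I expect to be the main obstacle, and it is dispatched by symmetry: when $a=b$ we get $\phi(a,b-1)=\phi(a,a-1)=h(a,a-1)-h(a-1,a)=0$. Hence both summands are nonnegative, giving $\phi(a,b) \ge 0$ and completing the induction.
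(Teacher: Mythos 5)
Your proof is correct and is essentially the paper's own argument in different packaging: the averaging identity $\phi(a,b)=\tfrac12\bigl(\phi(a,b-1)+\phi(a-1,b)\bigr)$ is exactly the paper's single expansion of $h(s+1,l)$ via~\eqref{eq:recursive} followed by the two inductive substitutions $h(s,l)\ge h(s-1,l+1)$ and $h(s+1,l-1)\ge h(s,l)$, with the same boundary treatments (symmetry when $a=b$, and $h(0,\cdot)=0$ replacing the paper's ``trivial'' case, which your base case $a=1$ absorbs). The only cosmetic difference is that you name the gain $\phi$ and extend the claim to $s=0$, which streamlines the boundary bookkeeping but does not change the route.
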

\begin{proof}
	By induction on $N = s + l$. The base case is $N=3$ (with $l = 2 > s = 1$), where $h(2, 2) \geq h(1, 3)$ can be easily checked using~\Cref{eq:base-case,eq:recursive}. Then, for the inductive case, consider
	\begin{align*}
		h(s+1, l) &= 1 + \frac{h(s, l)}{2} + \frac{h(s+1, l-1)}{2}\\
		&\geq 1 + \frac{h(s-1, l+1)}{2} + \frac{h(s+1, l-1)}{2},
	\end{align*}
	where the inequality is trivial in case $l-1=0$, and follows from the inductive hypothesis otherwise. Note as well that
		 \[ h(s+1, l-1) \geq h(s, l), \]
		as if $s = l-1$ then we have equality, and otherwise we can apply the inductive hypothesis. Putting the last two equations together we conclude
		\[
			h(s+1, l) \geq 1 + \frac{h(s-1, l+1)}{2} + \frac{h(s, l)}{2} = h(s, l+1). 
		\]
\end{proof}

Before we can finish the proof for $K=2$, we introduce one last piece of notation.
\begin{definition}
For any initial assortment $S = (n_1, \ldots, n_K)$, we define its \emph{spread} $\sigma(S)$ as $\max(S) - \min(S)$.
\end{definition}
Intuitively,~\Cref{conjecture:main} corresponds to the existence of optimal solutions of minimum spread, and~\Cref{idea:max-to-min} corresponds to progressively decreasing the spread of a current solution. With this notation we can easily finish the proof for $K=2$.

\begin{theorem}
	~\Cref{conjecture:main} holds for $K=2$.
\end{theorem}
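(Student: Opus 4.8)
The plan is to show that some minimum-spread assortment is optimal by combining \Cref{lemma:add-to-min-K2} with the definition of \emph{spread}. The theorem asks us to prove \Cref{conjecture:main} for $K=2$, that is, that there is an optimal assortment $(n_1^\star, n_2^\star)$ with $|n_1^\star - n_2^\star| \leq 1$, equivalently $\sigma(S^\star) \leq 1$. Since minimizing expected unhappy attendees is equivalent to maximizing $h(n_1, n_2)$, it suffices to show that among all assortments summing to $N$, one of minimum spread maximizes $h$.

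First I would fix $N$ and consider the finite set of assortments $\{(s, N-s) : 0 \leq s \leq N\}$, parametrized by the smaller coordinate $s = \min(n_1, n_2)$ (note $h$ is symmetric in its two arguments, so we may always assume $n_1 = s \leq l = n_2$). The key claim is that $h(s, l)$ is nondecreasing as we move from the extreme assortment $(0, N)$ toward the balanced one. \Cref{lemma:add-to-min-K2} gives exactly this monotonicity step: whenever $l > s > 0$, moving one unit from the larger pile to the smaller pile does not decrease the objective, since $h(s+1, l-1) \geq h(s, l)$ is precisely the content of the lemma after relabeling (the lemma states $h(s+1, l) \geq h(s, l+1)$, i.e. that shifting mass toward balance is weakly better). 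I would therefore argue that starting from any assortment with $\sigma(S) \geq 2$, one can repeatedly apply this shift — each time strictly decreasing the spread by $2$ while weakly increasing $h$ — until the spread is $0$ (if $N$ is even) or $1$ (if $N$ is odd).

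The remaining bookkeeping is to handle the edge of this process carefully. I would set up a short induction or a direct chain of inequalities: if $S_0$ is any assortment and $S_1, S_2, \ldots$ denotes the sequence obtained by repeatedly transferring one unit from the larger to the smaller coordinate, then $h(S_0) \leq h(S_1) \leq \cdots \leq h(S^\star)$ where $S^\star$ is the final minimum-spread assortment reached. The one subtlety is ensuring \Cref{lemma:add-to-min-K2} applies at every step, which requires $s > 0$; but if $s = 0$ the assortment has an empty pile and yields $h = 0$ by \Cref{eq:base-case}, which is trivially no larger than the balanced value, so the degenerate starting case is immediate and for all later steps we have $s \geq 1$. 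Concluding, since every assortment is weakly dominated in $h$-value by a minimum-spread assortment, some minimum-spread assortment attains the maximum of $h$, proving the conjecture for $K=2$.

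I do not expect any genuine obstacle here: the entire difficulty of the $K=2$ case is already packaged inside \Cref{lemma:add-to-min-K2}, and the theorem is essentially a clean corollary of that monotonicity statement together with the observation that balancing strictly reduces spread each step. If anything, the only care needed is the relabeling between the lemma's phrasing ($h(s+1,l) \geq h(s,l+1)$) and the ``transfer toward balance is good'' interpretation, plus confirming termination of the balancing process in finitely many steps — both of which are routine.
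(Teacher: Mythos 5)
Your proposal is correct and follows essentially the same route as the paper: the heart of both arguments is applying \Cref{lemma:add-to-min-K2} to the pair $(s,l-1)$ to get $h(s+1,l-1)\geq h(s,l)$ whenever the spread is at least $2$ (with the $s=0$ case dispatched via \Cref{eq:base-case}). The only difference is presentational — you phrase it as a monotone balancing chain that terminates at a minimum-spread assortment, while the paper takes an optimal solution of minimum spread and derives a contradiction — but the mathematical content is identical.
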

\begin{proof}
	Consider an optimal solution $S^\star \coloneqq (n^\star_1, n^\star_2)$ whose spread $\sigma(S^\star)$ is a minimum among the set of optimal solutions, and let $l \coloneqq \max(n^\star_1, n^\star_2), s \coloneqq \min(n^\star_1, n^\star_2)$. If $s = 0$, the conjecture holds trivially by~\Cref{eq:base-case}, so we assume $s > 0$.
    Now, suppose that $l > s + 1$, expecting a contradiction. By applying~\Cref{lemma:add-to-min-K2} to  the pair $(s, l-1)$ we obtain
\[
	h(s+1, l-1) \geq h(s, l),
\]
implying that $(s+1, l-1)$ must also be an optimal solution, but \[\sigma(s+1, l-1) = l-s-2 < l-s = \sigma(s, l),\] thus contradicting the minimality of the spread of $S^\star$.
\end{proof}

\subsection{Why does this not scale to any $K$?}

We now explore what happens if one naively tries to extend this idea for larger values of $K$. First, let us pose the natural generalization of~\Cref{eq:base-case,eq:recursive}. 
\newcommand{\supp}{\operatorname{support}}
We will use notation 
\(
\supp(\vec{n}) = \{ i \mid \vec{n}_i \neq 0\}.
\), as well as $\vec{e}_i$ to denote the vector whose $i$-th component is $1$ and all the rest are $0$.\footnote{To avoid charging the notation, we will leave the dimension of $\vec{e}_i$ implicit as it will be always clear by context (i.e., equal to $K$).}

\newcommand{\vn}{\vec{n}}
\newcommand{\ve}{\vec{e}}

Then, for $\vec{n} = (n_1, \ldots, n_K)$, we have $h(\vec{n}) = 0$ if \, $ |\supp(\vn)| = 1$, and otherwise
\begin{equation}
\label{eq:general-recursive}
    h(\vec{n}) =
 1 + \frac{1}{|\supp(\vec{n})|} \cdot \left(\sum_{i \in \supp(\vec{n})} h(\vec{n} - \vec{e}_i) \right) .
\end{equation}

If we tried to recreate~\Cref{lemma:add-to-min-K2}, letting $s \in \arg\min_i \vn_i$, $l \in \arg\max_i \vn_i$, and assuming $\vn_l > \vn_s > 0$, we would like to prove that
\(
h(\vn + \ve_s) \geq h(\vn + \ve_l).
\) Using~\Cref{eq:general-recursive}, this would be equivalent to showing that
\[
\sum_{i \in \supp(\vn)} h(\vn + \ve_s - \ve_i) \geq \sum_{i \in \supp(\vn)} h(\vn + \ve_l - \ve_i).
\]
It is now tempting to use induction over each $\vn - \ve_i$ to show that 
\[
    h\left((\vn - \ve_i) + \ve_s \right) \geq  h\left((\vn - \ve_i) + \ve_l \right), \quad \forall i \in \supp(\vn).
\]
However, this would require $s \in \arg\min_j (\vn -\ve_i)_j$, and similarly $l\in \arg\max_j (\vn -\ve_i)_j$, which is not true in general. In particular, if there had been two different indices $l \neq l'$ such that $\{l, l'\} \subseteq \arg\max_j \vn_j$, then $l \not\in \arg\max (\vn -\ve_i)_j$ when $i = l'$. 

This issue suggests that the \emph{multiplicities} of values  $\vn_i$ need to be taken into account, and also that we need more refined induction hypotheses so that they are preserved by rounds of the process. We do this carefully for $K=3$ in~\Cref{sec:k-3}. We note as well that a crucial aspect of~\Cref{idea:max-to-min} is that a unit is moved from the maximum stock to the minimum, as opposed to simply moving a unit from any stock that is larger than another. In fact, that idea is simply incorrect as
\(
h(1, 3, 5) > h(2, 2, 5),
\)
or yet, $h(2, 7, 8) > h(3, 6, 8)$, and it is easy to find plenty of such counterexamples computationally. On the other hand, we have not been able to find any counterexamples for~\Cref{idea:max-to-min} despite extensive experimentation.

\section{\texorpdfstring{$K=3$: Wald and the inductive lemmas.}{K=3: Wald and the inductive lemmas.}}\label{sec:k-3}
In order to extend the result of ~\Cref{sec:k-2} to $K=3$ we will require a more sophisticated induction scheme. In particular, we will need to push several inductive lemmas at once. Before introducing said lemmas, we introduce one additional tool that we think can key for generalizing to arbitrary $K$: Wald's equation~\cite{waldGeneralizationsTheoryCumulative1945}.

\begin{definition}
    We define the \emph{``time of the first emptying event''}, $\tau(n_1, \ldots, n_K)$, as the random variable corresponding to the time step in which a color runs out of goodies for the first time. Formally, \[\tau = \min \{ t \mid \exists i \text { such that } n^{(t)}_i = 0 \land n^{(0)}_i > 0\}.\] 
\end{definition}

\begin{lemma}[Consequence of Wald's equation.]
	For any $K > 1$ and positive integers $n_1, \ldots, n_K$, 
	We have 
	\[
		\mathbb{E}[\tau(n_1, \ldots, n_K)] \leq K \cdot \min_c n_c.
	\]
	\label{lemma:wald}
\end{lemma}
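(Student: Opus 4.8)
The plan is to apply Wald's equation to a cleverly chosen random walk tracking one fixed color. Let $c$ be a color achieving $\min_c n_c$, so $n_c = \min_c n_c$. Consider the sequence of time steps of the process, and for each step $t \geq 1$ define an indicator $X_t$ that equals $1$ if the attendee $a_t$ chooses color $c$ and $0$ otherwise. The key observation is that at every time step $t \leq \tau$, color $c$ still has positive stock (by definition $\tau$ is the \emph{first} emptying event, and if $c$ is the color that empties we can handle that boundary case), so $c \in \supp(\vec{n}^{(t-1)})$ and the attendee picks uniformly among the surviving colors. I would first argue that the number of draws from color $c$ up to and including time $\tau$ is exactly $n_c$ (in the case that $c$ is the first color to empty) or at most $n_c$ (if a different color empties first, color $c$ has been decremented at most $n_c - 1$ times). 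Either way, $\sum_{t=1}^{\tau} X_t \leq n_c$.

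Next I would set up the stochastic-process bookkeeping so that Wald applies cleanly. The subtlety is that the probability of drawing color $c$ at step $t$ is $1/|\supp(\vec{n}^{(t-1)})|$, which is \emph{not} constant — it depends on how many colors survive, and $|\supp| \leq K$ throughout. So rather than a sum of i.i.d. variables, I would consider the \emph{compensated} process $M_t := \sum_{j=1}^{t} \bigl(X_j - 1/|\supp(\vec{n}^{(j-1)})|\bigr)$, which is a martingale with respect to the natural filtration, since $\E[X_j \mid \mathcal{F}_{j-1}] = 1/|\supp(\vec{n}^{(j-1)})|$ on the event that the process has not yet stopped. Applying the optional stopping theorem (the martingale version of Wald's identity) to the bounded stopping time $\tau$ gives
\[
\E\!\left[\sum_{t=1}^{\tau} \frac{1}{|\supp(\vec{n}^{(t-1)})|}\right] = \E\!\left[\sum_{t=1}^{\tau} X_t\right] \leq n_c.
\]
Since $|\supp(\vec{n}^{(t-1)})| \leq K$ for every $t$, each summand on the left is at least $1/K$, so the left-hand side is at least $\E[\tau]/K$. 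Rearranging yields $\E[\tau] \leq K \cdot n_c = K \cdot \min_c n_c$, which is exactly the claim.

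I would need to verify the hypotheses of optional stopping: $\tau$ is almost surely finite and in fact bounded (the process terminates in at most $N - 1$ steps since at least one unit is removed each round), so the stopping time is bounded and the martingale increments are bounded by $1$, making the application of optional stopping routine. The main obstacle I anticipate is the careful treatment of the boundary step $t = \tau$: at that step the draw $X_\tau$ may itself be the draw that empties color $c$, so I must make sure the indicator and the support size are evaluated with the correct timing convention (pre-draw support at time $t-1$ versus post-draw support at time $t$), and confirm that $\sum_{t=1}^{\tau} X_t \leq n_c$ holds with the right inequality direction regardless of which color triggers the emptying event. A slightly cleaner alternative, avoiding martingale machinery, is to phrase the argument directly via Wald's equation by viewing $\tau$ as a stopping time for the i.i.d.-free sum, but because the per-step success probability varies with $|\supp|$ the martingale formulation is the most transparent way to extract the factor of $K$ from the bound $|\supp| \leq K$.
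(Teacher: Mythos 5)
Your proof is correct, and it rests on the same underlying identity as the paper's: the expected number of draws of a fixed color $c$ within the first $\tau$ steps equals $\mathbb{E}[\tau]/K$, while that number is deterministically at most $n_c$. The executions differ, though. The paper couples the pre-$\tau$ dynamics with an infinite-supply process in which each of the $K$ colors is drawn i.i.d.\ with probability $1/K$ at every step, and then invokes the classical Wald equation simultaneously for all $K$ colors, verifying its hypotheses (the $\mathbf{1}_{\tau > n}$ condition and summability) by hand. You instead stay inside the actual process, track only the minimizing color, and apply optional stopping to the compensated martingale $M_t = \sum_{j \le t}\bigl(X_j - 1/|\mathrm{supp}(\vec{n}^{(j-1)})|\bigr)$; since $\tau \le N$ is a bounded stopping time with increments bounded by $1$, this is immediate and gives a shorter verification at the cost of martingale language. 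One simplification you missed: for $t \le \tau$ no color has yet emptied (the lemma assumes all $n_i$ positive), so the support has size exactly $K$ at every step you consider; your compensator is identically $1/K$, your displayed inequality is in fact the equality $\mathbb{E}[\tau]/K = \mathbb{E}\bigl[\sum_{t \le \tau} X_t\bigr] \le n_c$, and the worry about a varying success probability (and the crutch $|\mathrm{supp}| \le K$) is unnecessary --- this constancy is precisely what lets the paper work with genuinely i.i.d.\ Bernoulli$(1/K)$ variables. Your boundary bookkeeping at $t = \tau$ is fine: whichever color empties first, the number of color-$c$ draws among the first $\tau$ steps is at most $n_c$, which is the only direction the argument needs.
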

Because of the conditions for Wald's equation being mildly technical, we defer the proof of~\Cref{lemma:wald} to the appendix. An important particular case of this lemma that we will use explicitly is that of $K=2$, in which $h(n_1, n_2) = \mathbb{E}[\tau(n_1, n_2)$], and so  we get that $h(n_1, n_2) \leq 2 \min(n_1, n_2)$.

We are now ready to introduce the inductive lemmas:
\begin{enumerate}
	\item \emph{Don't Increase the Lonely Tower} (DILT). Intuitively, if there is a color that starts with more goodies than any other color, then that is the worst color to buy an additional goodie from. In other words, if the goodies from one color are building a \emph{``lonely tower''}, then don't increase it! 
	\begin{lemma}[DILT]
	Let $l > m \geq s$ be non-negative integers. Then
		\[
			h(l, m+1, s) \geq h(l+1, m, s) \quad \text{and} \quad  h(l, m, s+1) \geq h(l+1, m, s).
		\]
		\label{lemma:DILT}
	\end{lemma}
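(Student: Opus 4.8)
The plan is to prove both inequalities in~\Cref{lemma:DILT} by a single simultaneous induction on $N = l + m + s$, using the recursive equation~\eqref{eq:general-recursive}. The overall strategy mirrors the $K=2$ case, but now the arguments must track which colors are the maximum and minimum after a single step is taken, and the two inequalities will feed into each other. I would set up the induction so that the inductive hypothesis provides \emph{both} inequalities (the ``move from medium'' version $h(l,m+1,s)\geq h(l+1,m,s)$ and the ``move from small'' version $h(l,m,s+1)\geq h(l+1,m,s)$) for all assortments of smaller total size, and possibly companion facts such as a monotonicity statement analogous to $h(s+1,l-1)\geq h(s,l)$ from the $K=2$ proof.

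First I would expand both sides of, say, $h(l,m+1,s)\geq h(l+1,m,s)$ via~\eqref{eq:general-recursive}. Writing out the three (or fewer, if $s=0$) branches, the leading $1$ and the factor $1/|\supp|$ match on both sides whenever the supports agree, so it suffices to compare the three summed terms branch by branch. Concretely I would try to show the inequality termwise: for each color $i$ that loses a goodie, compare $h\big((l,m+1,s)-\ve_i\big)$ against $h\big((l+1,m,s)-\ve_i\big)$. The key observation, foreshadowed in the discussion of why the naive $K=2$ argument fails, is that after removing $\ve_i$ the roles of largest/medium/smallest may shift, so the correct inductive inequality to invoke differs across the three branches. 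I expect that in the branch where the large color is decremented the comparison reduces to one form of the inductive hypothesis, in the branch where the medium/small color is decremented it reduces to another, and one branch may require the monotonicity companion inequality rather than DILT itself. Ties among $l, m, s$ (e.g.\ $l=m+1$, or $m=s$) must be handled as separate cases, since they change which indices lie in $\arg\max$ and $\arg\min$ and hence which inductive hypothesis legitimately applies.

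The base cases are the small configurations where one of the coordinates hits $0$, so that the support drops to size $2$ and we can appeal to the $K=2$ results together with~\eqref{eq:base-case}; here~\Cref{lemma:wald} (specialized to $K=2$, giving $h(n_1,n_2)\le 2\min(n_1,n_2)$) may supply the quantitative slack needed to close a boundary comparison that the pure induction cannot reach. I would verify the genuinely atomic base case(s) directly from~\eqref{eq:base-case} and~\eqref{eq:recursive}.

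The hard part will be organizing the mutual induction so that the hypotheses one needs are exactly the ones available at smaller $N$, precisely because, as the authors note, $\arg\max$ and $\arg\min$ are not preserved under removing a unit when there are repeated extremal values. I anticipate that neither DILT inequality can be pushed through on its own: the termwise comparison for one will, in some branch, require the \emph{other} inequality (or a monotonicity lemma) at the smaller size, which is exactly why the lemma bundles two statements and why the section announces that ``several inductive lemmas'' must be advanced at once. Getting the case analysis on the multiplicities of $l, m, s$ to be exhaustive and to invoke only valid smaller-$N$ instances — rather than an instance whose extremal structure has been destroyed by the decrement — is where the real work lies, and I would expect Wald's bound to be the tool that rescues the one or two boundary branches where induction alone leaves a gap.
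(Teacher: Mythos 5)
Your skeleton --- induction on $l+m+s$, expanding both sides with \Cref{eq:general-recursive}, comparing branch by branch with the same decremented color paired on the two sides, and splitting into cases according to ties --- is indeed the skeleton of the paper's proof. But the plan has a genuine gap at exactly the point you defer as ``the real work'': the tie branch cannot be closed with the bundle you propose (the two DILT inequalities, the $K=2$ lemma, and Wald's bound). Concretely, in the case $s=0$, proving the second inequality $h(l,m,1)\geq h(l+1,m,0)$ requires, in the branch where the largest pile is decremented, the inequality $h(l-1,m,1)\geq h(l,m,0)$; when $l-1=m$ this reads $h(m,m,1)\geq h(m+1,m,0)$, which is the \nameref{lemma:DITT} statement --- it is not an instance of either DILT inequality, nor of the $K=2$ monotonicity lemma. \nameref{lemma:DITT} is in turn proved by an induction that consumes \nameref{lemma:DTP} and \nameref{lemma:OHOB}, which themselves rest on \nameref{lemma:CTT}, \nameref{lemma:THTB} and \Cref{lemma:wald}. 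So the mutual induction you must organize is the full scheme of six interdependent lemmas, not a two-statement bundle plus an unspecified ``monotonicity companion''; without naming the twin-tower inequality as the needed companion, the induction does not close.

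Two further concrete corrections. First, in that same $s=0$ case the two sides have supports of different sizes (three colors versus two), so your remark that the leading $1$ and the normalizing factor match ``whenever the supports agree'' does not apply there; the paper resolves this by rewriting $h(l+1,m,0)$ as $1+\tfrac{h(l,m,0)}{3}+\tfrac{h(l+1,m-1,0)}{3}+\tfrac{h(l+1,m,0)-1}{3}$, which creates the extra obligation $h(l,m,0)\geq h(l+1,m,0)-1$; this is discharged by reducing it to $h(l,m,0)\geq h(l+1,m-1,0)$ and invoking \Cref{lemma:add-to-min-K2}, not Wald. Second, your expectation that \Cref{lemma:wald} rescues the boundary branches of DILT is misplaced: Wald enters the scheme only through \nameref{lemma:THTB}, far upstream, and inside DILT the $K=2$ lemma supplies all the slack needed. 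The shape of your argument is right, but to turn it into a proof you must identify \nameref{lemma:DITT} as the required companion in the tie case, handle the unequal-support comparison explicitly, and commit to the larger system of mutually dependent lemmas rather than a self-contained two-part induction.
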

	\item \emph{Don't Increase the Twin Towers} (DITT).
		Intuitively, if two different colors have both the same amount of goodies and more than the third color, then don't increase the size of the \emph{twin towers} and buy from the third color instead.
		\begin{lemma}[DITT]
			Let $l > s$ be non-negative integers. Then
			\[
				h(l, l, s+1) \geq h(l+1, l, s).
			\]
			\label{lemma:DITT}
		\end{lemma}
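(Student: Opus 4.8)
The plan is to prove \Cref{lemma:DITT} by strong induction on $N = 2l+s+1$, run \emph{simultaneously} with \Cref{lemma:DILT} and one additional monotonicity invariant isolated below; throughout I use that $h$ is symmetric in its three arguments (immediate from the symmetry of Model~\ref{alg:model-M}). Assume first $s \ge 1$, so that both $(l,l,s+1)$ and $(l+1,l,s)$ have full support $\{1,2,3\}$ (the boundary case $s=0$ turns the right-hand side into the two-colour configuration $(l+1,l)$ and is dispatched separately using the $K=2$ results and the bound $h(\cdot,\cdot)\le 2\min(\cdot,\cdot)$ from \Cref{lemma:wald}). Expanding both sides with \eqref{eq:general-recursive} and using symmetry to merge the two equal first-coordinate terms gives
\[
h(l,l,s+1) = 1 + \tfrac13\bigl(2h(l-1,l,s+1) + h(l,l,s)\bigr),
\]
\[
h(l+1,l,s) = 1 + \tfrac13\bigl(h(l,l,s) + h(l+1,l-1,s) + h(l+1,l,s-1)\bigr).
\]
Cancelling the common term $h(l,l,s)$, \Cref{lemma:DITT} becomes equivalent to
\[
2\,h(l-1,l,s+1) \ \ge\ h(l+1,l-1,s) + h(l+1,l,s-1). \tag{$\dagger$}
\]

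Next I would bound each summand on the right of ($\dagger$) by $h(l-1,l,s+1)$. For the first, \Cref{lemma:DILT} (its second inequality, with largest pile $l$, medium $l-1$, smallest $s$, legal since $l>l-1\ge s$) gives $h(l,l-1,s+1)\ge h(l+1,l-1,s)$, and by symmetry $h(l,l-1,s+1)=h(l-1,l,s+1)$; this comparison lives at total $N-1$, so \Cref{lemma:DILT} is available from the inductive hypothesis. For the second summand, \Cref{lemma:DITT} itself at total $N-1$ (with parameters $l$ and $s-1$) yields $h(l,l,s)\ge h(l+1,l,s-1)$, which reduces the desired $h(l-1,l,s+1)\ge h(l+1,l,s-1)$ to $h(l-1,l,s+1)\ge h(l,l,s)$. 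Adding the two bounds establishes ($\dagger$), so the whole argument hinges on this last inequality.

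The crux --- and the step I expect to be the main obstacle --- is therefore the monotonicity statement
\[
h(l-1,l,s+1)\ \ge\ h(l,l,s),
\]
asserting that transferring one unit from a \emph{tied} maximum pile onto the strict minimum does not decrease $h$. This is neither a DILT nor a DITT instance (it moves mass off one of two equal maxima), so it must be carried as a third invariant in the joint induction. Expanding it via \eqref{eq:general-recursive} and cancelling reduces it, at total $N-2$, to $h(l-2,l,s+1)+h(l-1,l-1,s+1)\ge h(l-1,l,s)+h(l,l,s-1)$; here the pair $h(l-1,l-1,s+1)\ge h(l-1,l,s)$ closes by \Cref{lemma:DITT} at a smaller total, leaving the delicate \emph{distance-two} transfer $h(l-2,l,s+1)\ge h(l,l,s-1)$. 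This residual is genuinely harder: it cannot be split into two valid single-unit moves, since the intermediate inequality $h(l-2,l,s+1)\ge h(l-1,l,s)$ is in fact \emph{false} (e.g.\ $h(1,3,1)<h(2,3,0)$). Taming it --- presumably by strengthening the induction to a monotonicity hypothesis that is closed under one round of the recursion, while tracking the multiplicities of the pile sizes as flagged in \Cref{sec:k-2} --- is the heart of the $K=3$ argument.

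It remains only to handle the bookkeeping: the finitely many base cases of small $N$ by direct evaluation of \eqref{eq:general-recursive}; the boundary $s=0$ via the $K=2$ analysis; and the sub-case $s=l-1$ (where the DITT step above would require $l-1>s$) by a separate direct comparison. Once the tied-max-to-min invariant is secured, ($\dagger$) and hence \Cref{lemma:DITT} follow for all $l>s\ge 0$.
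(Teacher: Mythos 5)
Your opening moves coincide with the paper's: expand both sides via \Cref{eq:general-recursive}, cancel the common $h(l,l,s)$, and bound the first remaining term by the inductive \nameref{lemma:DILT} instance $h(l,l-1,s+1)\geq h(l+1,l-1,s)$. The divergence, and the genuine gap, is in the second term. The paper closes $h(l-1,l,s+1)\geq h(l+1,l,s-1)$ in a single stroke with the \nameref{lemma:DTP} lemma applied to $(l+1,l,s-1)$ (legal because $l\geq s+1$), which is exactly why DTP is carried in the joint induction scheme (step V uses DTP$(N)$). You instead chain \nameref{lemma:DITT} at a smaller total with a new ``tied-max-to-min'' invariant $h(l-1,l,s+1)\geq h(l,l,s)$, which is not one of the paper's lemmas, and you do not prove it: you explicitly call it the main obstacle, reduce it to the distance-two transfer $h(l-2,l,s+1)\geq h(l,l,s-1)$, observe (correctly, e.g.\ $h(1,3,1)<h(2,3,0)$) that it cannot be split into two valid single-unit moves, and defer it to an unspecified strengthening of the induction. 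As written, then, the argument is incomplete; the missing ingredient is precisely the two-for-two exchange encapsulated by \nameref{lemma:DTP}, which makes your extra invariant unnecessary.

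Two smaller points: your boundary case $s=0$ is handled in the paper not just via the $K=2$ results and \Cref{lemma:wald}, but by a specific combination of \nameref{lemma:DILT} (applied to $(l-1,l,0)$) and \nameref{lemma:OHOB} for the term $h(l+1,l,0)-1\leq h(l,l-1,1)$; the sketch ``dispatched via $K=2$ and $h\leq 2\min$'' does not obviously supply the latter inequality. Also note that your proposed invariant, while plausibly true (it is an instance of \Cref{idea:max-to-min}), would itself need to join the mutual induction with its own closure argument, which is exactly the difficulty the paper sidesteps by formulating DTP instead.
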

		
	\item \emph{Dual Transfer to the Poor} (DTP).
		Intuitively, if the color with the fewest goodies has way fewer than the other two, then it is convenient to exchange 1 goodie from each of the top 2 colors for 2 goodies of the bottom color.
	
	 \begin{lemma}[DTP]
	Let $l \geq m \geq s \geq 0$ be integers, such that $m \geq s+2$. Then
	\[
		h(l, m, s) \leq h(l - 1, m - 1, s+2).
	\]
		\label{lemma:DTP}
	\end{lemma}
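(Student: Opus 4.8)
The plan is to establish DTP by strong induction on the total $N = l + m + s$, run \emph{simultaneously} with \Cref{lemma:DILT} and \Cref{lemma:DITT}; the three statements reinforce one another, so each inductive step may invoke any of them at a strictly smaller total. The driving tool is the recursion \Cref{eq:general-recursive}, together with the fact that $h$ is symmetric in its arguments.

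Assume first that $s \geq 1$, so that all three coordinates on both sides are positive and the support has size $3$. Expanding one step gives
\[
h(l,m,s) = 1 + \tfrac13\big(h(l-1,m,s) + h(l,m-1,s) + h(l,m,s-1)\big)
\]
and
\[
h(l-1,m-1,s+2) = 1 + \tfrac13\big(h(l-2,m-1,s+2) + h(l-1,m-2,s+2) + h(l-1,m-1,s+1)\big).
\]
I would then try to match the three subtracted terms to the three added terms so that every pair is itself an instance of DTP at total $N-1$: concretely $(l,m,s-1)\mapsto(l-1,m-1,s+1)$, $(l-1,m,s)\mapsto(l-2,m-1,s+2)$, and $(l,m-1,s)\mapsto(l-1,m-2,s+2)$. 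Checking the hypotheses, the first pairing is always legitimate (its base has gap $m \ge (s-1)+2$), the second requires $l > m$ so that $(l-1,m,s)$ stays sorted, and the third requires $m \ge s+3$ so that the base $(l,m-1,s)$ still has gap at least $2$. Hence the \emph{interior} case $l > m$ and $m \ge s+3$ closes immediately by the induction hypothesis applied term-by-term.

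This leaves three boundary regimes to dispatch. When the gap is exactly two, $m = s+2$, the point is that $A := (l,m,s)$ and $B := (l-1,m-1,s+2)$ actually differ by a \emph{single} max-to-min transfer: as multisets $B = \{\,l-1,\,s+2,\,s+1\,\}$ is obtained from $A = \{\,l,\,s+2,\,s\,\}$ by moving one unit off the largest coordinate onto the smallest, holding the middle coordinate $s+2$ fixed. Such a transfer is exactly what \Cref{lemma:DILT} certifies when $l \ge s+4$ and what \Cref{lemma:DITT} certifies when $l = s+3$ (the transfer then lands on a twin top), so the only residual point is the fully degenerate $l = m = s+2$, which I would unfold once more by \Cref{eq:general-recursive} and reduce to a DTP instance at $N-1$ together with an exact symmetry-induced equality. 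When $l = m$ (twin top) but $m \ge s+3$, I would instead use the symmetric form of the recursion, $h(m,m,s) = 1 + \tfrac13(2h(m-1,m,s) + h(m,m,s-1))$, and pair the doubled term against $2h(m-2,m-1,s+2)$: this is again DTP on the sorted base $(m,m-1,s)$, whose gap $m-1 \ge s+2$ is precisely what the case hypothesis provides, while the remaining term reduces exactly as the first pairing above.

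The genuinely delicate regime --- and the step I expect to be the main obstacle --- is $s = 0$, where $A = (l,m,0)$ collapses to the two-pile quantity $h(l,m)$ and the recursion on $A$ no longer produces three legitimate $3$-pile children. Here the term-by-term induction is unavailable, and I would instead combine the $K=2$ bound furnished by \Cref{lemma:wald}, namely $h(l,m) \le 2\min(l,m) = 2m$, with a direct lower bound on $h(l-1,m-1,2)$ obtained by unfolding the balanced configuration $B$ down to the moment its smallest pile empties. Making these two estimates meet --- and, more broadly, verifying that the joint induction for DILT, DITT and DTP stays self-consistent so that every boundary sub-case falls to one of them at a smaller total (the ``transfer off a tied top'' being the recurring awkward pattern) --- is where the real work lies; the small totals $N$ serve as the base cases and can be checked directly from \Cref{eq:general-recursive}.
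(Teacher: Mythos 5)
Your treatment of the case $s \geq 1$ essentially reproduces the paper's argument: the same three child-pairings $(l,m,s-1)\mapsto(l-1,m-1,s+1)$, $(l-1,m,s)\mapsto(l-2,m-1,s+2)$, $(l,m-1,s)\mapsto(l-1,m-2,s+2)$, and the same two boundary regimes $l=m$ and $m=s+2$, resolved via the inductive hypothesis, symmetry of $h$, and \nameref{lemma:DILT}/\nameref{lemma:DITT}. One bookkeeping caveat: your $m=s+2$ shortcut applies DILT (or DITT when $l=s+3$) to the \emph{full} configurations, so the invoked instance has the same total $N$, not a strictly smaller one as your stated induction allows; this is repairable by proving DILT$(N)$ and DITT$(N)$ before DTP$(N)$ at each level (their proofs only need level $N-1$, so there is no circularity), but it contradicts the induction structure you announced and would have to be fixed explicitly. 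The paper avoids the issue by applying DILT only to children, at total $N-1$.

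The genuine gap is the case $s=0$, and the specific plan you propose there cannot work. You want to sandwich: $h(l,m,0)\leq 2m$ from \Cref{lemma:wald}, and then a lower bound on $h(l-1,m-1,2)$ "meeting" it, i.e.\ $h(l-1,m-1,2)\geq 2m$. But that lower bound is false in general: already $h(2,2,0)=5/2$ while $h(1,1,2)=8/3<4=2\min(l,m)$, and for any $l=m$ it is impossible, since the process always leaves at least one goodie unclaimed, so $h(m-1,m-1,2)\leq (2m)-1<2m$. The Wald bound simply has too much slack to be compared against $h(l-1,m-1,2)$ by magnitude alone. The paper closes $s=0$ with a dedicated comparison lemma, \nameref{lemma:CTT} ($h(l,m,0)\leq h(l-1,m-1,2)$), which is itself proved by a separate induction resting on \nameref{lemma:OHOB} and \nameref{lemma:THTB} (Wald enters only in the base case $h(l,1,0)\leq 2$ of THTB). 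Some substitute for that chain of auxiliary lemmas is needed; without it, your proof of DTP does not go through.
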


	\item \emph{Create Third Tower} (CTT). The particular case of the previous lemma when $s = 0$, that requires some special consideration.
	
	\begin{lemma}[CTT]
				Let $l \geq m \geq 2$ be integers. Then
				\[
					h(l, m, 0) \leq h(l-1,m-1, 2).
				\]
				\label{lemma:CTT}
	\end{lemma}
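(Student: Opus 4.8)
The plan is to fold CTT into the simultaneous strong induction on the total $N = l + m$ that also yields \Cref{lemma:DILT,lemma:DITT,lemma:DTP}, treating CTT as the $s=0$ layer of that induction. Although CTT is formally the instance $s=0$ of \Cref{lemma:DTP}, it cannot simply inherit that lemma's proof: the generic DTP argument unrolls the left-hand side $h(l,m,s)$ as a symmetric three-way branching via \Cref{eq:general-recursive}, and this is exactly what breaks when $s=0$, since then the third stock is empty and $h(l,m,0)$ evolves as a two-color process whose support has size $2$. This structural asymmetry---two branches on the left, three on the right---is the whole difficulty, so CTT genuinely needs its own treatment. The base case $l=m=2$ is checked by hand: $h(2,2,0)=\tfrac52 \le \tfrac83 = h(1,1,2)$.

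For the inductive step I would unroll both sides one step. On the left the support has size $2$, so \Cref{eq:general-recursive} reduces to the two-color recursion and gives $h(l,m,0) = 1 + \tfrac12\big(h(l-1,m,0)+h(l,m-1,0)\big)$, whereas on the right the support is full and \Cref{eq:general-recursive} gives $h(l-1,m-1,2) = 1 + \tfrac13\big(h(l-2,m-1,2)+h(l-1,m-2,2)+h(l-1,m-1,1)\big)$. The $+1$ terms cancel, and after clearing denominators the target becomes
\[
3\big(h(l-1,m,0)+h(l,m-1,0)\big)\ \le\ 2\big(h(l-2,m-1,2)+h(l-1,m-2,2)+h(l-1,m-1,1)\big).
\]
I would then bound the two left reductions asymmetrically: route the balanced one $h(l-1,m,0)$ through the inductive hypothesis of CTT (pairing it with $h(l-2,m-1,2)$), but control the unbalanced one $h(l,m-1,0)$---together with the boundary subcase $m=2$, where $h(l,m-1,0)=h(l,1)$ falls outside CTT's hypotheses---through the $K=2$ tools, namely the monotonicity \Cref{lemma:add-to-min-K2} and the Wald bound $h(a,b)\le 2\min(a,b)$ from \Cref{lemma:wald}. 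The genuinely three-color term $h(l-1,m-1,1)$ on the right would be lower-bounded using the inductive forms of \Cref{lemma:DTP,lemma:DILT,lemma:DITT}, unrolling once more where it helps.

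The hard part will be reconciling the $\tfrac12$ versus $\tfrac13$ branching with just enough tightness. A naive termwise induction---applying CTT's hypothesis to \emph{both} left reductions---loses too much: it would demand $h(2,2,2)\le h(1,2,3)$, which is false, since the balanced configuration has strictly more happy attendees. Nor can one connect $(l,m,0)$ to $(l-1,m-1,2)$ by a single chain of monotone one-unit moves: the step $(l-1,m,1)\to(l-1,m-1,2)$ moves a unit from the medium tower to the minimum, which is exactly the direction that can \emph{decrease} $h$ (cf. the paper's $h(1,3,5) > h(2,2,5)$) and indeed fails once $l$ is large relative to $m$. Consequently the argument must interleave all four lemmas and split on the relative sizes of $l$ and $m$---in particular whether $l=m$, $l=m+1$, or $l\ge m+2$, since these govern whether a move creates twin towers and hence whether \Cref{lemma:DILT} or \Cref{lemma:DITT} applies. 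I expect this case analysis, together with the bookkeeping needed to keep the Wald slack under the $\tfrac13$ weight, to be the main obstacle.
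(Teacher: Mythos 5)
Your setup matches the paper's: induction on $l+m$, the same base case, unrolling both sides, and handling the $m=2$ boundary via \Cref{lemma:add-to-min-K2}. Indeed, the paper's device of padding the two-color recursion with the virtual term $\tfrac{h(l,m,0)-1}{3}$ is, after clearing denominators, exactly your target $3\bigl(h(l-1,m,0)+h(l,m-1,0)\bigr)\le 2\bigl(h(l-2,m-1,2)+h(l-1,m-2,2)+h(l-1,m-1,1)\bigr)$. But your proposal is missing the ingredient that closes it. Writing $A=h(l-1,m,0)$, $B=h(l,m-1,0)$, $C,D$ for the first two right-hand terms and $E=h(l-1,m-1,1)$, the inductive hypothesis (plus \Cref{lemma:add-to-min-K2} when $m=2$) gives $A\le C$ and $B\le D$, and the remaining deficit is precisely $A+B\le 2E$, which—since $A+B=2\bigl(h(l,m,0)-1\bigr)$—is literally the \nameref{lemma:OHOB} lemma at $(l,m)$. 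That is how the paper finishes (its scheme III feeds OHOB$(N)$, itself resting on \nameref{lemma:THTB} and \Cref{lemma:wald}, into CTT$(N+1)$), and your sketch neither invokes nor reproves it.

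The substitutes you name do not supply this step. A termwise lower bound on $E$ fails: $h(l-1,m-1,1)\ge h(l-1,m,0)$ is false in general (e.g.\ $h(5,1,1)\approx 3.38$ while $h(5,2,0)=h(5,2)\approx 3.72$), and \nameref{lemma:DITT} only rescues the balanced case $l=m$, where $E\ge h(l,l-1,0)$; \nameref{lemma:DILT} gives $E\ge B$ but nothing toward covering $A$; and the Wald bound $B\le 2(m-1)$ has no counterpart lower bound on $E$ of matching strength in the paper (the $K=3$ lower bounds of \Cref{sec:asymptotic} are far too weak). What is genuinely needed is the additive ``one unit of value per missing goodie'' accounting of THTB/OHOB, and without it the $\tfrac12$-versus-$\tfrac13$ mismatch you correctly identify as the crux remains open. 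Two smaller points: your claim that the naive termwise induction would demand $h(2,2,2)\le h(1,2,3)$ is off—applying the CTT hypothesis to both left branches is exactly what the paper does and is unproblematic; the obstruction is only the weight mismatch. And the paper's proof of CTT does not interleave \nameref{lemma:DILT}, \nameref{lemma:DITT} or \nameref{lemma:DTP} at all; it needs only the CTT hypothesis, \Cref{lemma:add-to-min-K2}, and \nameref{lemma:OHOB}.
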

	
	\item \emph{Two Birds in Hand are worth Two in the Bush} (THTB). Intuitively, it's better to directly gain $2$ units of the value rather than $2$ goodies.	

\begin{lemma}[THTB]
		Let $l \geq m > 0$ be integers. Then,
		\[
			h(l, m, 0) \leq h(l-1, m-1, 0) + 2.	
		\]	
		\label{lemma:THTB}
	\end{lemma}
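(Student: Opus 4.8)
The plan is to prove THTB by induction on $N = l + m$, using the recursive equation~\eqref{eq:general-recursive} to peel off one round of the process. Writing out $h(l, m, 0)$ with support of size $2$ (since the third stock is $0$), we have $h(l, m, 0) = 1 + \tfrac{1}{2} h(l-1, m, 0) + \tfrac{1}{2} h(l, m-1, 0)$. Similarly $h(l-1, m-1, 0) = 1 + \tfrac{1}{2} h(l-2, m-1, 0) + \tfrac{1}{2} h(l-1, m-2, 0)$. So the inequality $h(l, m, 0) \leq h(l-1, m-1, 0) + 2$ reduces, after cancelling the $+1$ on each side, to showing
\[
\tfrac{1}{2} h(l-1, m, 0) + \tfrac{1}{2} h(l, m-1, 0) \leq \tfrac{1}{2} h(l-2, m-1, 0) + \tfrac{1}{2} h(l-1, m-2, 0) + 2,
\]
which is exactly the inductive hypothesis applied termwise to $(l-1, m, 0)$ and to $(l, m-1, 0)$, each contributing a $+2$ and then averaged down to a single $+2$. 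So the interior of the induction is essentially immediate.

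The real work is in the base cases, and I expect this to be the main obstacle. Note that the statement is an upper bound of the form ``gaining $2$ units of value is at least as good as keeping $2$ extra goodies,'' so it must degrade gracefully when one of $l, m$ is small. First I would handle $m = 1$: here $h(l, 1, 0)$ is the two-pile process with stocks $l$ and $1$, and the right-hand side involves $h(l-1, 0, 0) = 0$ by the base case~\eqref{eq:base-case}, so the claim becomes $h(l, 1, 0) \leq 2$. This follows directly from the $K=2$ consequence of Wald's equation noted after~\Cref{lemma:wald}, namely $h(n_1, n_2) \leq 2 \min(n_1, n_2)$, giving $h(l, 1, 0) \leq 2 \cdot 1 = 2$. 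When $l = m = 1$ one checks $h(1,1,0) = 1 \leq 0 + 2$ directly, and the case $l = m$ in general can be folded into the induction symmetrically.

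The one subtlety to watch is whether the recursive unfolding ever drops a stock to zero prematurely, changing the support size mid-argument: when $m = 1$, the term $h(l, m-1, 0) = h(l, 0, 0) = 0$, so the symmetric peeling I described above is not valid there and must be replaced by the direct Wald bound $h(l, 1, 0) \leq 2$. I would therefore structure the proof as: (i) base case $m = 1$ via Wald; (ii) inductive step for $m \geq 2$ via the termwise averaging above, where all four unfolded terms have the form $h(\cdot, \cdot, 0)$ with both first two coordinates positive, so the inductive hypothesis applies cleanly. The elegance here is that THTB is in some sense the ``$K=2$ shadow'' of the more delicate three-tower lemmas (DTP, CTT), and its proof is correspondingly lightweight, relying only on the two-pile recursion and the Wald estimate rather than on any multiplicity-tracking. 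I expect the verification that the inductive hypothesis genuinely applies to both unfolded pairs --- i.e.\ that neither $(l-1, m)$ nor $(l, m-1)$ violates $l - 1 \geq m > 0$ or its symmetric counterpart --- to require a brief but careful case split on whether $l = m$, and this bookkeeping is the only place where I anticipate needing to be meticulous.
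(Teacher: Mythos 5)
Your proposal is correct and follows essentially the same route as the paper's proof: induction on $l+m$, the case $m=1$ handled by the Wald-type bound $h(l,1,0)\leq 2$, and the inductive step by unfolding the two-pile recursion once and applying the hypothesis termwise to $(l-1,m,0)$ and $(l,m-1,0)$ (with the $l=m$ wrinkle absorbed by the symmetry of $h$). Nothing further is needed.
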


   \item \emph{One Bird in Hand is worth One in the Bush}  (OHOB). Intuitively, it's better to directly gain $1$ units of objective value rather than $1$ extra goodie.	
 \begin{lemma}[OHOB]
		Let $l \geq m > 0$ be integers. Then,
		\[
			h(l, m, 0) \leq h(l-1, m-1, 1) + 1.
		\]
	
		\label{lemma:OHOB}
	\end{lemma}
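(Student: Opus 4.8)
The plan is to deduce OHOB from \Cref{lemma:THTB} (THTB) together with a single auxiliary monotonicity fact, rather than by expanding the recursion directly. Let me first record why the direct route is doomed, since this motivates the indirect one. For $m\ge 2$ one can expand both sides of OHOB through \Cref{eq:general-recursive} and \Cref{eq:recursive} and then bound the two two-type terms $h(l-1,m,0)$ and $h(l,m-1,0)$ by the inductive hypothesis of OHOB itself; this reduces the claim to
\[
h(l-2,m-1,1)+h(l-1,m-2,1)\;\le\;2\,h(l-1,m-1,0),
\]
which is \emph{false} already at $(l,m)=(3,2)$, where the left side equals $3.5$ and the right side equals $3$. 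The bounding discards too much slack, so instead I will lower-bound the right-hand side $h(l-1,m-1,1)$ of OHOB more carefully, comparing it to $h(l-1,m-1,0)$ rather than re-expanding it.

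The key is the auxiliary claim that \emph{creating a brand-new nonempty pile buys at least one happy attendee}:
\[
h(a,b,1)\;\ge\;h(a,b,0)+1 \qquad \text{for all integers } a,b\ge 1.
\]
Granting this, OHOB for $m\ge 2$ follows in two lines. Applying the claim with $a=l-1$ and $b=m-1$ (both $\ge 1$ since $l\ge m\ge 2$) and then THTB gives
\[
h(l-1,m-1,1)\;\ge\;h(l-1,m-1,0)+1\;\ge\;\big(h(l,m,0)-2\big)+1\;=\;h(l,m,0)-1,
\]
which is exactly the desired inequality. The remaining case $m=1$ is a genuine base case: by symmetry of the process $h(l-1,0,1)=h(l-1,1,0)$, so OHOB reduces to $h(l,1,0)\le h(l-1,1,0)+1$, and the two-type recursion yields $h(l,1,0)=1+\tfrac12 h(l-1,1,0)$, whence $h(l,1,0)-h(l-1,1,0)=1-\tfrac12 h(l-1,1,0)\le 1$ because $h(l-1,1,0)\ge 0$.

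It remains to prove the auxiliary claim, which I would establish by a self-contained induction on $a+b$, independent of the six main lemmas and hence introducing no circularity. For $a,b\ge 2$, expanding $h(a,b,1)$ with the three-type recursion and $h(a,b,0)$ with the two-type recursion, and then invoking the inductive hypothesis on the two smaller three-type terms $h(a-1,b,1)$ and $h(a,b-1,1)$, the algebra telescopes: the $-\tfrac16\big(h(a-1,b,0)+h(a,b-1,0)\big)$ produced by the induction cancels exactly against the same expression appearing in $\tfrac13 h(a,b,0)=\tfrac13+\tfrac16\big(h(a-1,b,0)+h(a,b-1,0)\big)$, leaving precisely $h(a,b,1)-h(a,b,0)\ge \tfrac13+\tfrac23=1$. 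The edge cases $a=1$ or $b=1$ use symmetry plus the two-type increment bound $h(a,1,0)-h(a-1,1,0)\le 1$ noted above, and $a=b=1$ is checked directly ($h(1,1,1)=2=h(1,1,0)+1$).

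The main obstacle here is conceptual rather than computational: seeing that one must bound $h(l-1,m-1,1)$ from below through the ``new pile'' lemma instead of grinding the OHOB recursion. The delicacy is that the gain-at-least-one phenomenon is special to \emph{creating} an empty coordinate; adding a unit to an already nonempty pile can raise $h$ by strictly less than $1$ (for instance $h(2,1,1)-h(1,1,1)=\tfrac23$), so the auxiliary statement must be restricted to empty coordinates, and the induction must only ever apply the hypothesis to configurations whose third coordinate is itself being created. A secondary point to verify is that, within the global induction on $N$, \Cref{lemma:THTB} is available at the same value $N=l+m$ as OHOB; since THTB precedes OHOB in the induction order and its own proof does not invoke OHOB, this use is legitimate.
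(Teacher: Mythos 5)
Your proof is correct, and it takes a genuinely different route from the paper's. The paper proves \Cref{lemma:OHOB} by induction on $l+m$ directly: it rewrites $h(l,m,0)=1+\tfrac13 h(l-1,m,0)+\tfrac13 h(l,m-1,0)+\tfrac13\bigl(h(l,m,0)-1\bigr)$, using that the two-type average equals $h(l,m,0)-1$, then applies the OHOB inductive hypothesis to the first two terms and \nameref{lemma:THTB} to the third, and the three pieces reassemble into the three-type recursion for $h(l-1,m-1,1)$. You instead isolate a standalone monotonicity lemma --- creating a brand-new singleton pile gains at least one unit, $h(a,b,1)\ge h(a,b,0)+1$ for $a,b\ge 1$ --- prove it by its own self-contained induction on $a+b$ (your telescoping is right: the induction yields exactly $h(a,b,1)-h(a,b,0)\ge 1$ for $a,b\ge 2$, the edge case $b=1$ reduces to $h(a,1,0)-h(a-1,1,0)=1-\tfrac12 h(a-1,1,0)\le 1$, and $h(1,1,1)=2=h(1,1,0)+1$), and then obtain OHOB in one line from this lemma plus THTB, with no induction on OHOB itself. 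Your use of THTB at the same $N$ is legitimate for the reason you give: its proof rests only on \Cref{lemma:wald}, never on OHOB. What your route buys is a reusable quantitative fact about creating a third pile and a simpler dependency structure (implication II of the paper's scheme no longer needs OHOB$(N)$ as a hypothesis); what the paper's route buys is compactness, since a single induction does all the work. One small caveat: your opening claim that the ``direct route is doomed'' only rules out the naive symmetric bounding you describe (which indeed fails at $(l,m)=(3,2)$); the paper's argument is also a direct expansion, it simply splits with denominator three and spends THTB on the self-referential term, so directness itself is not the obstacle.
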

   
	\end{enumerate}

All these lemmas will be proved by induction over $N$ the sum of the arguments of $h$. However, their proofs have mutual dependencies as we summarize next. Let us use notation, e.g., DITT$(N)$, to refer to the proposition stating that the~\nameref{lemma:DITT} lemma holds for the sum of $h$'s parameters (on the LHS of the lemma) being at most $N$. Then our induction scheme is as follows.

\begin{enumerate}[I)]
	\item \Cref{lemma:wald} $\land$ THTB$(N)$ $\quad \implies$ THTB$(N+1)$.
	\item OHOB$(N)$ $\land$ THTB$(N)$ $\quad \implies$ OHOB$(N+1)$.
	\item CTT$(N)$ $\land$~\Cref{lemma:add-to-min-K2} $\land$ OHOB$(N)$  $\quad \implies$ CTT$(N+1)$.
	\item DTP$(N)$ $\land$ CTT$(N+1)$ $\land$ DITT$(N)$ $\land$ DILT$(N)$ $\quad \implies$ DTP$(N+1)$.
	\item DITT$(N)$ $\land$ DILT$(N)$ $\land$ OHOB$(N)$ $\land$ DTP$(N)$ $\quad \implies$ DITT$(N+1)$. 
	\item DILT$(N)$ $\land$~\Cref{lemma:add-to-min-K2} $\land$ DITT$(N)$ $\quad \implies$ DILT$(N+1)$.
\end{enumerate}

It is easy to check that this scheme is sound, meaning that if we prove all the implication I) through VI), then all lemmas hold for any value of $N$. Let us now show that this is enough to obtain the conjecture for $K=3$.\\

\begin{theorem}
		~\Cref{conjecture:main} holds for $K=3$.
  \label{thm:K-3}
\end{theorem}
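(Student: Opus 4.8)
The plan is to mimic the structure of the $K=2$ proof, but using the full suite of inductive lemmas (DILT, DITT, DTP, CTT) to perform the spread-reduction argument in the richer three-dimensional setting. As in the $K=2$ case, I would take an optimal solution $S^\star = (n_1^\star, n_2^\star, n_3^\star)$ whose spread $\sigma(S^\star)$ is minimum among all optimal solutions, write $l = \max$, $m = \text{median}$, $s = \min$, and assume for contradiction that $\sigma(S^\star) = l - s \geq 2$. The goal is to exhibit another optimal solution of strictly smaller spread, contradicting minimality. The content of~\Cref{idea:max-to-min} is precisely that moving a unit from the largest pile to the smallest pile does not decrease $h$; the inductive lemmas are exactly the tools that certify this is value-preserving (i.e., $h$ does not go down) in each of the relevant configurations.

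First I would dispose of the degenerate cases. If $s = 0$, then we are effectively in a $K=2$ (or fewer) instance, and the $K=2$ theorem together with~\Cref{lemma:CTT} (which compares $h(l,m,0)$ against $h(l-1,m-1,2)$, creating a genuine third pile) should let me conclude; CTT is the tool that shows introducing a third color from a two-tower configuration is beneficial, breaking out of the $s=0$ boundary. Assuming $s \geq 1$, the main step is a case analysis on the multiplicities of the extreme values, since the earlier discussion in~\Cref{sec:k-2} identified multiplicities as the obstruction to a naive argument. The natural transfer to try is moving one unit from the $l$-pile to the $s$-pile, producing $(l-1, m, s+1)$, and I want $h(l-1,m,s+1) \geq h(l,m,s)$. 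Depending on whether the value $l$ is attained once or twice (twin towers), and whether $m$ coincides with $l$ or with $s$, I invoke the appropriate lemma: \nameref{lemma:DILT} handles the case of a unique maximum (the ``lonely tower''), while \nameref{lemma:DITT} handles the case where two piles tie for the maximum. When the minimum pile is far below the other two, \nameref{lemma:DTP} lets me move a unit from \emph{each} of the top two piles into the bottom one, $(l,m,s) \to (l-1,m-1,s+2)$, which both raises the objective and reduces the spread by two in a single move.

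The main obstacle I anticipate is organizing the case analysis so that every reachable configuration $(l,m,s)$ with $l - s \geq 2$ admits a spread-reducing, value-nondecreasing transfer certified by one of the available lemmas, and that the transferred instance still has nonnegative entries and the claimed ordering. Concretely, I must verify that DILT, DITT, DTP, and CTT jointly cover all multiplicity patterns: unique max vs. tied max, and whether the median sits strictly between $s$ and $l$ or equals one of them. The lemmas as stated carry hypotheses like $l > m \geq s$ (DILT), $l > s$ with ties $l,l,s$ (DITT), and $m \geq s+2$ (DTP), so the delicate bookkeeping is checking that these hypotheses are met in each case and that no configuration with $\sigma \geq 2$ slips through the cracks; in particular the boundary between the DTP regime ($m \geq s+2$) and the single-step DILT/DITT regime ($m \leq s+1$) must be seamless. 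Once the covering is complete, the contradiction is immediate exactly as in $K=2$: the transferred solution is optimal (since $h$ did not decrease, and $S^\star$ was optimal, so $h$ in fact stays equal) yet has strictly smaller spread, contradicting the minimality of $\sigma(S^\star)$ and forcing $\sigma(S^\star) \leq 1$, which is the statement of~\Cref{conjecture:main} for $K=3$.
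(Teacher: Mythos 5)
Your proposal follows essentially the same route as the paper's proof: take a minimum-spread optimal solution, assume $\sigma(S^\star) \geq 2$, and invoke DILT (when $l \geq m+2$), DTP (when $m \geq s+2$), and DITT (for the residual case $(s+2,s+1,s)$) to produce an equally good solution of strictly smaller spread, contradicting minimality. One small caution: your dichotomy ``unique max $\Rightarrow$ DILT, tied max $\Rightarrow$ DITT'' mislabels the residual case $(s+2,s+1,s)$, where the maximum is unique yet DILT and DTP both fail their hypotheses and it is DITT, comparing against $(s+1,s+1,s+1)$, that closes the argument exactly as in the paper; also no separate $s=0$ case is needed, since the lemmas are stated for non-negative entries.
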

\begin{proof}
Let $S^\star = n^\star_1, n^\star_2, n^\star_3$ be an optimal solution such that $\sigma(S^\star) > 1$, and, and assume expecting a contradiction that $S^\star$ is an optimal solution that minimizes the value of $\sigma(S^\star)$.
	 Without loss of generality let us assume 
	\[
	 	l := n^\star_1 \geq m := n^\star_2 \geq s := n^\star_3.
	\]
	Now we proceed by cases. If $l > m + 1$, then $(l-1, m, s)$ satisfies the conditions for the~\nameref{lemma:DILT} lemma, from where 
	\[
		h(l-1, m, s+1) \geq h(l, m, s),
	\]
	and as $\sigma(l-1, m, s+1) = l-s-2 < \sigma(S^\star) = l-s$, we reach a  contradiction. Thus we can safely assume $l \leq m+1$ from now on. If $m \geq s + 2$, then we can apply the~\nameref{lemma:DTP} lemma to obtain a solution $S^{\dagger}$, with $h(S^{\dagger}) \geq h(S^\star)$ and such that $\sigma(S^\dagger) < \sigma(S^\star)$, which is a contradiction again.
	Thus we can safely assume $m < s+2$, and equivalently $s \geq m-1$. If $l = m$, then $l-s = m - s \leq 1$, contradicting $\sigma(S^\star) > 1$. We can thus assume that $l = m+1$. Once again, if $s = m$ the assumption $\sigma(S^\star)$ is contradicted, from where the last case is $s = m-1$, and thus $S^\star = (s+2, s+1, s)$, from where we can obtain another optimal solution with smaller spread by noting
	\[
		h(s+1, s+1, s+1) \geq h(s+2, s+1, s),
	\]
	which is a direct consequence of the~\nameref{lemma:DITT} lemma.
\end{proof}

We now proceed to prove the lemmas by establishing the implications I) through VI).

	\begin{proof}[Proof of~\nameref{lemma:THTB}]
		This statement can be proved by induction over $l+m$. The base case being $l=m=1$, for which the LHS is $1$ and thus the inequality trivially holds. Consider now the case where $m = 1$. In this case we need to prove that $h(l, 1, 0) \leq 2$, which follows from~\Cref{lemma:wald}.
			For $m > 1$ we do the following simple calculation:
		\begin{align*}
			h(l, m, 0) &= 1 + \frac{h(l-1, m, 0)}{2} + \frac{h(l, m-1, 0)}{2}\\
			&\leq 1 + \frac{h(l-2, m-1, 0)+2}{2} + \frac{h(l-1, m-2, 0)+2}{2}\\
			&= h(l-1, m-1, 0) +2.
		\end{align*}
	\end{proof}

	\begin{proof}[Proof of~\nameref{lemma:OHOB}]
		Let us proceed inductively over $l+m$. The base case is $h(1,1,0) \leq h(0,0,1)+1$, which is true by definition.
		Consider first the case where $m = 1$. Then we are trying to prove
		\[
			h(l, 1) \leq h(l-1, 1) + 1,
		\]
		which holds since $h(l, 1) = 1 + h(l-1, 1)/2$.
		Therefore we now assume $m > 1$, and we conclude by the following sequence of calculations:
		
		\begin{align*}
			h(l, m, 0) &= 1 + \frac{h(l-1, m, 0)}{2} + \frac{h(l, m-1, 0)}{2}\\
			&=1 + \frac{h(l-1, m, 0)}{3} + \frac{h(l, m-1, 0)}{3} + \frac{\frac{h(l-1, m) + h(l, m-1)}{2}}{3}\\
			&\leq_{\text{(I. H)}} 1 + \frac{h(l-2, m-1,1)+1}{3}+\frac{h(l-1, m-2,1)+1}{3}+\frac{\frac{h(l-1, m) + h(l, m-1)}{2}}{3}\\
			&= 1 + \frac{h(l-2, m-1,1)+1}{3}+\frac{h(l-1, m-2,1)+1}{3}+\frac{h(l, m, 0) - 1}{3}\\
			&\leq 1 + \frac{h(l-2, m-1,1)+1}{3}+\frac{h(l-1, m-2,1)+1}{3}+\frac{h(l, m, 0) - 1}{3}\\
			&\leq  1 + \frac{h(l-2, m-1,1)+1}{3}+\frac{h(l-1, m-2,1)+1}{3}+\frac{h(l-1, m-1, 0)+1}{3}\tag{by~\Cref{lemma:THTB}}\\
			&= 1 + h(l-1, m-1, 1).
		\end{align*}
	\end{proof}

\begin{proof}[Proof of~\nameref{lemma:CTT}]
We proceed once again by induction over $l+m$. The base case being $h(2,2,0) \leq h(1,1,2)$, which can be easily checked. Note as well that now that the base case has been proven, we can safely assume $l > 2$.  For the inductive case, consider first that,

\[
	h(l, m,0) = 1 + \frac{h(l-1, m, 0)}{3} + \frac{h(l, m-1, 0)}{3} + \frac{h(l, m, 0) - 1}{3},
\]
and that regardless of whether $l-1 \geq m$ or not, we can apply our inductive hypothesis to the first term, obtaining that
\[
	h(l, m, 0) \leq 1 + \frac{h(l-2, m-1, 2)}{3} + \frac{h(l, m-1, 0)}{3} + \frac{h(l, m, 0) - 1}{3}.
\]	
Now, if $m = 2$, then $h(l, m-1, 0) = h(l, 1, 0) \leq h(l-1, 0, 2) = h(l-1, m-2, 2)$, where the inequality follows from~\Cref{lemma:add-to-min-K2}. If, on the other hand, $m > 2$, we can safely apply our inductive hypothesis to $h(l, m-1, 0)$, thus obtaining

\[
	h(l, m, 0) \leq 1 + \frac{h(l-2, m-1, 2)}{3} + \frac{h(l-1, m-2, 2)}{3} + \frac{h(l, m, 0) - 1}{3},
\]

from where it only remains to prove that
\[
	\frac{h(l, m, 0) - 1}{3} \leq \frac{h(l-1, m-1, 1)}{3},
\]
which is an immediate consequence of~\nameref{lemma:OHOB}.
\end{proof}

\begin{proof}[Proof of~\nameref{lemma:DTP}]
	If $s = 0$, then the~\nameref{lemma:CTT} lemma is enough, so we safely assume $s \geq 1$ from now on.
	By induction again on $l+m+s$. Base case is trivial. For the inductive case, let us first  prove
	\[
	h(l-1, m, s) \leq h(l-2, m-1, s+2).
	\]
	This follows by induction taking $l' = l-1$ as long as $l-1 \geq m$. We thus need to be careful about the case $l=m$. In this case, however, we are trying to prove
	\[
		h(m-1, m, s) = h(m, m-1, s) \leq h(m-1, m-2, s+2), 
	\]
	which follows from the inductive hypothesis as long as $m-1 \geq s+2$. Therefore we only need to be careful about the case where $m = s+2$. However, in said case we would be trying to prove:
	\(
		h(s+2, s+1, s) \leq h(s+1, s, s+2),
	\)
	which trivially holds due to the symmetry of $h$.	
	
	Now let us prove
	\[
		h(l, m-1, s) \leq h(l-1, m-2, s+2).
	\]
	If $m > s+2$, then $m-1 \geq s+2$ and thus we can apply our inductive hypothesis (to $(l-1, m-2, s)$, which necessarily satisfies the hypotheses). If $m=s+2$, then we are trying to prove
	\[
		h(l, s+1, s) \leq h(l-1, s, s+2) = h(l-1, s+2, s),
	\]	
		
	which follows from the~\nameref{lemma:DILT} lemma applied to $(l-1, s+1, s)$ as long as $l-1 > s+1$. If $l-1 = s+1$, then
	\[
		h(l, s+1, s) = h(s+2, s+1, s) = h(l-1, s, s+2) = h(s+1, s, s+2),
	\]
	and thus it remains to check the case $l-1 < s+1$, but this cannot happen as we are assuming $m = s+2$, and $l-1 < s+1$ would imply 
	\(
		s \geq l-1 \geq m = s+2,
	\)
	a contradiction.
	
	Finally,  we need to prove
	
	\[
		h(l, m, s-1) \leq h(l-1, m-1, s+1),
	\]
	which follows directly from the inductive hypothesis considering $s \geq 1$.
\end{proof}

\begin{proof}[Proof of~\nameref{lemma:DITT}]
We proceed once again by induction over $s+2l$. Let us consider first the case where $s = 0$. Here we are trying to prove that 
\[
	h(l, l, 1) \geq h(l+1, l, 0).
\]
Given that $l > s = 0$, the RHS can be rewritten as
\[
	h(l+1, l, 0) = 1 + \frac{h(l, l, 0)}{3} + \frac{h(l+1, l-1, 0)}{3} + \frac{h(l+1, l, 0) - 1}{3},
\]
while the LHS can be rewritten as

\[
	h(l, l, 1) = 1 + \frac{h(l-1, l, 1)}{3} + \frac{h(l, l-1, 1)}{3} + \frac{h(l, l, 0)}{3}.
\]
Now notice that we would be done if we had 
\[
	h(l+1, l-1, 0) \leq h(l-1, l, 1) \quad \text{ and } \quad h(l+1, l, 0) - 1 \leq h(l, l-1, 1).
\]
Fortunately, this is the case, as the~\nameref{lemma:DILT} lemma applied over $(l-1, l, 0)$ gives us the first inequality, whereas the second inequality follows exactly from~\nameref{lemma:OHOB}.
We can thus assume safely that $s > 0$, and so what we want to prove is equivalent to

\begin{align*}
	&h(l-1, l, s+1) + h(l, l-1, s+1) + h(l, l, s)\geq h(l, l, s) + h(l+1, l-1, s) + h(l+1, l, s-1),
\end{align*}

which requires only to prove
\[
	h(l, l-1, s+1) \geq h(l+1, l-1, s) \; \text{and} \; h(l-1, l, s+1) \geq h(l+1, l, s-1),
\]	
where the first inequality follows from~\nameref{lemma:DILT}, and the second from the~\nameref{lemma:DTP} lemma.

\end{proof}

\begin{proof}[Proof of~\nameref{lemma:DILT}]
	Once again the proof is by induction on $l+m+s$.
	If $s = 0$ the first part of the lemma comes directly from~\Cref{lemma:add-to-min-K2}. For the second part, we need to prove that
	\[
		h(l, m, 1) \geq h(l+1, m, 0).
	\]
	If $m = 0$ this is trivial, so we assume $m > 0$ and thus we need to prove that
	\begin{align*}
		&\frac{h(l-1, m, 1)}{3}+\frac{h(l, m-1, 1)}{3}+\frac{h(l, m, 0)}{3}\geq \frac{h(l, m, 0)}{3}+\frac{h(l+1, m-1, 0)}{3}+\frac{h(l+1, m, 0) - 1}{3}.
	\end{align*}
	If $l-1 > m$, then $h(l-1, m, 1) \geq h(l, m, 0)$ by the inductive hypothesis. If $l-1 = m$, then $h(l-1, m, 1) \geq h(l, m, 0)$ follows from~\nameref{lemma:DITT}.
	Using the inductive hypothesis again we have $h(l, m-1, 1) \geq h(l+1, m-1, 0)$.  Finally it remains to show that 
	\[
		h(l, m, 0) \geq h(l+1, m, 0) - 1,
	\]
	which is equivalent to showing that
	\[
		h(l, m, 0) \geq h(l, m, 0)/2 + h(l+1, m-1,0)/2,
	\]
	which must hold based on~\Cref{lemma:add-to-min-K2} and using that $l > m$.
	We can now consider the case where $s > 0$. Let us first show the first part of the lemma, which is equivalent (as $s > 0$) to
	\[
		h(l-1, m+1, s) + h(l, m, s) + h(l, m+1, s-1) \geq h(l, m, s) + h(l+1, m-1, s) + h(l+1, m, s-1).
	\]
	Note that we have $h(l, m+1, s-1) \geq h(l+1, m, s-1)$ by inductive hypothesis over $(l, m, s-1)$, and $
		h(l, m, s) \geq h(l+1, m-1, s)$
	 by the inductive hypothesis applied to $(l, m-1, s)$.
	 It remains to show that 
	\[
		h(l-1, m+1, s) \geq h(l, m, s),
	\]
	which is trivially true if $l = m+1$, and otherwise follows by the inductive hypothesis applied to $(l-1, m, s)$ as $l-1 > m$. 	
	\end{proof}

\section{Asymptotic Bounds and Approximations}\label{sec:asymptotic}
The random process studied (i.e.,~Model~\ref{alg:model-M}) can be seen as a sequence of $K-1$ \emph{first emptying events} that are analogous to $\tau$, as described next. 
Throughout the entire process, there are $K-1$ time steps at which a goodie runs out, which we can denote by $\tau, \tau_2, \tau_3, \ldots, \tau_{K-1}$. Each $\tau_i$ is a random variables depending on the initial assortment $n_1, \ldots, n_K$, and moreover, we have that $\tau_{K-1}(n_1, \ldots, n_K) = h(n_1, \ldots, n_K)$. For an example, imagine an initial assortment $\vec{n}^{(0)} := (300, 300, 300, 300)$, and that at time $\tau = 1000$ the first emptying event occurs, leaving the stocks at $\vec{n}^{(1000)} = (24, 30, 0, 14)$. Then, if the next emptying event is at time $\tau_2 = 1039$, leaving the stocks at $\vec{n}^{(1039)} = (x, y, 0, 0)$, what should we expect $x$ and $y$ to be?
We should expect that $x \approx 24 - \frac{39}{3}$ and $y \approx 30 - \frac{39}{3}$, as the $39$ rounds between $\tau$ and $\tau_2$ should be split roughly equally between goodie types. An even better approximation, given we know the last goodie type emptied at $\tau_2$, would be to consider $x \approx 24 - \frac{25}{2}, y \approx 39 - \frac{25}{2}$, as $14$ of the $39$ rounds must have removed from the last goodie type.
More in general, consider the time elapsed between $\tau_i$ and $\tau_{i+1}$ for some $i$, given there were $K-i$ nonzero values $n^{(\tau_i)}_j$ at time $\tau_i$. Let us define random variable $\Delta_i$ as $\tau_{i+1} - \tau_{i}$. Then, we would expect each of the $k$ type of goodies to decrease roughly as:
\begin{equation}
    n^{(\tau_{i+1})}_j \gets n^{(\tau_i)}_j - \frac{\Delta_i}{K-i}. \label{eq:expected-decrease}
\end{equation}
We can use this idea to approximate the general expected duration of the process $h$ based on approximations for the expectations of $\tau$, as we show next.
Consider we have a function $\gamma$ that approximates $\mathbb{E}[\tau]$, meaning that 
\begin{equation}
    \mathbb{E}\left[\tau\left(n_1^{(0)}, \ldots, n_K^{(0)}\right)\right]
    \approx \gamma\left(n_1^{(0)}, \ldots, n_K^{(0)}\right). \label{eq:approx-tau}
\end{equation}

Then, by linearity of expections we have
\begin{equation}
   h\left(n_1^{(0)}, \ldots, n_K^{(0)}\right) = \mathbb{E}\left[\tau\left(n_1^{(0)}, \ldots, n_K^{(0)}\right)\right] + h\left(n_1^{(\tau)}, \ldots, n_K^{(\tau)}\right),
\end{equation}
from where we will define a recursive approximation $\hat{h}(x_1, \ldots, x_K)$ for $h\left(n_1^{(0)}, \ldots, n_K^{(0)}\right)$. 
Using notation $\gamma := \gamma\left(x_1, \ldots, x_K\right)$, $S := \left|\left\{ i \mid x_i \geq 1 \right\}\right|$, and $a \dotminus b := \max\{0, a - b\}$, we pose:
\begin{align}
    \hat{h}\left(x_1, \ldots, x_K\right) &=  \gamma + \hat{h}\left(x_1 \dotminus \frac{\gamma}{S}, \ldots, x_K  \dotminus \frac{\gamma}{S}\right),\\
    \hat{h}\left(n_1^{(0)}, \ldots, n_K^{(0)}\right) &\approx h\left(n_1^{(0)}, \ldots, n_K^{(0)}\right).
\end{align}

As a result, obtaining better approximations for $\tau$ will result in better approximations for $h$. As illustrated in~\Cref{fig:approx-h}, using the approximation $\gamma(x_1, \ldots, x_K) := K\cdot\min\{x_1, \ldots, x_K\}$, we can see that $\hat{h}$ is a good approximation for $h$.
We devote the rest of this section to argue that the approximation $\gamma(n_1, \ldots, n_K) := K\cdot\min_c n_c$ is asymptotically correct, by using Chernoff-Hoeffding bounds.
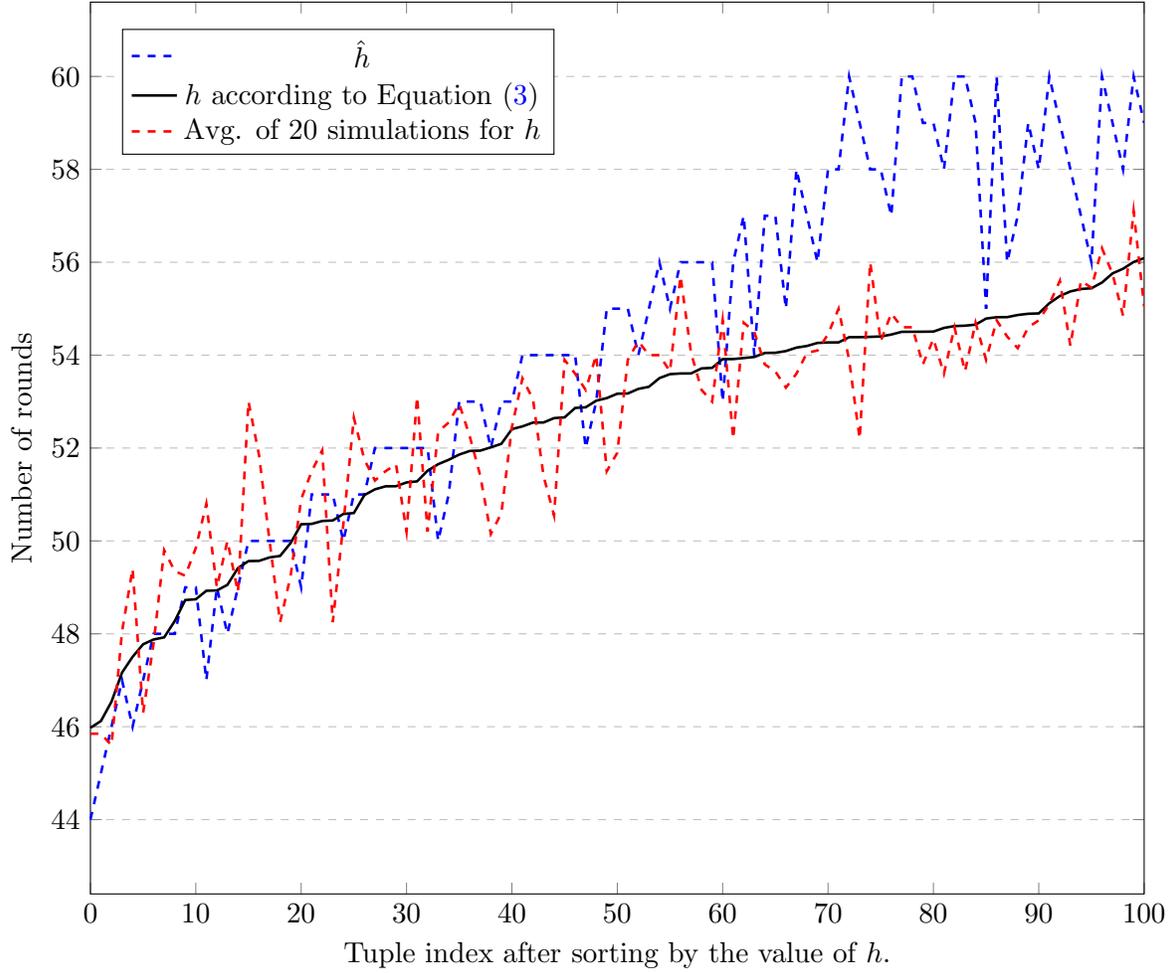
\begin{figure}
    \centering
    \begin{tikzpicture}
        \begin{axis}[
        every axis plot/.append style={line width=0.975pt},
            title={},
            width=\linewidth,
            xlabel={Tuple index after sorting by the value of $h$.},
            ylabel={Number of rounds},
            xmin=0,
            xmax=100,
            legend pos=north west,
            ymajorgrids=true,
            grid style=dashed,
        ]
        \addplot[color=blue, mark='-', dashed] coordinates { (0, 44.0)(1, 45.0)(2, 46.0)(3, 47.0)(4, 46.0)(5, 47.0)(6, 48.0)(7, 48.0)(8, 48.0)(9, 49.0)(10, 49.0)(11, 47.0)(12, 49.0)(13, 48.0)(14, 49.0)(15, 50.0)(16, 50.0)(17, 50.0)(18, 50.0)(19, 50.0)(20, 49.0)(21, 51.0)(22, 51.0)(23, 51.0)(24, 50.0)(25, 51.0)(26, 51.0)(27, 52.0)(28, 52.0)(29, 52.0)(30, 52.0)(31, 52.0)(32, 52.0)(33, 50.0)(34, 51.0)(35, 53.0)(36, 53.0)(37, 53.0)(38, 52.0)(39, 53.0)(40, 53.0)(41, 54.0)(42, 54.0)(43, 54.0)(44, 54.0)(45, 54.0)(46, 54.0)(47, 52.0)(48, 53.0)(49, 55.0)(50, 55.0)(51, 55.0)(52, 54.0)(53, 55.0)(54, 56.0)(55, 55.0)(56, 56.0)(57, 56.0)(58, 56.0)(59, 56.0)(60, 53.0)(61, 56.0)(62, 57.0)(63, 54.0)(64, 57.0)(65, 57.0)(66, 55.0)(67, 58.0)(68, 57.0)(69, 56.0)(70, 58.0)(71, 58.0)(72, 60.0)(73, 59.0)(74, 58.0)(75, 58.0)(76, 57.0)(77, 60.0)(78, 60.0)(79, 59.0)(80, 59.0)(81, 58.0)(82, 60.0)(83, 60.0)(84, 59.0)(85, 55.0)(86, 60.0)(87, 56.0)(88, 57.0)(89, 59.0)(90, 58.0)(91, 60.0)(92, 59.0)(93, 58.0)(94, 57.0)(95, 56.0)(96, 60.0)(97, 59.0)(98, 58.0)(99, 60.0)(100, 59.0) };

        \addplot[color=black, mark='-'] coordinates { (0, 45.976408626269674)(1, 46.12287165082226)(2, 46.53920421678334)(3, 47.165058682792335)(4, 47.50396872343833)(5, 47.77478531879051)(6, 47.87809631413966)(7, 47.92378542293321)(8, 48.27433350432007)(9, 48.72728898434274)(10, 48.74366401183474)(11, 48.92712533372472)(12, 48.93339289638899)(13, 49.054551857361865)(14, 49.4157950733903)(15, 49.56599947785703)(16, 49.5700140341481)(17, 49.644503413314794)(18, 49.67682277301134)(19, 49.95519505997238)(20, 50.35907954012349)(21, 50.36661034752679)(22, 50.42926828389239)(23, 50.44040665167286)(24, 50.58064146200401)(25, 50.600512221920525)(26, 50.98612800525728)(27, 51.11082297879122)(28, 51.1751979056535)(29, 51.17783514296492)(30, 51.25945090128175)(31, 51.28198286549036)(32, 51.512797635188605)(33, 51.651945339797386)(34, 51.74720305266341)(35, 51.859201756159095)(36, 51.93685307708679)(37, 51.944362780670126)(38, 52.015086422129485)(39, 52.091081863752905)(40, 52.40792114144442)(41, 52.46582880237622)(42, 52.54813507525275)(43, 52.54986601438202)(44, 52.64435157675961)(45, 52.65998166663149)(46, 52.86300191091007)(47, 52.87707897342995)(48, 53.019072219604126)(49, 53.07501425693806)(50, 53.16949014594548)(51, 53.174591582044386)(52, 53.273498659512725)(53, 53.31875842426261)(54, 53.50576137760429)(55, 53.59070438856077)(56, 53.60497647242384)(57, 53.606142744438024)(58, 53.71474644936977)(59, 53.72581976484488)(60, 53.913955535515)(61, 53.915066524799286)(62, 53.93812804490791)(63, 53.96046494241949)(64, 54.04709534076405)(65, 54.05073446375285)(66, 54.087770010496854)(67, 54.161992393946505)(68, 54.19746361798793)(69, 54.26317609425418)(70, 54.27332022085987)(71, 54.27417905575799)(72, 54.38649813163828)(73, 54.386989022314246)(74, 54.393590997147534)(75, 54.40207616029504)(76, 54.44342954090535)(77, 54.50222977619359)(78, 54.50299118217261)(79, 54.50433616208786)(80, 54.50728574567164)(81, 54.58590813554514)(82, 54.6264387636261)(83, 54.63408510760178)(84, 54.65674645876372)(85, 54.78754019512651)(86, 54.81663600626329)(87, 54.818300773750664)(88, 54.86405061597931)(89, 54.89109259118552)(90, 54.89823303633792)(91, 55.11735157458611)(92, 55.27347617166674)(93, 55.37146012668457)(94, 55.42381249537505)(95, 55.44007187060478)(96, 55.562809178110605)(97, 55.76004471512821)(98, 55.86010864242161)(99, 56.00531008152717)(100, 56.08927527700829) };
       
        \addplot[color=red, mark='-', dashed] coordinates { (0, 45.85)(1, 45.85)(2, 45.6)(3, 48.05)(4, 49.4)(5, 46.3)(6, 47.8)(7, 49.8)(8, 49.35)(9, 49.25)(10, 49.85)(11, 50.8)(12, 49.0)(13, 50.0)(14, 48.9)(15, 53.0)(16, 51.85)(17, 50.0)(18, 48.25)(19, 49.25)(20, 50.9)(21, 51.5)(22, 51.95)(23, 48.25)(24, 50.35)(25, 52.65)(26, 51.75)(27, 51.3)(28, 51.5)(29, 51.65)(30, 50.2)(31, 53.1)(32, 50.2)(33, 52.35)(34, 52.55)(35, 52.95)(36, 52.25)(37, 51.4)(38, 50.15)(39, 50.6)(40, 52.45)(41, 53.5)(42, 53.05)(43, 51.4)(44, 50.55)(45, 53.9)(46, 53.6)(47, 53.25)(48, 54.0)(49, 51.5)(50, 51.9)(51, 53.9)(52, 54.3)(53, 54.0)(54, 54.0)(55, 53.65)(56, 55.7)(57, 54.05)(58, 53.25)(59, 53.0)(60, 54.75)(61, 52.2)(62, 54.7)(63, 54.5)(64, 53.8)(65, 53.65)(66, 53.3)(67, 53.6)(68, 54.05)(69, 54.1)(70, 54.45)(71, 55.0)(72, 53.9)(73, 52.2)(74, 56.0)(75, 54.3)(76, 54.9)(77, 54.6)(78, 54.6)(79, 53.8)(80, 54.35)(81, 53.6)(82, 54.6)(83, 53.65)(84, 54.7)(85, 53.9)(86, 54.75)(87, 54.4)(88, 54.15)(89, 54.6)(90, 54.75)(91, 55.1)(92, 55.6)(93, 54.2)(94, 55.6)(95, 55.45)(96, 56.3)(97, 55.75)(98, 54.85)(99, 57.1)(100, 55.05) };

\legend{$\hat{h}$,$h$ according to \Cref{eq:general-recursive}, Avg. of 20 simulations for $h$}
        
        \end{axis}
    \end{tikzpicture}
    \caption{Illustration of the relationship between $\hat{h}$, $h$ computed acocrding to~\Cref{eq:general-recursive}, and the average of 20 simulations of according to Model~\ref{alg:model-M}.}
    \label{fig:approx-h}
\end{figure}

The core idea is showing that the type of goodie $m := \arg\min_c n_c$ is the most important one to approximate $\mathbb{E}[\tau]$. 
As a first step, we will need a lemma stating that types that start with fewer items are more likely to empty first. 

\begin{lemma}
Let $(n_1, \ldots, n_K)$ be an initial assortment, and let $i, j$ be indices such that $n_i < n_j$. Let $p$ the random variable defined as the type of goodie (i.e., a number in $\{1, \ldots, K\}$) that is emptied first. Then, for all $t$ we have 
\[
\Pr[p=i~\,\mathrm{and} \,~\tau\leq t] \geq \Pr[p=j~\,\mathrm{and}\,~\tau\leq t].
\]\label{lemma:prob-inequality}
\end{lemma}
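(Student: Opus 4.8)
The plan is to strip the process down to an equivalent i.i.d. model and then exploit a symbol-swapping symmetry. First I would observe that before the first emptying event $\tau$, every type that starts with positive stock is still in the support, so at each step Model~\ref{alg:model-M} draws a type uniformly from a \emph{fixed} set of positive-stock types. Hence I can couple the process with an infinite sequence $X_1, X_2, \ldots$ of i.i.d.\ uniform draws over the positive-stock types (I may assume all relevant $n_k \geq 1$, discarding zero-stock types which never enter the support; in particular $n_j \geq 1$ and I take $n_i \geq 1$, which is the meaningful regime). For each type $k$ let $T_k$ be the time of the $n_k$-th occurrence of $k$ in this sequence; since every symbol appears infinitely often almost surely, each $T_k$ is finite a.s., and because exactly one count increments per step, the $T_k$ are pairwise distinct. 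Then $\tau = \min_k T_k$ and $p = \arg\min_k T_k$, and writing $W_i := T_i$, $W_j := T_j$ the two events to compare become
\[
\{p=i,\ \tau\le t\} = \{W_i \le t\}\cap\{W_i<W_j\}\cap\bigcap_{k\ne i,j}\{W_i<T_k\},
\]
and the analogue with $i,j$ exchanged.

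Next I would introduce the measure-preserving swap $\sigma$ that applies the transposition $(i\ j)$ coordinatewise to the draw sequence: since the uniform law on the alphabet is invariant under this transposition and the draws are independent, $\sigma$ preserves the measure, so $\Pr[p=j,\,\tau\le t]=\Pr[\sigma(X)\in\{p=j,\,\tau\le t\}]$. In $\sigma(X)$ the $i$- and $j$-draws are interchanged while all other types are untouched, so $T_k$ is unchanged for $k\ne i,j$, the $n_j$-th $j$-draw of $\sigma(X)$ is the $n_j$-th $i$-draw of $X$, and the $n_i$-th $i$-draw of $\sigma(X)$ is the $n_i$-th $j$-draw of $X$. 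Introducing $U :=$ time of the $n_j$-th $i$-draw and $V :=$ time of the $n_i$-th $j$-draw in $X$, this translates to
\[
\Pr[p=j,\ \tau\le t] = \Pr\!\left[\,U\le t,\ U<V,\ U<T_k\ \forall k\ne i,j\,\right].
\]

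The crux of the argument is then a pointwise set inclusion. Because $n_i<n_j$, the $n_i$-th occurrence of a symbol precedes its $n_j$-th occurrence, which gives the almost-sure inequalities $W_i<U$ and $V<W_j$. Consequently, on the event $\{U\le t,\ U<V,\ U<T_k\ \forall k\}$ I obtain $W_i<U\le t$, the chain $W_i<U<V<W_j$, and $W_i<U<T_k$ for every other $k$; that is, this event is contained in $\{W_i\le t,\ W_i<W_j,\ W_i<T_k\ \forall k\} = \{p=i,\ \tau\le t\}$. Taking probabilities and combining with the swap identity yields $\Pr[p=j,\,\tau\le t]\le\Pr[p=i,\,\tau\le t]$, as required.

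I expect the main obstacle to be conceptual rather than computational: the whole proof hinges on recognizing that a single label swap, together with the elementary fact $W_i<U$ and $V<W_j$, collapses the inequality into a pointwise domination, after which nothing needs to be estimated. The points requiring genuine care are (i) justifying the reduction to i.i.d.\ uniform draws, which relies on the support being constant until $\tau$ and on removing zero-stock types so that the event $\{p=i\}$ is not vacuous (this is precisely why the relevant stocks must be taken positive), and (ii) checking that under $\sigma$ the hitting times transform exactly as claimed and that the one-draw-per-step property makes all $T_k$ distinct, so that $p$ is well defined and no boundary ties with the cutoff $t$ can arise.
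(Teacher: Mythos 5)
Your argument is correct, and it takes a genuinely different route from the paper. The paper proves the stronger per-$t$ conditional inequality $\Pr[p=i\mid\tau=t]\ge\Pr[p=j\mid\tau=t]$ by a two-stage induction (\Cref{lemma:1vn2,lemma:n1vn2}): a base case $n_i=1$ handled by counting sequences with the truncated-coloring function $h_t(\cdot)$ and binomial coefficients, and then an induction on $n_i$ at fixed difference $n_j-n_i$ that requires several combinatorial manipulations. You instead embed the pre-$\tau$ dynamics into an infinite i.i.d.\ uniform sequence (valid because the support is constant until the first emptying, once zero-stock types are discarded, matching the paper's implicit positivity assumption), apply the measure-preserving label swap $(i\ j)$, and finish with a pointwise event inclusion driven by the elementary facts $T_i<U$ and $V<T_j$ when $n_i<n_j$. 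This is shorter, avoids all computation, and directly yields the cumulative statement $\Pr[p=i,\tau\le t]\ge\Pr[p=j,\tau\le t]$, which is exactly what \Cref{lemma:chernoff-hoeffding} uses; what it does not give (and does not need) is the fixed-$\tau=t$ conditional comparison, since the swap strictly decreases the first-emptying time on the relevant event rather than preserving it. The two points you flag as needing care --- the legitimacy of the i.i.d.\ reduction and the almost-sure distinctness of the hitting times $T_k$ --- are indeed the only delicate spots, and your treatment of both is sound.
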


Despite it being intuitive, our proof of~\Cref{lemma:prob-inequality} uses some non-trivial calculations and is deferred to~\Cref{app:proofs}.


We now give a lower bound for $\mathbb{E}[\tau]$ in terms of $n_m$ and $K$.

\begin{lemma}
For any $K>1$ and positive integers $n_1,\dots,n_K$, we have
\[
\mathbb{E}[\tau(n_1,\dots,n_K)] \geq (K-2)\cdot n_m - \delta,
\]
where $\delta$ is a real number such that 
\( 0 \geq \delta \geq \left(K - 2\right)\left(\sqrt{3\ln K \cdot (2n_m + 3\ln K)} - 3\ln K \right).
\)
\label{lemma:chernoff-hoeffding}
\end{lemma}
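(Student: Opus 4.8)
The plan is to prove the bound by a tail argument: I will exhibit a time $T^\star$ slightly below $K n_m$ such that no type is likely to have emptied before $T^\star$, and then use the elementary inequality $\mathbb{E}[\tau] \ge T^\star \cdot \Pr[\tau \ge T^\star]$. The observation that makes the relevant probabilities computable is that, \emph{before} the first emptying event, every type still has positive stock, so each step of Model~\ref{alg:model-M} draws a type uniformly among \emph{all} $K$ of them. Coupling the process with an idealized one that always draws uniformly among the $K$ types, the vector of pick-counts $(D_1(t), \dots, D_K(t))$ after $t$ steps is distributed as $\mathrm{Multinomial}(t, (1/K, \dots, 1/K))$, the two processes agree up to time $\tau$, and $\{\tau > t\}$ is exactly the event $\{D_c(t) < n_c \text{ for all } c\}$. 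In particular each marginal $D_c(t)$ is $\mathrm{Binomial}(t, 1/K)$ with mean $t/K$, and type $c$ has emptied by time $t$ precisely when $D_c(t) \ge n_c$.

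With this reduction, $\Pr[\tau \le t] = \Pr[\exists c:\, D_c(t) \ge n_c]$. Writing the disjoint events according to the first-emptied type and invoking~\Cref{lemma:prob-inequality} (the smallest-stock type is the most likely to empty first) together with $n_c \ge n_m$, I bound each term by the contribution of a minimum-stock type, obtaining $\Pr[\tau \le t] \le K \cdot \Pr[\mathrm{Binomial}(t, 1/K) \ge n_m]$. I then set $T^\star := K(n_m - x)$, so that the binomial has mean $\mu := n_m - x$ and $\{\mathrm{Binomial}(T^\star, 1/K) \ge n_m\}$ is an upper-tail event at distance $x$ above the mean. Applying a Chernoff--Hoeffding upper-tail bound of the shape $\Pr[\mathrm{Binomial}(T^\star, 1/K) \ge \mu + x] \le \exp(-x^2/(2\mu))$ and forcing the exponent to equal $3 \ln K$ makes each term at most $K^{-3}$, so that $\Pr[\tau \le T^\star] \le K^{-2} \le 2/K$.

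It remains to solve for $x$ and assemble. The requirement $x^2/(2\mu) = 3\ln K$ with $\mu = n_m - x$ is the quadratic $x^2 + 6\ln K \cdot x - 6 n_m \ln K = 0$, whose positive root is exactly $x = \sqrt{3\ln K\,(2 n_m + 3\ln K)} - 3\ln K$. Plugging in, $\mathbb{E}[\tau] \ge T^\star \cdot \Pr[\tau \ge T^\star] \ge T^\star (1 - 2/K) = K(n_m - x)\cdot \tfrac{K-2}{K} = (K-2)(n_m - x) = (K-2) n_m - \delta$ with $\delta = (K-2)x$, which is exactly the claimed bound. If $x > n_m$ the asserted lower bound is nonpositive and holds trivially because $\tau \ge 0$, so I may assume $x \le n_m$, which also guarantees $T^\star \ge 0$.

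The main obstacle is making the tail estimate land on the \emph{exact} stated form of $\delta$: the clean expression is forced by equating the Chernoff exponent with $3\ln K$, but the honest upper-tail bound for a Binomial actually carries a correction, $\exp(-x^2/(2\mu + 2x/3))$ rather than $\exp(-x^2/(2\mu))$, so some care is needed either to absorb this term or to note that, since $x = O(\sqrt{n_m \ln K}) = o(\mu)$, it only perturbs lower-order terms and leaves the asymptotic conclusion $\mathbb{E}[\tau] = (1-o(1))\,K n_m$ intact (matching the upper bound $\mathbb{E}[\tau] \le K n_m$ from~\Cref{lemma:wald}). Minor additional bookkeeping is required for the integrality of $T^\star$ (round down, which only shrinks the mean and hence strengthens the tail bound) and for ties among minimum-stock types when applying~\Cref{lemma:prob-inequality}.
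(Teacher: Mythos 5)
Your route is essentially the paper's: before the first emptying event every stock is positive, so the process can be compared with i.i.d.\ uniform choices among the $K$ types; one bounds $\Pr[\tau\le t]\le K\cdot\Pr[\mathrm{Binomial}(t,1/K)\ge n_m]$, picks $t=T^\star$ slightly below $Kn_m$ so that this probability is at most $2/K$ via a Chernoff--Hoeffding estimate, and concludes from $\E[\tau]\ge T^\star\cdot\Pr[\tau>T^\star]$. Your quadratic and the resulting $\delta=(K-2)x$ are exactly the paper's; the differences (you measure the offset in items while the paper measures it in rounds, $x_{\mathrm{paper}}=Kx$, and you target $K^{-3}$ per type while the paper targets $2/K^2$) are cosmetic. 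A small bonus of your multinomial coupling is that \Cref{lemma:prob-inequality} is not even needed for this step: since $\{\tau\le t\}=\{\exists c:\,D_c(t)\ge n_c\}$ and $n_c\ge n_m$, a plain union bound over the $K$ binomial marginals already gives the factor $K$, whereas the paper, working directly with first-emptied-type probabilities, does need that lemma.

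The one genuine gap is the tail inequality you lean on: $\Pr[\mathrm{Binomial}\ge\mu+x]\le\exp\bigl(-x^2/(2\mu)\bigr)$ is the lower-tail (Gaussian-heuristic) form, not a valid upper-tail bound; the honest versions are $\exp\bigl(-x^2/(2\mu+2x/3)\bigr)$ (Bernstein) or $\exp\bigl(-x^2/(3\mu)\bigr)$ (multiplicative Chernoff, for $0\le x\le\mu$). Your fallback of absorbing the correction because it is lower order does prove the asymptotic statement (\Cref{cor:chernof}), but not the exact $\delta$ of \Cref{lemma:chernoff-hoeffding}: redoing your computation with Bernstein forces a strictly larger $x$, hence a larger $\delta$ than stated. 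The repair is immediate and stays inside your argument: your condition $x^2/(2\mu)\ge 3\ln K$ is algebraically identical to $x^2/(3\mu)\ge 2\ln K$, so replace your bound by the valid $\exp\bigl(-x^2/(3\mu)\bigr)$ --- precisely the form the paper uses, in its two-sided version $2\exp\bigl(-d^2/(3\mu)\bigr)$. Each type then contributes at most $K^{-2}$, so $\Pr[\tau\le T^\star]\le 1/K\le 2/K$, and the same root $x=\sqrt{3\ln K\,(2n_m+3\ln K)}-3\ln K$ and $\delta=(K-2)x$ drop out. (The validity range $x\le\mu$ of this form is a condition the paper also leaves implicit, so you are no worse off on that point.)
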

\begin{proof}
    Consider the sequence of independent Bernoulli random variables defined as 
        \[A_n:= \begin{cases} 1 & \text{with prob. } 1/K \\
            0 & \text{ with prob.} (K-1)/K \end{cases}\]
        and let $Y_n = \sum_{i=1}^{n} A_i$. Thus $Y_n$ is a finite sum. By the Chernoff-Hoeffding bound we get that for any $d\geq 0$ and $\mu = \mathrm{E}[Y_n]$, we have
    \begin{equation}
    \Pr[\left|Y_n-\mu \right| \geq d ] \leq 2 \cdot \exp{\left(\frac{-d^2}{3\mu}\right)}.
    \label{eq:chernoff-hoeffding}   
\end{equation}
Let $0 \leq x \leq K\cdot n_m$ be a real number we will determine later. We will take $n := \lfloor K \cdot n_m - x \rfloor$. Given that $\E[A_i] = \frac{1}{K}$ for every $i$, we have $\mu = \E[Y_n] = \frac{n}{K}$.
 Replacing into~\Cref{eq:chernoff-hoeffding} with $d = x/K$, we obtain 
    \begin{equation}
        \Pr\left[\left|Y_n - \frac{n}{K}\right| \geq  \frac{x}{K}\right] \leq 2 \cdot \exp \left(\frac{-x^2/K^2}{ \left(\frac{3 \lfloor K \cdot n_m - x \rfloor}{K} \right)}\right)  \leq  2 \cdot \exp \left(\frac{-x^2}{3K \left(K \cdot n_m - x\right)}\right).
    \end{equation}
    On ther other hand, using that $(n+x)/K  \leq (K \cdot n_m  -x + x)/K = n_m/K$, we have 
    \begin{equation}
        \Pr\left[\left|Y_n - \frac{n}{K}\right|\geq  \frac{x}{K}\right] \geq   \Pr\left[Y_n  \geq \frac{n}{K} + \frac{x}{K}\right]  \geq \Pr[Y_n \geq n_m].
        \label{eq:gt-nm}
    \end{equation}
    We now find a value of $x$ for which the $\textrm{RHS}$ is lower than $\frac{2}{K^2}$. This is equivalent to $x$ verifying the inequality
    \begin{equation} \frac{x^2}{3 K  (K \cdot n_m - x)} \geq 2 \ln K. \label{eq:x-solutions}\end{equation}
   By the quadratic formula, we obtain that the positive solutions are
    \[x \geq K\left(\sqrt{3\ln K \cdot (2n_m + 3\ln K)} - 3\ln K \right).\]
    
    Then, combining the previous calculation with~\Cref{eq:gt-nm}, and recalling that $p$ is the random variable denoting the type that runs off for the first time, for any value of $x$ that satisfies~\Cref{eq:x-solutions}, we have
    \[ 
        \frac{2}{K^2}\geq \Pr[Y_n \geq n_m] \geq \Pr[p=m~\text{and}~ \tau \leq n ].
    \] 
Let us consider from now on a fixed value $x^*$ that satisfies~\Cref{eq:x-solutions}.
    
    Using the abbreviation $P_t^i:=\Pr[p=i~\text{and}~ \tau\leq t ]$. In this notation,~\Cref{lemma:prob-inequality}~establishes that $P_t^m\geq P_t^i$ for every $i \in \{1, \ldots, K\}$ and $t \in \mathbb{N}$. Now, note that
    \begin{equation}
        \Pr[\tau \leq n] = \sum_{i=1}^K P_n^i \leq  \sum_{i=1}^K P_n^m = K \cdot P_n^m \leq K \cdot \frac{2}{K^2} = \frac{2}{K},
    \end{equation}
    from where 
    \begin{equation}
        1 - \frac{2}{K} \leq 1-\Pr[\tau \leq n] = \Pr[\tau>n].\label{eq:almost-final}
    \end{equation}
    We are now ready to conclude by the following chain of inequalities:

    \begin{align*}
    \mathbb{E}[\tau(n_1,\dots,n_K)] & = \sum_{t=1}^{\infty} t \cdot \Pr[\tau = t]\\
    & = \sum_{t\leq n} t \cdot \Pr[\tau=t] + \sum_{t> n} t \cdot \Pr[\tau = t]\\
    & \geq \sum_{t > n} t \cdot \Pr[\tau=t] \\
    & \geq n \cdot \sum_{t > n} \cdot \Pr[\tau=t]\\
    & \geq n \cdot \Pr[\tau >n] \\
    & \geq n \cdot \left(1 - \frac{2}{K}\right)\tag{By~\Cref{eq:almost-final}}\\
    &\geq (K - 2) \cdot n_m - x^* \left(1 - \frac{2}{K}\right).\\
    \end{align*}
    Note that the last inequality holds for any such fixed $x^*$, therefore we get
    $$\E[\tau(n_1,\dots,n_K)]\geq (K - 2) \cdot n_m - x^* \left(1 - \frac{2}{K}\right) = (K-2)\cdot n_m - \delta$$
    for all $\delta$ such that 
    \begin{align*}
    \delta &\geq \left(1-\frac{2}{K}\right) \cdot K \left(\sqrt{3\ln K (2n_m + 3\ln K)} - 3\ln K \right)\\ 
    &= \left(K - 2\right)\left(\sqrt{3\ln K \cdot (2n_m + 3\ln K)} - 3\ln K \right).
    \end{align*}
    This concludes the proof.
\end{proof}

As a direct corollary of the previous lemma we get

\begin{corollary}\label{cor:chernof}
Asymptotically, we have \(
\mathbb{E}[\tau(n_1, \ldots, n_K)] \in \Omega\left(K\cdot n_m \left(1-\sqrt{\frac{\ln K}{n_m}} \right) \right) 
\), and thus as long as $\ln K \ll n_m$, the bounds are asymptotically tight.
\end{corollary}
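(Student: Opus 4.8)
The plan is to treat this as a routine asymptotic simplification of the explicit bound established in~\Cref{lemma:chernoff-hoeffding}, combined with the matching upper bound coming from~\Cref{lemma:wald}. Starting from the best (i.e., smallest-$\delta$) instance of the lemma,
\[
\E[\tau(n_1,\dots,n_K)] \geq (K-2)\cdot n_m - \left(K-2\right)\left(\sqrt{3\ln K \cdot (2n_m + 3\ln K)} - 3\ln K\right),
\]
I would factor out $(K-2)$ to rewrite the right-hand side as $(K-2)\bigl[n_m - \sqrt{3\ln K \,(2n_m + 3\ln K)} + 3\ln K\bigr]$, and then discard the nonnegative summand $3\ln K$ (dropping it only weakens the lower bound, so the inequality is preserved) to obtain the cleaner estimate $\E[\tau] \geq (K-2)\bigl(n_m - \sqrt{3\ln K\,(2n_m + 3\ln K)}\bigr)$.

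The next step is to control the radical in the asymptotic regime $\ln K \ll n_m$. In this regime $3\ln K = o(n_m)$, so for all sufficiently large $n_m$ we have $2n_m + 3\ln K \leq 3 n_m$, and hence
\[
\sqrt{3\ln K\,(2n_m + 3\ln K)} \leq \sqrt{9\, n_m \ln K} = 3\, n_m \sqrt{\frac{\ln K}{n_m}}.
\]
Substituting this back and factoring $n_m$ out of the bracket yields $\E[\tau] \geq (K-2)\, n_m\bigl(1 - 3\sqrt{\ln K / n_m}\bigr)$. Since $(K-2) = \Theta(K)$ and the constant $3$ is absorbed by the $\Omega$-notation, this is exactly the claimed $\E[\tau(n_1,\dots,n_K)] \in \Omega\!\left(K\, n_m\bigl(1 - \sqrt{\ln K / n_m}\bigr)\right)$.

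For the tightness assertion, I would pair this lower bound with the upper bound $\E[\tau(n_1,\dots,n_K)] \leq K\cdot \min_c n_c = K\, n_m$ furnished by~\Cref{lemma:wald}. Whenever $\ln K \ll n_m$ the correction factor satisfies $1 - \sqrt{\ln K / n_m} \to 1$, so the lower and upper bounds are both $\Theta(K\, n_m)$, which is precisely the sense in which the approximation $\gamma(n_1,\dots,n_K) := K\cdot \min_c n_c$ is asymptotically tight. I do not anticipate a genuine obstacle, as the statement is a mechanical corollary; the only point demanding mild care is verifying that the $3\ln K$ term inside the radical is dominated by $2n_m$ in the stated regime, so that the radical can be bounded by a constant multiple of $n_m\sqrt{\ln K / n_m}$ without the lower-order term spoiling the clean $\Omega$-form.
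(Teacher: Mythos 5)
Your derivation is correct and is exactly the mechanical simplification the paper intends: the paper states this as a direct corollary of~\Cref{lemma:chernoff-hoeffding} with no written proof, and your steps (taking the smallest admissible $\delta$, dropping the $+3\ln K$ term, bounding $2n_m+3\ln K\leq 3n_m$ in the regime $\ln K\ll n_m$, absorbing the resulting constant and the factor $(K-2)=\Theta(K)$ into the $\Omega$, and pairing with the Wald upper bound $K\cdot n_m$ for tightness) are the intended ones. The only caveat, which you already flag, is that absorbing the constant in front of $\sqrt{\ln K/n_m}$ is legitimate only in the asymptotic regime $\ln K\ll n_m$ where both correction factors tend to $1$, matching the looseness of the corollary as stated.
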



\section{Simulations and Experimental Results}\label{sec:experimental}



In this section we present experimental results concerning the value of~$\mathbb{E}[\tau(n_1, \ldots, n_K)]$, with the particular goal of studying how tight the bounds derived in~\Cref{sec:k-3} and~\Cref{sec:asymptotic} are.
We directly implemented Model~\ref{alg:model-M} in Python, and ran extensive simulations. 
We use notation $\hat{\tau}$ for the average result of $10\,000$ simulations.  Our main findings are the following:\\

\paragraph*{\textbf{Tightness of upper bound}} 
The behavior of $\mathbb{E}[\tau(n_1, \ldots, n_K)]$ is very close to $K \cdot \min_c n_c$ in a wide variety of settings. Let us denote by $\mathcal{U}^K([a, b])$ the distribution where each $n_i$ in the initial assortment is drawn independently and uniformly at random from $\{a, \ldots, b\}$. 
For different values of $K, a$ and $b$, we generated $30$ random samples according to $\mathcal{U}^K([a,b])$, and for each of those studied the comparison between the theoretical bound $K \cdot n_m$ (where $n_m$ is now the result of sampling a random variable), and $\hat{\tau}$.  The results are depicted in~\Cref{fig:approx-tau}. 
Another interesting distribution consisting in fixing a value of $N = \sum_{i=1}^K n_i$, and then sampling uniformly at random from the set of $K$-tuples with sum $N$. We denote this distribution by $\mathcal{S}(K, N)$, and present the results in~\Cref{fig:S-distribution}.  The figures presented in the later paragraphs also confirm this finding.\\

\paragraph*{\textbf{The role of $K$ in the lower bound}}
As mentioned in~\Cref{cor:chernof}, the quality of our lower bound depends on $K$ being sufficiently large. As depicted in~\Cref{fig:small-K,fig-all-three-tuples,fig:zoomed}, for $K=3$ the lower bound we obtain is roughly $\min_c n_c$, whereas the correct bound is basically $3 \cdot \min_c n_c$, as mentioned in the previous paragraph. In contrast, as~\Cref{fig:K-5} illustrates, already for $K=5$ the lower bound improves over $\min_c n_c$, which is a trivial lower bound. As depicted in~\Cref{fig:large-K}, for $K=50$ and $n_m ~ 200$ the lower bound is roughly $75\%$ of the correct value. 


%
%


%


\section{An Algebraic Approach}\label{sec:algebraic}
We now present an algebraic proof that does not quite work for the case $K=2$. This section is inspired by the analysis of the \emph{Banach's matchbox problem}. 

\begin{theorem}
	Let $N = n_1 + n_2$ be even. Then $\mathbb{E}[u(n_1, n_2)]$ is minimized when $n_1 = n_2 = N/2$.
	\label{thm:K-2}
\end{theorem}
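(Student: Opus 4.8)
The plan is to compute the distribution of $u(n_1,n_2)$ exactly, in the spirit of Banach's matchbox problem, and then reduce the optimization to a one-dimensional convexity statement. Since for $K=2$ the process halts precisely when one type is exhausted, we have $u(n_1,n_2) = N - \tau(n_1,n_2)$, and the surviving goodies all have a single type. First I would count the choice-sequences that leave exactly $m$ goodies of the untouched type: each step before stopping is an independent fair coin flip, and the final step must empty the exhausted type, so a Banach-style path count gives, for $1 \le m \le N-1$,
\[
\Pr[u = m] = \left[\binom{N-m-1}{n_1-1} + \binom{N-m-1}{n_2-1}\right]2^{-(N-m)},
\]
the two terms recording which type empties first (the out-of-range binomials vanish automatically).

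From here the plan is to pass to generating functions. Writing $\mathbb{E}[u] = S(n_1) + S(n_2)$ with $S(a) = \sum_{m} m\binom{N-m-1}{a-1}2^{-(N-m)}$ and substituting $j = N-m$, the two negative-binomial identities $\sum_{j \ge a}\binom{j-1}{a-1}2^{-j} = 1$ and $\sum_{j\ge a} j\binom{j-1}{a-1}2^{-j} = 2a$ evaluate the \emph{untruncated} sum to $N - 2a$. Isolating the truncated tail then yields the decomposition $S(a) = (N - 2a) + R(a)$, where $R(a) = \sum_{j \ge N}(j-N)\binom{j-1}{a-1}2^{-j} = \mathbb{E}[(T_a - N)^+]$ and $T_a$ denotes the waiting time for the $a$-th head in fair Bernoulli trials. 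Because $n_1 + n_2 = N$, the linear parts cancel and one is left with the symmetric expression
\[
\mathbb{E}[u(n_1,n_2)] = R(n_1) + R(N-n_1).
\]

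The theorem then reduces to showing that $\phi(a) := R(a) + R(N-a)$, which is symmetric about $a = N/2$, is minimized at the center; for this it suffices that $R$ be discretely convex, since then $\phi$ is the sum of a convex function and its reflection and, being symmetric, attains its minimum over the integers at $a = N/2$ (here the hypothesis that $N$ is even is what places the center at an integer). To establish convexity of $R$ I would \emph{avoid} a term-by-term binomial computation — the rows of Pascal's triangle are log-concave, not convex, so the second difference is not sign-definite termwise — and argue probabilistically instead: coupling $T_{a+1} = T_a + G$ with $G \ge 1$ an independent geometric increment gives $R(a+1) - R(a) = \mathbb{E}\!\left[\Phi(T_a)\right]$, where $\Phi(t) = \mathbb{E}_G[(t+G-N)^+ - (t-N)^+]$ is nondecreasing in $t$; since $T_a$ is stochastically increasing in $a$, the first difference $R(a+1)-R(a)$ is itself nondecreasing, i.e. $R$ is convex.

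The hard part will be exactly this convexity step. The naive algebraic route through binomial second differences is not manifestly nonnegative, and the statement becomes transparent only once $R(a)$ is recast as the expected overshoot $\mathbb{E}[(T_a - N)^+]$ of a negative-binomial waiting time, after which convexity is read off from stochastic monotonicity. A secondary point demanding care is the bookkeeping of the truncation tail and the boundary values of $m$ when deriving $S(a) = (N-2a) + R(a)$: this is the step that produces the cancellation of the linear terms and is the one place where the constraint $n_1 + n_2 = N$ is genuinely used.
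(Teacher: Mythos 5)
Your argument is correct, and it takes a genuinely different route from what the paper does. The statement itself is established in the paper only through the Section~2 induction (the exchange inequality $h(s+1,l)\ge h(s,l+1)$ plus a minimal-spread contradiction proves the conjecture for $K=2$), while the algebraic proof attached to this theorem in Section~5 starts exactly where you do --- its Banach-style count $\Pr[\downarrow_1^r]=\binom{N-r-1}{n_1-r}(1/2)^{N-r}$ is your $\Pr[u=m]$ formula --- but then organizes the expectation as $(n_1-n_2)(\Pr[\downarrow_1]-\Pr[\downarrow_2])+(N-1)\binom{N-2}{n_1-1}(1/2)^{N-2}$, whose two terms pull in opposite directions at $n_1=n_2$, and the attempt is explicitly abandoned at that roadblock. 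Your negative-binomial normalization avoids the trap: cancelling the linear parts leaves $\mathbb{E}[u(n_1,n_2)]=\mathbb{E}[(T_{n_1}-N)^{+}]+\mathbb{E}[(T_{n_2}-N)^{+}]$, and the theorem reduces to discrete convexity of the overshoot $R(a)=\mathbb{E}[(T_a-N)^{+}]$, which your coupling argument does establish: for each fixed $g\ge 1$ the map $t\mapsto(t+g-N)^{+}-(t-N)^{+}$ is nondecreasing because $x\mapsto(x-N)^{+}$ is convex, so your $\Phi$ is nondecreasing, and $T_a$ is stochastically (indeed pathwise, under the natural coupling) increasing in $a$, so the first differences of $R$ are nondecreasing; a symmetric discretely convex function on $\{0,\dots,N\}$ is then minimized at $N/2$, which is an integer since $N$ is even. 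What your route buys is a clean closed-form identity and a completion of the algebraic approach the paper leaves unfinished; what the paper's induction buys is a scheme that (with considerable effort) extends to $K=3$, whereas it is unclear how the overshoot identity generalizes beyond two types. Two small bookkeeping points for a full write-up: the pointwise distribution formula presupposes $n_1,n_2\ge 1$ (the corner $\min(n_1,n_2)=0$ is trivial, and your identity still covers it since $T_N\ge N$ a.s.\ gives $R(N)=N$), and the identities $\sum_{j\ge a}\binom{j-1}{a-1}2^{-j}=1$ and $\sum_{j\ge a}j\binom{j-1}{a-1}2^{-j}=2a$ require $a\ge 1$.
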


\begin{proof}[An algebraic approach to proving~\Cref{thm:K-2}]
\label{proof:attempt-K-2}

	To prove this, we will find an explicit formula for $\E[u(n_1, n_2)]$, and then show that the pair $(N/2, N/2)$ is a minimizer.
	Let $\downarrow_1$ be the event denoting that all unhappy visitors get goodies of color $1$.
	 Similarly, let $\downarrow_2$ be the event when the unhappy visitors get goodies of color $2$.
	 By the law of total probabilities, we have
	\begin{equation}
		\E[u(n_1, n_2)] = \E[u(n_1, n_2) \mid \, \downarrow_1]\cdot \Pr[\downarrow_1] + \E[u(n_1, n_2) \mid \, \downarrow_2]\cdot \Pr[\downarrow_2]. 
	\end{equation}
	
	To simplify our notation, let
	\[
	E_1 \coloneqq \E[u(n_1, n_2) \mid \, \downarrow_1]\cdot \Pr[\downarrow_1] \; \quad ; \quad 	E_2 \coloneqq \E[u(n_1, n_2) \mid \, \downarrow_2]\cdot \Pr[\downarrow_2].
	\]
	
	Also, let $\downarrow_1^r$ denote the event when the unhappy visitors get goodies of color $1$ and there are exactly $r \geq 1$ unhappy visitors.
	Note that 
	\begin{equation}
		E_1 = \sum_{r = 1}^{n_1} r \cdot \Pr[\downarrow_1^r \mid \; \downarrow_1] \cdot \Pr[\downarrow_1] =   \sum_{r = 1}^{n_1} r \cdot \Pr[\downarrow_1^r],
	\end{equation}
	and analogously,
	\begin{equation}
		E_2 =  \sum_{r = 1}^{n_2} r \cdot \Pr[\downarrow_2^r].
	\end{equation}
	Now let us compute $\Pr[\downarrow_1^r]$ (the same calculations will hold for $\Pr[\downarrow_2^r]$). Indeed, for $\downarrow_1^r$ to occur we need that right before the last happy visitor, there are $r$ goodies remaining of color $1$ and $1$ single goodie remaining of color $2$. After that, we simply need the event where the next visitor takes the last goodie of color $2$, which occurs with probability $1/2$. Therefore,
	\begin{equation}
		\Pr[\downarrow_1^r] = {N-r-1 \choose n_1 - r}\left(\frac{1}{2}\right)^{N-r-1} \cdot \frac{1}{2} = {N-r-1 \choose n_1 - r}\left(\frac{1}{2}\right)^{N-r},
		\label{eq:direct_prob}
	\end{equation}
	as we need that exactly $n_1 - r$ visitors among the initial $N-r-1$ choose goodie of color $1$, and each such sequence of visitor choices has probability $\left(\frac{1}{2}\right)^{N-r-1}$.
	Now note that by rewriting the combinatorial number in terms of factorial we obtain for $r \geq 1$ that
	\begin{equation}
		\Pr[\downarrow_1^{r+1}] = \frac{2 \cdot (n_1 - r)}{N-r-1} \Pr[\downarrow_1^{r}],
	\end{equation}
	and thus 
	\[
		(N-r-1)\Pr[\downarrow_1^{r+1}] = 2 \cdot (n_1 - r)\Pr[\downarrow_1^{r}],
	\]
	which in turn gives us
	\begin{equation}
		N\Pr[\downarrow_1^{r+1}] - (r+1)\Pr[\downarrow_1^{r+1}] = 2n_1\Pr[\downarrow_1^{r}] - 2r\Pr[\downarrow_1^{r}].
		\label{eq:linearized}
	\end{equation}
	By summing over~\Cref{eq:linearized} from $r=1$ to $r=n_1$, and using that $\Pr[\downarrow_1] = \sum_{r=1}^{n_1}\Pr[\downarrow^r_1]$ we obtain
	\[
	N(\Pr[\downarrow_1] - \Pr[\downarrow_1^1])- (E_1 - \Pr[\downarrow_1^1]) = 2n_1\Pr[\downarrow_1] - 2E_1,
	\]
	from where we get
	\begin{equation}
		E_1 = (2n_1-N)\Pr[\downarrow_1] + (N-1)\Pr[\downarrow_1^1],
		\label{eq:E_1}
	\end{equation}
	and analogously,
	\begin{equation}
		E_2 = (2n_2-N)\Pr[\downarrow_2] + (N-1)\Pr[\downarrow_2^1].
		\label{eq:E_2}
	\end{equation}
	
	Now from~\Cref{eq:direct_prob} we have that 
	\[
	\Pr[\downarrow_1^1] = {N-2 \choose n_1 - 1} \left(\frac{1}{2}\right)^{N-1} = {N-2 \choose n_2 - 1} \left(\frac{1}{2}\right)^{N-1} = \Pr[\downarrow_2^1].
	\]

	Recalling that $\E[c(n_1, n_2)] = E_1 + E_2$, 
	and up adding~\Cref{eq:E_1} and~\Cref{eq:E_2}, we get
	\begin{equation}
		\E[c(n_1, n_2)] = (n_1-n_2)(\Pr[\downarrow_1]-\Pr[\downarrow_2]) + (N-1){N-2 \choose n_1 - 1}\left(\frac{1}{2}\right)^{N-2}.
		\label{eq:main-K-2}
	\end{equation}
Let us take a moment to inspect the previous equation, as it is the crux of our approach;
the term
\[
(n_1-n_2)(\Pr[\downarrow_1]-\Pr[\downarrow_2])
\]
is clearly $0$ when $n_1 = n_2$. Moreover, it is not hard to see that it is always non-negative as we prove next.

\begin{claim}
	For all values $n_1, n_2$ we have that $(n_1-n_2)(\Pr[\downarrow_1]-\Pr[\downarrow_2]) \geq 0$.
	\label{claim:sign-probs}
\end{claim} 
\begin{proof}[Proof of~\Cref{claim:sign-probs}]
	Assume without loss of generality that $n_1 \geq n_2 > 0$, and let us see that $\Pr[\downarrow_1] \geq \Pr[\downarrow_2]$. Indeed, the events $\downarrow_1$ and $\downarrow_2$ depend on the values of $n_1$ and $n_2$, so we can write
	\[
		\Pr[\downarrow_1]  = \Pr[\downarrow_1 \mid n_1, n_2] \quad ; \quad \Pr[\downarrow_2]  = \Pr[\downarrow_2 \mid n_1, n_2].
	\]	
	We will now prove the claim by induction on $N = n_1 + n_2$. The base case $N=2$ with $n_1 = n_2$ is trivial.
	For the inductive case, if $n_1 = n_2$, then the claim holds trivially, so we can safely assume $n_1 > n_2$. Now note that
	\[
	\Pr[\downarrow_1 \mid n_1, n_2] = \frac{\Pr[\downarrow_1 \mid n_1-1, n_2]}{2} + \frac{\Pr[\downarrow_1 \mid n_1, n_2-1]}{2},
	\]
	and as $n_1 > n_2$ we have $n_1 - 1 \geq n_2$, which allows us to use the inductive hypothesis on both terms of the previous equation, thus yielding
	\begin{align*}
		\Pr[\downarrow_1 \mid n_1, n_2] &= \frac{\Pr[\downarrow_1 \mid n_1-1, n_2]}{2} + \frac{\Pr[\downarrow_1 \mid n_1, n_2-1]}{2}\\
		&\geq \frac{\Pr[\downarrow_2 \mid n_1-1, n_2]}{2} + \frac{\Pr[\downarrow_2 \mid n_1, n_2-1]}{2}\\
		&= \Pr[\downarrow_2 \mid n_1, n_2],
	\end{align*}
	thus concluding the proof.
\end{proof}

Therefore, if we go back to considering~\Cref{eq:main-K-2}, we see that $n_1 = n_2 = N/2$ minimizes the term
\[
	(n_1-n_2)(\Pr[\downarrow_1]-\Pr[\downarrow_2]).
\]
Unfortunately, $n_1 = n_2 = N/2$ also maximizes the second term of~\Cref{eq:main-K-2}:
\[
	(N-1){N-2 \choose n_1 - 1}\left(\frac{1}{2}\right)^{N-2}.
\]

\begin{figure}
    \centering
        \begin{tikzpicture}

        \begin{axis}[
every axis plot/.append style={line width=0.975pt},
    title={},
    width=\linewidth,
    height=\axisdefaultheight,
    xlabel={Value of $n_1$},
    ylabel={},
    ymax=100,
    xmin=0,
    xmax=100,
    legend pos=north west,
    ymajorgrids=true,
    grid style=dashed,
]
\addplot[
    color=red,
    mark='-',
]
    coordinates {
    (2, 96.0)(3, 94.0)(4, 92.0)(5, 90.0)(6, 88.0)(7, 86.0)(8, 84.0)(9, 82.0)(10, 80.0)(11, 78.0)(12, 75.99999999999997)(13, 73.99999999999976)(14, 71.99999999999835)(15, 69.99999999999001)(16, 67.9999999999441)(17, 65.99999999971028)(18, 63.99999999860439)(19, 61.999999993728714)(20, 59.99999997363584)(21, 57.999999896039924)(22, 55.99999961459018)(23, 53.99999865386971)(24, 51.9999955621496)(25, 49.99998616713763)(26, 47.99995917124035)(27, 45.99988573019758)(28, 43.999696369240226)(29, 41.99923318076021)(30, 39.99815750281425)(31, 37.995784266195805)(32, 35.990807540886124)(33, 33.98088491955586)(34, 31.962072594385035)(35, 29.92816111927482)(36, 27.87006168552384)(37, 25.77552673947113)(38, 23.629623251824572)(39, 21.41644764933532)(40, 19.12249412825969)(41, 16.741789673490967)(42, 14.282374615468974)(43, 11.773024873662669)(44, 9.268486114679433)(45, 6.851201089623775)(46, 4.627813251073839)(47, 2.719719098407121)(48, 1.2484586256812362)(49, 0.3183569495487151)(50, 0.0)(51, 0.3183569495487151)(52, 1.2484586256812362)(53, 2.719719098407121)(54, 4.627813251073839)(55, 6.851201089623775)(56, 9.268486114679433)(57, 11.773024873662669)(58, 14.282374615468974)(59, 16.741789673490967)(60, 19.12249412825969)(61, 21.41644764933532)(62, 23.629623251824572)(63, 25.77552673947113)(64, 27.87006168552384)(65, 29.92816111927482)(66, 31.962072594385035)(67, 33.98088491955586)(68, 35.990807540886124)(69, 37.995784266195805)(70, 39.99815750281425)(71, 41.99923318076021)(72, 43.999696369240226)(73, 45.99988573019758)(74, 47.99995917124035)(75, 49.99998616713763)(76, 51.9999955621496)(77, 53.99999865386971)(78, 55.99999961459018)(79, 57.999999896039924)(80, 59.99999997363584)(81, 61.999999993728714)(82, 63.99999999860439)(83, 65.99999999971028)(84, 67.9999999999441)(85, 69.99999999999001)(86, 71.99999999999835)(87, 73.99999999999976)(88, 75.99999999999997)(89, 78.0)(90, 80.0)(91, 82.0)(92, 84.0)(93, 86.0)(94, 88.0)(95, 90.0)(96, 92.0)(97, 94.0)(98, 96.0)(99, 98.0)
    };

\addplot[
    color=blue,
    mark='-',
]
    coordinates {
(2, 3.0614114009817026e-26)(3, 1.4847845294761258e-24)(4, 4.7513104943236025e-23)(5, 1.1284362424018556e-21)(6, 2.1214601357154885e-20)(7, 3.288263210359007e-19)(8, 4.321717362186124e-18)(9, 4.915953499486716e-17)(10, 4.915953499486716e-16)(11, 4.375198614543177e-15)(12, 3.5001588916345416e-14)(13, 2.537615196435043e-13)(14, 1.678730053026259e-12)(15, 1.019228960765943e-11)(16, 5.7076821802892805e-11)(17, 2.9608601310250645e-10)(18, 1.42817959261209e-09)(19, 6.426808166754405e-09)(20, 2.7060244912650122e-08)(21, 1.0688796740496798e-07)(22, 3.9701245036130966e-07)(23, 1.3895435762645838e-06)(24, 4.591535295482973e-06)(25, 1.434854779838429e-05)(26, 4.2471701483217495e-05)(27, 0.00011924746954903375)(28, 0.0003179932521307566)(29, 0.0008063400321887043)(30, 0.0019463380087313554)(31, 0.004476577420082117)(32, 0.009819589179534967)(33, 0.020559764844651336)(34, 0.04111952968930267)(35, 0.0786108655824904)(36, 0.14374558277941102)(37, 0.2515547698639693)(38, 0.42152420896124587)(39, 0.6766572828062104)(40, 1.0410112043172468)(41, 1.5354915263679392)(42, 2.17215874461806)(43, 2.947929724838796)(44, 3.8391642928133156)(45, 4.798955366016645)(46, 5.758746439219974)(47, 6.635077419101274)(48, 7.340936719005665)(49, 7.799745263943518)(50, 7.958923738717876)(51, 7.799745263943518)(52, 7.340936719005665)(53, 6.635077419101274)(54, 5.758746439219974)(55, 4.798955366016645)(56, 3.8391642928133156)(57, 2.947929724838796)(58, 2.17215874461806)(59, 1.5354915263679392)(60, 1.0410112043172468)(61, 0.6766572828062104)(62, 0.42152420896124587)(63, 0.2515547698639693)(64, 0.14374558277941102)(65, 0.0786108655824904)(66, 0.04111952968930267)(67, 0.020559764844651336)(68, 0.009819589179534967)(69, 0.004476577420082117)(70, 0.0019463380087313554)(71, 0.0008063400321887043)(72, 0.0003179932521307566)(73, 0.00011924746954903375)(74, 4.2471701483217495e-05)(75, 1.434854779838429e-05)(76, 4.591535295482973e-06)(77, 1.3895435762645838e-06)(78, 3.9701245036130966e-07)(79, 1.0688796740496798e-07)(80, 2.7060244912650122e-08)(81, 6.426808166754405e-09)(82, 1.42817959261209e-09)(83, 2.9608601310250645e-10)(84, 5.7076821802892805e-11)(85, 1.019228960765943e-11)(86, 1.678730053026259e-12)(87, 2.537615196435043e-13)(88, 3.5001588916345416e-14)(89, 4.375198614543177e-15)(90, 4.915953499486716e-16)(91, 4.915953499486716e-17)(92, 4.321717362186124e-18)(93, 3.288263210359007e-19)(94, 2.1214601357154885e-20)(95, 1.1284362424018556e-21)(96, 4.7513104943236025e-23)(97, 1.4847845294761258e-24)(98, 3.0614114009817026e-26)(99, 3.1238891846752067e-28)
    };

\addplot[
    color=green!50!black,
    mark='-',
]
    coordinates {
(2, 96.0)(3, 94.0)(4, 92.0)(5, 90.0)(6, 88.0)(7, 86.0)(8, 84.0)(9, 82.0)(10, 80.0)(11, 78.0)(12, 76.0)(13, 74.00000000000001)(14, 72.00000000000003)(15, 70.0000000000002)(16, 68.00000000000117)(17, 66.00000000000637)(18, 64.00000000003257)(19, 62.000000000155524)(20, 60.00000000069608)(21, 58.00000000292789)(22, 56.00000001160263)(23, 54.00000004341329)(24, 52.000000153684894)(25, 50.00000051568543)(26, 48.00000164294183)(27, 46.00000497766713)(28, 44.00001436249236)(29, 42.0000395207924)(30, 40.00010384082298)(31, 38.00026084361589)(32, 36.00062713006566)(33, 34.00144468440051)(34, 32.00319212407434)(35, 30.00677198485731)(36, 28.01380726830325)(37, 26.027081509335098)(38, 24.051147460785817)(39, 22.093104932141532)(40, 20.163505332576936)(41, 18.277281199858905)(42, 16.454533360087034)(43, 14.720954598501464)(44, 13.107650407492748)(45, 11.65015645564042)(46, 10.386559690293812)(47, 9.354796517508394)(48, 8.589395344686901)(49, 8.118102213492234)(50, 7.958923738717876)(51, 8.118102213492234)(52, 8.589395344686901)(53, 9.354796517508394)(54, 10.386559690293812)(55, 11.65015645564042)(56, 13.107650407492748)(57, 14.720954598501464)(58, 16.454533360087034)(59, 18.277281199858905)(60, 20.163505332576936)(61, 22.093104932141532)(62, 24.051147460785817)(63, 26.027081509335098)(64, 28.01380726830325)(65, 30.00677198485731)(66, 32.00319212407434)(67, 34.00144468440051)(68, 36.00062713006566)(69, 38.00026084361589)(70, 40.00010384082298)(71, 42.0000395207924)(72, 44.00001436249236)(73, 46.00000497766713)(74, 48.00000164294183)(75, 50.00000051568543)(76, 52.000000153684894)(77, 54.00000004341329)(78, 56.00000001160263)(79, 58.00000000292789)(80, 60.00000000069608)(81, 62.000000000155524)(82, 64.00000000003257)(83, 66.00000000000637)(84, 68.00000000000117)(85, 70.0000000000002)(86, 72.00000000000003)(87, 74.00000000000001)(88, 76.0)(89, 78.0)(90, 80.0)(91, 82.0)(92, 84.0)(93, 86.0)(94, 88.0)(95, 90.0)(96, 92.0)(97, 94.0)(98, 96.0)(99, 98.0)
    };
 \legend{First term of~\Cref{eq:main-K-2}, Second term of~\Cref{eq:main-K-2},
 {\(\mathbb{E}[u(n_1,n_2)]\)} } 
\end{axis}
    
        \end{tikzpicture}
    \caption{Illustration of the terms of~\Cref{eq:main-K-2} for $N=100$. The global minimum is achieved at $n_1 = 50$ as expected.}
    \label{fig:algebraic}
\end{figure}
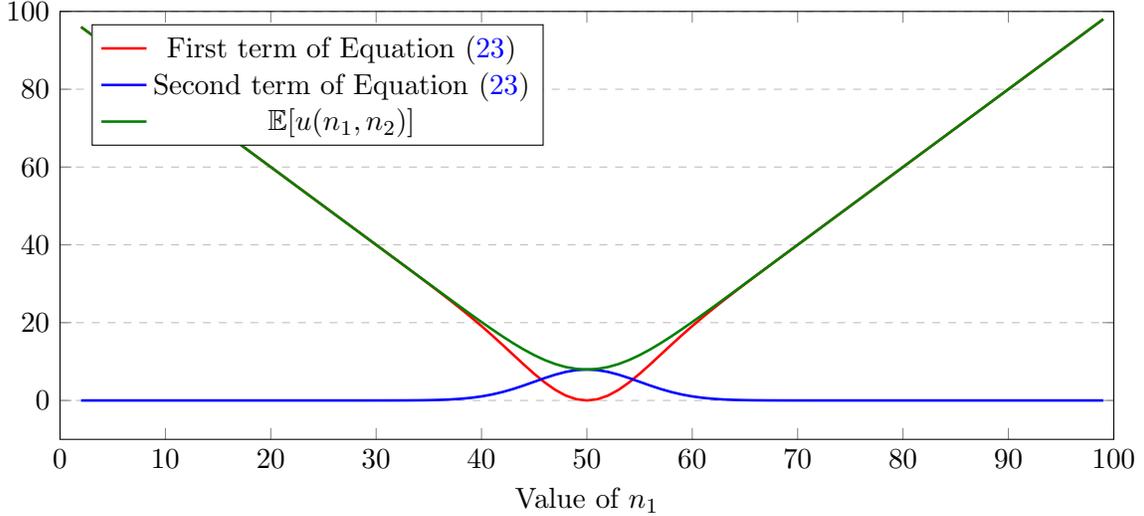

This is illustrated in~\Cref{fig:algebraic}. Therefore, in order to show that $n_1 = n_2 = N/2$ minimizes the entire expression we would need a calculation showing that the first term is \emph{more important} than the second one. At the moment we do not know of a simple algebraic way of showing that this is the case, and thus this entire proof attempt will be left at this roadblock.\footnote{We explicitly suppress the standard \emph{q.e.d} symbol.} \phantom\qedhere
\end{proof}

\section{Concluding Remarks}\label{sec:conclusion}
We have studied~\Cref{conjecture:main}, a natural conjecture regarding a very simple model of assortment optimization. Despite our best efforts, we have not succeeded in fully proving the conjecture. Nonetheless, we have provided several results representing partial steps toward a full proof. \Cref{thm:K-2} proved the conjecture for $K=2$ and~\Cref{thm:K-3} proved the conjecture for $K=3$. Both proofs are based on~\Cref{idea:max-to-min}, and proceed by induction. A full proof by induction seems challenging, as explained in~\Cref{sec:k-2}. We believe the proof of~\Cref{thm:K-3} could be expanded to $K=4$ with significant effort by designing an even more sophisticated induction scheme, but we currently do not know how such an approach could be generalized enough to proof~\Cref{conjecture:main} for all values of $K$ at once. 
Then, in~\Cref{sec:asymptotic}, we have presented asymptotic bounds for the time at which a type of goodies runs out for the first time, which can be used to provide bounds for $\mathbb{E}[u]$. We evaluated our bounds computationally in~\Cref{sec:experimental}.

On the other hand,~\Cref{sec:algebraic} presented an inconclusive algebraic approach for proving the conjecture for $K=2$. With significant more effort, similar but more complicated equations can be derived for $K=3$, but once again we do not know how such an approach could be extended for general $K$.
We hope a future reader of this article might put~\Cref{conjecture:main} to an end, and in order to motivate progress, we offer a reward of $100$ US dollars to the first person or group that proves or refutes the conjecture, in the style of greater mathematicians like Paul Erd\H{o}s, Donald Knuth or Ronald Graham.

\bibliography{main}
\bibliographystyle{plain}

\appendix
\section{Deferred Proofs}
\label{app:proofs}

\begin{proof}[Proof of~\Cref{lemma:wald}]
	Wald's lemma establishes that if $X_n$ is a countable sequence of random variables over $\mathbb{R}$ with finite mean, and $\tau$ is a random variable taking values over $\mathbb{N}$, where for every natural $n$ it holds that
	\[
		\mathbb{E}_{X_n, \tau}\left[X_n \mathbf{1}_{\tau > n}\right] = \mathbb{E}[X_n]\cdot \Pr_{\tau}[\tau > n],
	\]
	and  the infinite series satisfies
	\[
		\sum_{n=1}^\infty \mathbb{E}_{X_n, \tau}\left[|X_n|  \cdot \mathbf{1}_{\tau > n} \right]	< \infty,
	\]
	then 
	\[
		\mathbb{E}_{X_n, \tau}\left[\sum_{n=1}^\tau X_n\right] = \mathbb{E}_{\tau}\left[\sum_{n=1}^\tau \mathbb{E}[X_n]\right].
	\]
	In our case, let us define $K$ different sequences of random variables $X^{(1)}, \ldots, X^{(K)}$, where each $X^{(c)}$ is sequence of i.i.d Bernoulli distributions of parameter $p = 1/K$, corresponding to whether a goodie of color $c$ gets taken, assuming there is an infinite supply of every color.
	It is clear that each $X^{(c)}_n$ has mean $1/K$. Now let $\tau$ be exactly the smallest $j$ such that for some $c$ we have
			\[
				\sum_{n=1}^j X^{(c)}_n = n_c,
			\]
	Then, note that for every $c \in [K]$ we have
	\[
	\mathbb{E}_{X_n, \tau}\left[X^{(c)}_n \mathbf{1}_{\tau> n}\right] = \mathbb{E}\left[X^{(c)}_n\right]\cdot \Pr_{\tau}[\tau > n],
	\]
	even though $\mathbf{1}_{\tau > n}$ depends on the $X^{(c)}_n$ variables, as by total probabilities we have
	\[
	\mathbb{E}_{X_n, \tau}\left[X^{(c)}_n \mathbf{1}_{\tau > n}\right] = \mathbb{E}_{X_n}\left[X^{(c)}_n \mathbf{1}_{\tau> n} \mid \mathbf{1}_{\tau > n} \right] \Pr_{\tau}[\tau > n] +  \mathbb{E}_{X_n}\left[X^{(c)}_n \mathbf{1}_{\tau > n} \mid \mathbf{1}_{\tau \leq n} \right] \Pr_{\tau}[\tau \leq n],
	\] 
	but 
	\[
	\mathbb{E}_{X_n}\left[X^{(c)}_n \mathbf{1}_{\tau > n} \mid \mathbf{1}_{\tau > n} \right] = \mathbb{E}_{X_n}\left[X^{(c)}_n\right] \quad \text{ and } \quad 
	\mathbb{E}_{X_n}\left[X^{(c)}_n \mathbf{1}_{\tau > n} \mid \mathbf{1}_{\tau \leq n} \right] = 0,
	\]
	from where the condition is satisfied. Given that $\tau \leq N \leq n_1 + \ldots + n_K$, we trivially have that
	\[
		\sum_{n=1}^\infty \mathbb{E}_{X_n, \tau}\left[|X^{(c)}_n|  \cdot \mathbf{1}_{\tau > n} \right]	< \infty,
	\]
	from where we obtain that 
	\[
		\sum_{c=1}^K \mathbb{E}_{X_n, \tau}\left[\sum_{n=1}^\tau X^{(c)}_n\right] =  \sum_{c=1}^K\mathbb{E}_{\tau}\left[\sum_{n=1}^\tau \mathbb{E}\left[X^{(c)}_n\right]\right] = \sum_{c=1}^K\mathbb{E}_{\tau}\left[\tau/K\right] = \mathbb{E}_{\tau}[\tau] = g(n_1, \ldots, n_K).
	\]
	Let $S_c = \sum_{n=1}^\tau X^{(c)}_n$ be the total number of goodies of color $c$ taken right after the first pile gets emptied (that is, within the first $\tau$ rounds). Then, given that $S_c \leq n_c$, we have
	\begin{align*}
	E_{X_n, \tau}\left[S_c\right] &= \mathbb{E}_{X_n, \tau}\left[\sum_{n=1}^\tau X^{(c)}_n\right]\\
								&= \mathbb{E}_{\tau}\left[\sum_{n=1}^\tau \mathbb{E}\left[X^{(c)}_n\right]\right]\tag{Wald's equation}\\
								&=  \mathbb{E}_{\tau}\left[\tau/K\right]\\
								&= \mathbb{E}[\tau(n_1, \ldots, n_K)]/K.
	\end{align*}
	and given that $n_c \geq S_c$, we have
	\[
		n_c \geq E_{X_n, \tau}\left[S_c\right] = 	\mathbb{E}[\tau(n_1, \ldots, n_K)]/K.
	\]
	As we have $\mathbb{E}[\tau(n_1, \ldots, n_K)] \leq K \cdot n_c$ for every $c$, we indeed conclude that 
	\[
		\mathbb{E}[\tau(n_1, \ldots, n_K)] \leq K \cdot \min_c n_c. \qedhere
	\]
\end{proof}

The next two lemmas prove~\Cref{lemma:prob-inequality}. In particular, the proof is by induction over $n_i$, which we identify with $n_1$ while identifying $n_j$ with $n_2$, without loss of generalit. Thus, the base case $n_1 = 1$ is proven in~\Cref{lemma:1vn2}, right below, and its extension to the general case is proven in~\Cref{lemma:n1vn2}.

\begin{lemma}(1 versus $n_2$)\label{lemma:1vn2}
    For any $K>2$ and positive integers $n_3, \dots, n_K$. Let $n_1 = 1$, $n_2>1$, $p$ the random variable defined as the index in $\{1, \ldots, K\}$ of the pile which is first emptied and $\tau$ the random variable defined as the amount of rounds until the first pile gets emptied. Then,
\[
\Pr[p = 1 \mid \tau = t]\geq \Pr[p = 2 \mid \tau = t]
\]
for all $t$.
\end{lemma}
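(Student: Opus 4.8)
The plan is to reduce the conditional inequality to a purely combinatorial comparison of \emph{joint} probabilities, and then to exploit the special structure forced by $n_1 = 1$. First, since $\Pr[p=1 \mid \tau=t]$ and $\Pr[p=2 \mid \tau=t]$ share the common denominator $\Pr[\tau=t]$, for every $t$ with $\Pr[\tau=t]>0$ it suffices to prove $\Pr[p=1 \text{ and } \tau=t] \geq \Pr[p=2 \text{ and } \tau=t]$ (and for $t$ with $\Pr[\tau=t]=0$ the claim is vacuous). Second, I would observe that on either of these two events no pile empties strictly before round $t$, so all $K$ piles are nonempty at the start of each of the first $t$ rounds; hence every draw in rounds $1, \ldots, t$ is uniform over all $K$ piles, and each admissible length-$t$ sequence of draws has probability exactly $(1/K)^t$. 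Thus both joint probabilities equal $(1/K)^t$ times a count of admissible draw-sequences, and the whole problem collapses to comparing two such counts.

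Next I would identify those counts, using that $n_1 = 1$ makes pile $1$ empty the instant it is ever drawn. On $\{p=1,\ \tau=t\}$, pile $1$ must be drawn exactly at round $t$ and never before, so rounds $1, \ldots, t-1$ form a word $w$ over the alphabet $\{2, \ldots, K\}$ in which each letter $c$ occurs at most $n_c - 1$ times (no pile $2, \ldots, K$ may empty early); write $A_1(t)$ for the number of such words. On $\{p=2,\ \tau=t\}$, pile $1$ is never drawn in rounds $1, \ldots, t$ (any draw of pile $1$ would make it, not pile $2$, the first to empty), pile $2$ is drawn for the $n_2$-th time at round $t$, and rounds $1, \ldots, t-1$ form a word over $\{2, \ldots, K\}$ in which letter $2$ occurs exactly $n_2 - 1$ times and each $c \geq 3$ at most $n_c - 1$ times; write $A_2(t)$ for that number. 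The joint probabilities are then $A_1(t)(1/K)^t$ and $A_2(t)(1/K)^t$ respectively.

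The crux is the comparison $A_2(t) \leq A_1(t)$. The words counted by $A_2(t)$ are \emph{exactly} those counted by $A_1(t)$ that use letter $2$ the maximal number $n_2 - 1$ of times; all the constraints on letters $c \geq 3$ are identical, and the equality constraint on letter $2$ is a special case of the inequality constraint $\leq n_2 - 1$. Hence the admissible prefixes for $p=2$ form a subset of those for $p=1$, giving $A_2(t) \leq A_1(t)$ and therefore the joint inequality.

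The step I expect to need the most care is the second one: rigorously justifying the uniform weight $(1/K)^t$ per sequence, i.e.\ that conditioning on ``first emptying at time $t$'' genuinely keeps all $K$ piles alive through round $t$, together with the bookkeeping that pile $1$ is absent from the prefix in \emph{both} events. The reason this base case is so clean—and why the general statement genuinely needs induction—is the coincidence, special to $n_1 = 1$, that ``pile $1$ drawn at most $n_1 - 1$ times'' and ``pile $1$ drawn exactly $n_1 - 1$ times'' both reduce to ``pile $1$ never drawn''. This collapses each prefix to a word over $\{2, \ldots, K\}$, leaving only the one-sided difference in the multiplicity of letter $2$, which yields containment immediately. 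For $n_1 > 1$ the two events impose equality constraints on \emph{different} coordinates (pile $1$ versus pile $2$), neither prefix family contains the other, and one must instead compare them by an injection or recursion—precisely the work carried out in~\Cref{lemma:n1vn2}.
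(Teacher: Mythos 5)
Your proposal is correct and follows essentially the same route as the paper's proof: both reduce to counting draw-sequences of length $t$ (each having probability $(1/K)^t$ since no pile empties before round $t$), both use that $n_1=1$ forces pile $1$ to be absent from the length-$(t-1)$ prefix in either event, and your subset-containment step (prefixes with letter $2$ appearing exactly $n_2-1$ times sit inside those with at most $n_2-1$ occurrences) is exactly the combinatorial content of the paper's identity expanding $h_{t-1}(n_2,n_3,\dots,n_K)$ as a nonnegative sum whose $r=n_2-1$ term is the $p=2$ count.
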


\begin{proof}
    Note that if $t < n_2$ then $\Pr[p = 2 \mid \tau = t] = 0$, and hence the lemma holds. We therefore focus on $t \geq n_2$ from now on. Let $h_i(m_1,\dots,m_j)$ denote the number of ways of coloring $i$ objects using $j$ colors such that the amount of objects getting color $c$ is less than $m_c$. Note now that we can use the $h$ function to understand our problem as follows: each round until $t$ corresponds to an object, and the color $c(t)$ of round $t$ corresponds to which pile gets an item removed in that round.
This way \(
h_t(n_1, \ldots, n_K)
\) is the number of possible sequences for the first $t$ rounds. But how many of those have $p = i$ and  $\tau = t$? To answer this, let $\overline{n_i}$  denote the sequence $(n_1, \ldots, n_{i-1}, n_{i+1}, \ldots, n_K)$, or in other words, all the pile numbers except for $n_i$. Then, there are exactly
\[
c_i = h_{t-n_i}(\overline{n_i})\cdot{t-1\choose n_i-1}
\]
such sequences, we need to choose where the $n_i-1$ will occur within the first $t-1$ rounds (note that the $t$-th round removes from pile $i$ by definition), and make sure through the $h$ function that no other pile has emptied.
Consequently, the total number of sequences with $\tau = t$ is simply
\[
C = \sum_{i=1}^K c_i.
\]


 Therefore, we have
    \begin{equation}
    \Pr[p = 1 \mid \tau = t] = \frac{c_1}{C} = \frac{h_{t-1}(n_2,n_3,\dots,n_K)}{C}
    \label{eq:p1t}
        \end{equation}
    \begin{equation}
    \Pr[p = 2 \mid \tau = t] = \frac{c_2}{C} = h_{t-n_2}(n_3,\dots,n_K)\cdot{t-1\choose n_2-1} \cdot \frac{1}{C}.
    \label{eq:p2t}
     \end{equation}

    By the definition of $h_i$ it follows that
    \begin{equation}
        h_{t-1}(n_2,n_3,\dots,n_K) = \sum_{r=0}^{n_2-1}h_{t-1-r}(n_3,\dots,n_K)\cdot{t-1\choose r}.
        \label{hiLemFer}
    \end{equation}
    Note that the term with $r=n_2-1$ in the RHS of the sum is exactly $c_2$, and as $h_i$ is always non-negative, we can conclude that $\Pr[p = 1 \mid \tau = t]\geq \Pr[p = 2 \mid \tau = t]$ for all $t$.
\end{proof}

In order to analyze the general case of comparing piles $n_1$ and $n_2$ (i.e., $n_1$ not necessarily equal to $1$) we will use a relationship between $h_i(m_1 + x,\dots,m_j)$ and $h_i(m_1,\dots,m_j)$. Indeed, note that Equation \ref{hiLemFer} does not depend on the values of $t, n_2, \dots, n_K$, then in general it leads to
\begin{align*}
    h_i(m_1 + x,m_2,\dots,m_j)&= \sum_{r=0}^{m_1 + x -1}h_{i-r}(m_2,\dots,m_j)\cdot{i\choose r}\\
    &=\sum_{r=0}^{m_1-1}h_{i-r}(m_2,\dots,m_j)\cdot{i\choose r} + \sum_{r=0}^{x-1}h_{i-m_1-r}(m_2,\dots,m_j)\cdot{i\choose m_1+r},
\end{align*}
for all $x,m_1,\dots,m_j$. By rewriting the first term of the RHS according to Equation~\ref{hiLemFer}, we obtain
\begin{equation}
h_i(m_1 + x,m_2,\dots,m_j) = h_i(m_1,m_2,\dots,m_j) + \sum_{r=0}^{x-1}h_{i-m_1-r}(m_2,\dots,m_j)\cdot{i\choose m_1+r}.
    \label{i-xLemaFer}
\end{equation}

\begin{lemma}($n_1$ versus $n_2$)~\label{lemma:n1vn2}
	 The previous lemma holds for all positive numbers $n_1, n_2$ such that $n_1 < n_2$.
\end{lemma}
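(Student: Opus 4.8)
The plan is to reduce the conditional statement to the purely combinatorial inequality $c_1 \ge c_2$ that was used for $n_1=1$ in \Cref{lemma:1vn2}, and then to upgrade the term-matching argument from a single term to a matching of whole families of terms. Recall that on the event $\{\tau=t\}$ one has $\Pr[p=1\mid\tau=t]=c_1/C$ and $\Pr[p=2\mid\tau=t]=c_2/C$ with $c_i=h_{t-n_i}(\overline{n_i})\binom{t-1}{n_i-1}$, so (identifying $n_i$ with $n_1$ and $n_j$ with $n_2$) it suffices to prove $c_1\ge c_2$ whenever $1\le n_1<n_2$; the case $t<n_2$ is trivial since then $c_2=0$. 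First I would expand both counts over the number of times the \emph{partner} pile is used, applying the recursion \eqref{hiLemFer} in its general form to each of the two first coordinates:
\begin{align*}
c_1 &= \binom{t-1}{n_1-1}\sum_{r=0}^{n_2-1} h_{t-n_1-r}(n_3,\dots,n_K)\binom{t-n_1}{r},\\
c_2 &= \binom{t-1}{n_2-1}\sum_{s=0}^{n_1-1} h_{t-n_2-s}(n_3,\dots,n_K)\binom{t-n_2}{s}.
\end{align*}
Both are now nonnegative combinations of the \emph{same} family of quantities $h_j(n_3,\dots,n_K)$, which is the crucial structural observation.

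The key step is a term-by-term domination. For each $s\in\{0,\dots,n_1-1\}$ I would match the $s$-th summand of $c_2$ to the summand of $c_1$ with index $r=s+(n_2-n_1)$, which lies in $\{n_2-n_1,\dots,n_2-1\}\subseteq\{0,\dots,n_2-1\}$ and carries the \emph{identical} factor, since $t-n_1-r=t-n_2-s$. As the unmatched summands of $c_1$ (those with $r<n_2-n_1$) are nonnegative, it then suffices to prove, for every such $s$,
\[
\binom{t-1}{n_1-1}\binom{t-n_1}{n_2-n_1+s}\;\ge\;\binom{t-1}{n_2-1}\binom{t-n_2}{s}.
\]
Writing both sides with factorials, the common factors $(t-1)!$ and $(t-n_2-s)!$ cancel, reducing this to $\tfrac{(n_2-1)!}{(n_1-1)!}\ge\tfrac{(n_2-n_1+s)!}{s!}$, i.e. $\prod_{k=1}^{n_2-n_1}(n_1-1+k)\ge\prod_{k=1}^{n_2-n_1}(s+k)$, which holds factor-by-factor because $s\le n_1-1$. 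Summing the matched inequalities yields $c_1\ge c_2$, and hence the claim; note that this argument is uniform in $n_1$ and recovers \Cref{lemma:1vn2} as the instance $n_1=1$, thereby establishing \Cref{lemma:prob-inequality} for all $n_i<n_j$.

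I expect the only genuine subtlety — the ``hard part'' — to be the bookkeeping around degenerate ranges rather than the inequality itself: one must verify that whenever $t-n_2-s<0$ the two matched binomial coefficients vanish simultaneously, so that the factorial cancellation is legitimate exactly on the range where both sides are nonzero, and that the leftover $c_1$-terms with $r<n_2-n_1$ are correctly treated as discarded nonnegative surplus. The identity \eqref{i-xLemaFer} is precisely what packages this surplus cleanly: applied with $m_1=n_1$ and $x=n_2-n_1$ it rewrites $h_{t-n_1}(n_2,n_3,\dots,n_K)$ as $h_{t-n_1}(n_1,n_3,\dots,n_K)$ plus exactly the nonnegative correction terms indexed by $r<n_2-n_1$, giving an alternative and bookkeeping-safe way to separate the matched part from the surplus. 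Once the matching is in place, the whole proof rests on the elementary product inequality above and requires no further induction on $n_1$ beyond the base case already handled.
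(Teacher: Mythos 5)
Your argument is correct, and it takes a genuinely different route from the paper's. The paper fixes the difference $d=n_2-n_1$ and inducts on $n_1$: it unfolds each side once via \eqref{i-xLemaFer} with $x=1$, invokes the inductive hypothesis (i.e.\ the inequality for $(n_1,n_2)$ at time $t-1$) to control the first pair of terms, and is left with the single binomial inequality $\binom{t-n_1}{n_2}\binom{t-1}{n_1}\ge\binom{t-n_2}{n_1}\binom{t-1}{n_2}$, disposed of by the ratio $\tfrac{t-n_1}{t-n_2}\ge 1$. You instead expand \emph{both} counts $c_1,c_2$ fully over the partner pile's usage via \eqref{hiLemFer}, so that both become nonnegative combinations of the same quantities $h_j(n_3,\dots,n_K)$, match the $s$-th term of $c_2$ with the $(s+n_2-n_1)$-th term of $c_1$, and verify the coefficient domination $\binom{t-1}{n_1-1}\binom{t-n_1}{n_2-n_1+s}\ge\binom{t-1}{n_2-1}\binom{t-n_2}{s}$, which after cancellation is the factor-by-factor inequality $\prod_{k=1}^{n_2-n_1}(n_1-1+k)\ge\prod_{k=1}^{n_2-n_1}(s+k)$ valid since $s\le n_1-1$; the degenerate indices with $t-n_2-s<0$ are harmless because the right-hand terms vanish there. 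What your approach buys is uniformity: no induction on $n_1$ (nor any appeal to the statement at time $t-1$) is needed, \Cref{lemma:1vn2} is recovered as the instance $n_1=1$, and \Cref{lemma:prob-inequality} follows in one pass; what the paper's approach buys is that each individual step involves only a one-term unfolding and a single ratio computation, at the price of carrying the double bookkeeping of the induction on $n_1$ for every fixed $d$.
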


\begin{proof}
For every positive number $d:=n_2-n_1$, the proof is by induction on $n_1$. The previous lemma makes the base case for all $d$, therefore we now argue that if the lemma holds for $(n_1, n_2)$, with $n_2 - n_1 = d$, then it must hold for $(n_1 + 1, n_2 +1)$. Our inductive hypothesis is therefore that $\Pr[p = 1 \mid \tau = t] \geq \Pr[p = 2 \mid \tau = t]$ for all $t$, which by Equations~\ref{eq:p1t} and~\ref{eq:p2t}, is equivalent to 
\begin{equation}
    h_{t-n_1}(n_2, n_3,\dots,n_K)\cdot{t-1\choose n_1-1}\geq h_{t-n_2}(n_1, n_3,\dots,n_K)\cdot{t-1\choose n_2-1},
    \label{eq:inductive_h}
\end{equation}
for all values of $t$.
We now proceed to show that for any $t$ it holds that
\begin{equation*}
    h_{t-(n_1+1)}(n_2+1, n_3,\dots,n_K)\cdot{t-1\choose n_1}\geq h_{t-(n_2+1)}(n_1+1, n_3,\dots,n_K)\cdot{t-1\choose n_2}.
\end{equation*}
In order to improve legibility, we abbreviate ``$n_3,\dots,n_K$'' as $n_3^K$. Using the Equation \ref{i-xLemaFer} with $x=1$, the inequality to be proven is equivalent to
\begin{multline}
    \left[h_{t-n_1-1}(n_2, n_3^K) + h_{t-n_1-1-n_2}(n_3^K)\cdot{t-n_1\choose n_2}\right]\cdot{t-1\choose n_1}   \\
    \geq \left[h_{t-n_2-1}(n_1, n_3^K) + h_{t-n_2-1-n_1}(n_3^K)\cdot{t-n_2\choose n_1}\right]\cdot{t-1\choose n_2}.
    \label{hypLemmaFer}
\end{multline}

If we use our inductive hypothesis of Equation~\ref{eq:inductive_h}  for $t-1$, we get

\[
    h_{t-1-n_1}(n_2, n_3^K) \cdot {t-2 \choose n_1 - 1} \geq
    h_{t-1-n_2}(n_1, n_3^K) \cdot {t-2 \choose n_2 - 1}.
\]

Now, we use this as follows:

\begin{align}
h_{t-n_1-1} (n_2, n_3^K) \cdot {t-1 \choose n_1 }
    &=
    h_{t-n_1-1}(n_2,n_3^K)\cdot{t-2\choose n_1-1}\cdot\frac{t-1}{n_1} \notag\\
    &\geq h_{t-n_2-1}(n_1,n_3^K)\cdot{t-2\choose n_2-1} \cdot \frac{t-1}{n_2}\tag{Inductive Hyp.,  $n_1 < n_2$}\\
    & = h_{t-n_2-1} (n_1, n_3^K) \cdot {t-1 \choose n_2 }    \label{LemFeri}
\end{align}

From Equation~\ref{LemFeri}, what remains to be prove of the desired Equation~\ref{hypLemmaFer}
is
\[
 h_{t-n_1-1-n_2}(n_3^K) \cdot {t-n_1 \choose n_2} \cdot {t-1 \choose n_1} \geq h_{t- n_2 - 1- n_1}(n_3^K) \cdot {t-n_2 \choose n_1} \cdot {t-1 \choose n_2},
\]
or equivalently,
\[
{t-n_1 \choose n_2} \cdot {t-1 \choose n_1} \geq {t-n_2 \choose n_1} \cdot {t-1 \choose n_2}.
\]
But

\[\left[{t-n_1\choose n_2} \cdot {t-1 \choose n_1}\right] \div \left[{t-n_2\choose n_1}\cdot{t-1 \choose n_2}\right] = \frac{t-n_1}{t-n_2},\]
 which is greater than $1$, and thus our lemma is finally proven. 
\end{proof}

\section{Figures}
\label{app:figures}

\begin{figure}
\begin{tikzpicture}
\begin{axis}[
every axis plot/.append style={line width=0.975pt},
    title={},
    width=\linewidth,
    height=2*\axisdefaultheight,
    xlabel={Sample index under lexicographic sorting.},
    ylabel={N$^{\circ}$ of rounds },
    xmin=0,
    xmax=30,
    ymin=0,
    ymajorgrids=true,
    grid style=dashed,
    legend style={anchor=north, legend columns=2, font=\small
    },
    legend pos=north west,
]

\addplot[
    color=black,
    mark=o,
    ]
    coordinates{
    (1,9.934)(2,8.4393)(3,9.7213)(4,19.953)(5,29.9434)(6,29.9405)(7,30.2424)(8,34.8575)(9,35.0919)(10,40.0434)(11,39.9822)(12,44.87)(13,45.0767)(14,41.5046)(15,44.4105)(16,54.7736)(17,70.2762)(18,75.0147)(19,84.078)(20,89.4784)(21,98.0616)(22,103.6288)(23,106.4502)(24,120.5994)(25,127.3087)(26,147.2139)(27,146.3782)(28,168.7921)(29,185.8892)(30,195.5127)
    };
    \addlegendentry{$\hat{\tau},~$}

\addplot[
    color=black!75!white,
    mark='-',
    line width=1pt,
    dashed
    ]
    coordinates{
    (1,10)(2,10)(3,10)(4,20)(5,30)(6,30)(7,30)(8,35)(9,35)(10,40)(11,40)(12,45)(13,45)(14,45)(15,45)(16,55)(17,70)(18,75)(19,85)(20,90)(21,100)(22,105)(23,120)(24,125)(25,140)(26,150)(27,160)(28,170)(29,190)(30,200)
    };
    \addlegendentry{$K \cdot \min_c n_c, ~~~\mathcal{U}(5,[1,100])$}

\addplot[
    color=brown,
    mark=*,
    ]
    coordinates{
    (1,9.9419)(2,15.0631)(3,29.928)(4,40.0772)(5,35.7298)(6,44.9667)(7,49.6117)(8,49.53)(9,54.8963)(10,74.7897)(11,74.8293)(12,79.8677)(13,94.9931)(14,94.9597)(15,99.8048)(16,103.339)(17,122.7884)(18,115.6039)(19,134.8273)(20,139.6631)(21,149.54)(22,220.1196)(23,244.3535)(24,276.4216)(25,264.6609)(26,284.7899)(27,290.3078)(28,285.8643)(29,324.6416)(30,374.6063)
    };
    \addlegendentry{$\hat{\tau},~$}

\addplot[
    color=brown!75!white,
    mark='-',
    line width=1pt,
    dashed
    ]
    coordinates{
    (1,10)(2,15)(3,30)(4,40)(5,40)(6,45)(7,50)(8,50)(9,55)(10,75)(11,75)(12,80)(13,95)(14,95)(15,100)(16,105)(17,125)(18,130)(19,135)(20,140)(21,150)(22,220)(23,245)(24,280)(25,280)(26,285)(27,290)(28,310)(29,325)(30,375)
    };
    \addlegendentry{$K \cdot \min_c n_c, ~~~\mathcal{U}(5,[1,150])$}

\addplot[
    color=red!80!black,
    mark=triangle,
    ]
    coordinates{
    (1,4.9415)(2,4.9469)(3,24.9603)(4,30.1307)(5,30.1822)(6,35.1324)(7,45.2338)(8,44.9019)(9,50.0285)(10,80.246)(11,94.8009)(12,103.7654)(13,105.0236)(14,105.1129)(15,120.0322)(16,159.9623)(17,179.4645)(18,188.5762)(19,203.5655)(20,224.9599)(21,230.7413)(22,250.3201)(23,265.9379)(24,280.1344)(25,325.9412)(26,406.4677)(27,428.452)(28,429.3769)(29,469.8347)(30,482.1521)
    };
    \addlegendentry{$\hat{\tau},~$}

\addplot[
    color=red!50!white,
    mark='-',
    line width=1pt,
    dashed
    ]
    coordinates{
    (1,5)(2,5)(3,25)(4,30)(5,30)(6,35)(7,45)(8,45)(9,50)(10,80)(11,95)(12,105)(13,105)(14,105)(15,120)(16,160)(17,180)(18,190)(19,220)(20,225)(21,235)(22,250)(23,270)(24,295)(25,345)(26,410)(27,435)(28,440)(29,485)(30,495)
    };
    \addlegendentry{$K \cdot \min_c n_c, ~~~\mathcal{U}(5,[1,200])$}

\addplot[
    color=green!50!black,
    mark=diamond,
    ]
    coordinates{
    (1,10.037)(2,10.0036)(3,9.7693)(4,9.0274)(5,9.6171)(6,10.0195)(7,9.2534)(8,9.378)(9,15.63)(10,29.4404)(11,30.04)(12,26.1642)(13,28.4717)(14,28.9556)(15,29.5194)(16,23.9713)(17,29.1268)(18,39.2412)(19,35.2513)(20,33.8035)(21,36.949)(22,44.0554)(23,53.1577)(24,68.8683)(25,69.4012)(26,65.8378)(27,89.5372)(28,90.2103)(29,103.9039)(30,110.5215)
    };
    \addlegendentry{$\hat{\tau},~$}

\addplot[
    color=green!50!white,
    mark='-',
    line width=1pt,
    dashed
    ]
    coordinates{
    (1,10)(2,10)(3,10)(4,10)(5,10)(6,10)(7,10)(8,20)(9,20)(10,30)(11,30)(12,30)(13,30)(14,30)(15,30)(16,30)(17,30)(18,40)(19,40)(20,40)(21,40)(22,50)(23,60)(24,70)(25,70)(26,80)(27,100)(28,100)(29,120)(30,120)
    };
    \addlegendentry{$K \cdot \min_c n_c, ~~~\mathcal{U}(10,[1,50])$}

\addplot[
    color=blue!80!black, 
    mark=square,
    ]
    coordinates{
    (1,9.1749)(2,4.879)(3,8.7873)(4,9.9205)(5,19.5763)(6,19.8129)(7,29.5284)(8,28.9249)(9,39.9907)(10,35.169)(11,39.9602)(12,39.9108)(13,29.141)(14,50.0785)(15,46.0334)(16,41.647)(17,57.6858)(18,58.7554)(19,54.9278)(20,70.6439)(21,64.2634)(22,79.8152)(23,64.0309)(24,88.8161)(25,116.4721)(26,136.3903)(27,124.4048)(28,159.6326)(29,172.1167)(30,190.4695)
    };
    \addlegendentry{$\hat{\tau},~$}

\addplot[
    color=blue!50!white,
    mark='-',
    line width=1pt,
    dashed
    ]
    coordinates{
    (1,10)(2,10)(3,10)(4,10)(5,20)(6,20)(7,30)(8,30)(9,40)(10,40)(11,40)(12,40)(13,40)(14,50)(15,50)(16,50)(17,60)(18,60)(19,60)(20,70)(21,80)(22,80)(23,80)(24,90)(25,120)(26,140)(27,140)(28,160)(29,180)(30,190)
    };
    \addlegendentry{$K \cdot \min_c n_c, ~~~\mathcal{U}(10,[1,80])$}

\addplot[
    color=purple!75!black,
    mark=+,
    ]
    coordinates{
    (1,9.9158)(2,9.8035)(3,9.736)(4,15.5321)(5,17.5608)(6,20.2144)(7,12.2969)(8,29.7609)(9,30.1136)(10,39.9526)(11,39.9962)(12,36.9035)(13,46.9319)(14,50.0927)(15,49.6191)(16,59.8711)(17,64.7855)(18,62.1687)(19,89.9929)(20,89.9396)(21,83.2425)(22,86.095)(23,110.1173)(24,129.1453)(25,140.2527)(26,133.337)(27,147.6135)(28,169.586)(29,173.9076)(30,320.8101)
    };
    \addlegendentry{$\hat{\tau},~$}

\addplot[
    color=purple!50!white,
    mark='-',
    dashed
    ]
    coordinates{
    (1,10)(2,10)(3,10)(4,20)(5,20)(6,20)(7,20)(8,30)(9,30)(10,40)(11,40)(12,40)(13,50)(14,50)(15,50)(16,60)(17,70)(18,70)(19,90)(20,90)(21,100)(22,100)(23,110)(24,130)(25,140)(26,140)(27,150)(28,170)(29,190)(30,350)
    };
    \addlegendentry{$K \cdot \min_c n_c, ~~~\mathcal{U}(10,[1,100])$}

\end{axis}

\end{tikzpicture}
  \caption{Measured $\hat{\tau}$ on 30 random samples from  $\mathcal{U}^5([1,100])$, $\mathcal{U}^5([1,150])$, $\mathcal{U}^5([1,200])$, $\mathcal{U}^{10}([1,50])$, $\mathcal{U}^{10}([1,80])$, $\mathcal{U}^{10}([1,100])$.}
 \label{fig:approx-tau}
\end{figure}
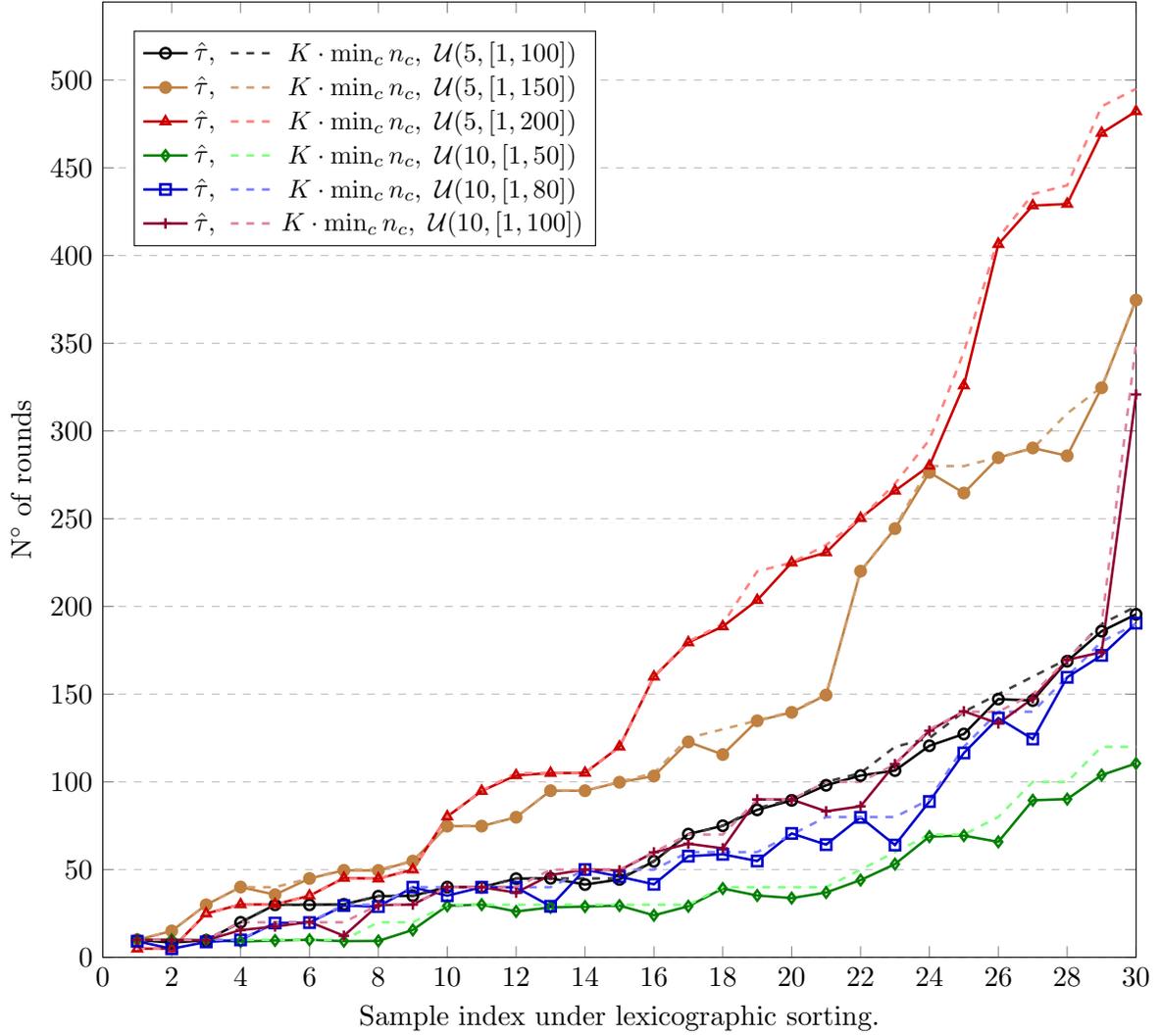

\begin{figure}
\begin{tikzpicture}
\begin{axis}[
every axis plot/.append style={line width=0.975pt},
    title={},
    width=\linewidth,
    height=2*\axisdefaultheight,
    xlabel={Sample index under lexicographic sorting.},
    ylabel={N$^{\circ}$ of rounds },
    xmin=0,
    xmax=30,
    ymin=0,
    ymax=320,
    ymajorgrids=true,
    grid style=dashed,
    legend style={anchor=north, legend columns=2, font=\small
    },
    legend pos=north west,
]

\addplot[
    color=black,
    mark=o,
    ]
    coordinates{
    (1,5.0331)(2,5.0213)(3,14.868)(4,14.7812)(5,13.7966)(6,14.9382)(7,14.9995)(8,19.5138)(9,30.0793)(10,29.8787)(11,34.6533)(12,34.4646)(13,40.0454)(14,40.0461)(15,45.1088)(16,42.4086)(17,50.245)(18,74.7504)(19,83.6878)(20,83.1132)(21,82.7851)(22,88.4792)(23,86.7434)(24,105.1201)(25,105.0324)(26,100.56)(27,118.8286)(28,137.3765)(29,133.3235)(30,161.0493)};
    \addlegendentry{$\hat{\gamma},~$}

\addplot[
    color=black!75!white,
    mark='-',
    dashed
    ]
    coordinates{
    (1,5)(2,5)(3,15)(4,15)(5,15)(6,15)(7,15)(8,20)(9,30)(10,30)(11,35)(12,35)(13,40)(14,40)(15,45)(16,45)(17,50)(18,75)(19,85)(20,85)(21,85)(22,95)(23,95)(24,105)(25,105)(26,110)(27,120)(28,140)(29,145)(30,180)
    };
    \addlegendentry{$K \cdot \min_c n_c, ~~~\mathcal{S}(5,250)$}

\addplot[
    color=brown,
    mark=*,
    ]
    coordinates{
    (1,5.0359)(2,19.9868)(3,25.096)(4,24.4915)(5,24.9627)(6,30.0736)(7,33.1984)(8,35.0566)(9,50.0248)(10,49.6527)(11,55.1544)(12,54.1888)(13,59.8597)(14,69.9325)(15,70.0921)(16,79.9185)(17,72.7885)(18,80.2563)(19,99.8943)(20,99.3682)(21,114.7687)(22,117.7758)(23,114.6278)(24,124.9281)(25,122.6437)(26,138.4924)(27,163.8124)(28,173.5975)(29,171.184)(30,183.7385)
    };
    \addlegendentry{$\hat{\gamma},~$}

\addplot[
    color=brown!75!white,
    mark='-',
    dashed
    ]
    coordinates{
    (1,5)(2,20)(3,25)(4,25)(5,25)(6,30)(7,35)(8,35)(9,50)(10,50)(11,55)(12,60)(13,60)(14,70)(15,70)(16,80)(17,80)(18,90)(19,100)(20,100)(21,115)(22,120)(23,120)(24,125)(25,125)(26,140)(27,170)(28,175)(29,175)(30,185)
    };
    \addlegendentry{$K \cdot \min_c n_c, ~~~\mathcal{S}(5,375)$}

\addplot[
    color=red!80!black,
    mark=triangle,
    ]
    coordinates{
    (1,5.0183)(2,4.8774)(3,10.0292)(4,19.8057)(5,20.0841)(6,34.7992)(7,45.1567)(8,49.9766)(9,50.221)(10,55.0317)(11,69.9234)(12,70.0434)(13,100.3621)(14,125.0292)(15,128.5195)(16,130.3446)(17,124.3487)(18,134.968)(19,135.002)(20,134.8929)(21,144.9102)(22,145.1121)(23,174.5476)(24,175.1776)(25,194.6681)(26,218.8905)(27,239.3912)(28,246.5976)(29,245.9505)(30,301.3976)
    };
    \addlegendentry{$\hat{\gamma},~$}

\addplot[
    color=red!50!white,
    mark='-',
    dashed
    ]
    coordinates{
    (1,5)(2,5)(3,10)(4,20)(5,20)(6,35)(7,45)(8,50)(9,50)(10,55)(11,70)(12,70)(13,100)(14,125)(15,130)(16,130)(17,130)(18,135)(19,135)(20,135)(21,145)(22,145)(23,175)(24,175)(25,195)(26,220)(27,240)(28,250)(29,260)(30,315)
    };
    \addlegendentry{$K \cdot \min_c n_c, ~~~\mathcal{S}(5,500)$}

\addplot[
    color=green!50!black,
    mark=diamond,
    ]
    coordinates{
    (1,5.0115)(2,7.0268)(3,8.7193)(4,9.9081)(5,8.9963)(6,9.8252)(7,19.7039)(8,19.9363)(9,20.1825)(10,27.9124)(11,28.6762)(12,27.5474)(13,29.8219)(14,29.3068)(15,25.808)(16,29.7291)(17,39.0243)(18,33.8105)(19,29.8766)(20,32.7152)(21,36.7223)(22,37.3584)(23,57.8493)(24,50.516)(25,42.8722)(26,50.3445)(27,55.3539)(28,78.6321)(29,76.9802)(30,90.8498)
    };
    \addlegendentry{$\hat{\gamma},~$}

\addplot[
    color=green!50!white,
    mark='-',
    dashed
    ]
    coordinates{
    (1,10)(2,10)(3,10)(4,10)(5,10)(6,10)(7,20)(8,20)(9,20)(10,30)(11,30)(12,30)(13,30)(14,30)(15,30)(16,30)(17,40)(18,40)(19,40)(20,40)(21,40)(22,50)(23,60)(24,60)(25,60)(26,60)(27,70)(28,80)(29,100)(30,110)
    };
    \addlegendentry{$K \cdot \min_c n_c, ~~~\mathcal{S}(10,250)$}

\addplot[
    color=blue!80!black,
    mark=square,
    ]
    coordinates{
    (1,10.0643)(2,9.9312)(3,8.6308)(4,4.3691)(5,20.0152)(6,19.4001)(7,18.5548)(8,29.7179)(9,29.9384)(10,26.411)(11,29.9148)(12,28.9381)(13,29.9207)(14,36.8749)(15,39.4931)(16,38.3526)(17,38.7136)(18,50.3788)(19,47.3627)(20,59.7676)(21,68.3427)(22,53.9777)(23,65.9354)(24,82.4571)(25,105.1053)(26,105.3012)(27,90.4947)(28,119.8385)(29,126.604)(30,215.5269)
    };
    \addlegendentry{$\hat{\gamma},~$}

\addplot[
    color=blue!50!white,
    mark='-',
    dashed
    ]
    coordinates{
    (1,10)(2,10)(3,10)(4,10)(5,20)(6,20)(7,20)(8,30)(9,30)(10,30)(11,30)(12,30)(13,30)(14,40)(15,40)(16,40)(17,40)(18,50)(19,60)(20,60)(21,70)(22,70)(23,70)(24,90)(25,110)(26,110)(27,110)(28,120)(29,130)(30,250)
    };
    \addlegendentry{$K \cdot \min_c n_c, ~~~\mathcal{S}(10,400)$}

\addplot[
    color=purple!75!black,
    mark=+,
    ]
    coordinates{
    (1,8.4671)(2,10.2952)(3,19.855)(4,20.1421)(5,20.0094)(6,30.3393)(7,39.4524)(8,40.1792)(9,43.8111)(10,41.2605)(11,48.1484)(12,45.3596)(13,41.6124)(14,47.6408)(15,47.7418)(16,58.8865)(17,55.414)(18,68.7462)(19,70.0684)(20,70.0079)(21,67.8)(22,68.307)(23,64.19)(24,80.1444)(25,79.6058)(26,88.2479)(27,100.3469)(28,113.051)(29,117.5563)(30,124.9908)
    };
    \addlegendentry{$\hat{\gamma},~$}

\addplot[
    color=purple!50!white,
    mark='-',
    dashed
    ]
    coordinates{
    (1,10)(2,10)(3,20)(4,20)(5,20)(6,30)(7,40)(8,40)(9,50)(10,50)(11,50)(12,50)(13,50)(14,50)(15,50)(16,60)(17,60)(18,70)(19,70)(20,70)(21,70)(22,80)(23,80)(24,80)(25,80)(26,90)(27,110)(28,120)(29,130)(30,130)
    };
    \addlegendentry{$K \cdot \min_c n_c, ~~~\mathcal{S}(10,500)$}

\end{axis}

\end{tikzpicture}
  \caption{Measured $\hat{\tau}$ on 30 random samples from  $\mathcal{S}(5, 250)$, $\mathcal{S}(5, 375)$, $\mathcal{S}(5, 500)$, $\mathcal{S}(10, 250)$, $\mathcal{S}(10, 400)$, $\mathcal{S}(10, 500)$.}
 \label{fig:S-distribution}
\end{figure}
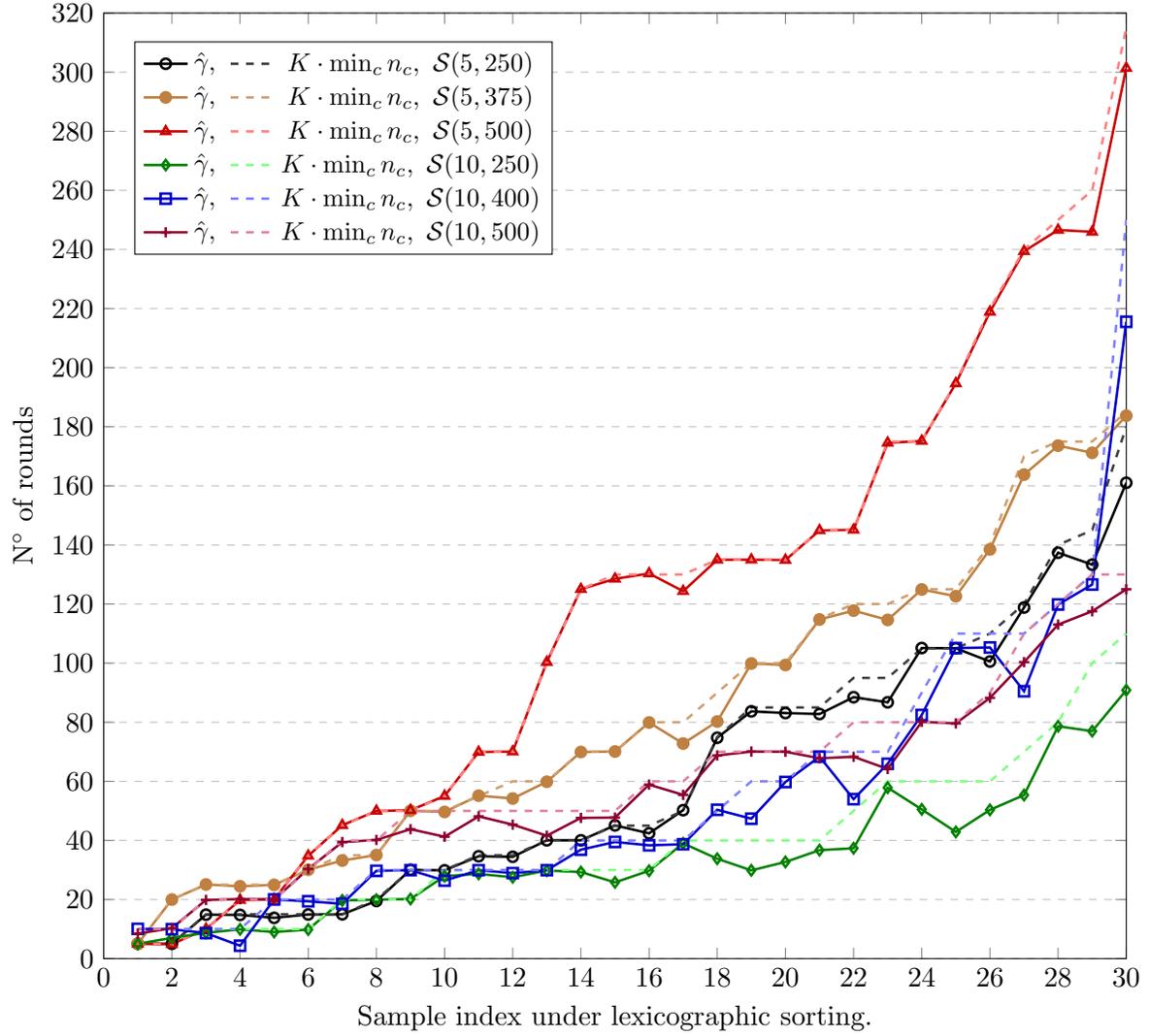

\input{fig6}

\input{fig7}

\input{fig8}

\input{fig9}

\begin{figure}
    \centering
    \begin{tikzpicture}
        \begin{axis}[
        every axis plot/.append style={line width=0.975pt},
            title={},
            width=\linewidth,
            height=\axisdefaultheight,
            xlabel={Sample index},
            ylabel={N$^{\circ}$ of rounds  },
            xmin=0,
            xmax=100,
            ymin=5000,
            ymajorgrids=true,
            grid style=dashed,
            legend pos=north west,
        ]
        \addplot[color=black, mark='-'] coordinates { (0, 9426.3)(1, 9642.65)(2, 9945.1)(3, 10041.75)(4, 9371.1)(5, 9689.15)(6, 9731.3)(7, 10066.3)(8, 10211.9)(9, 9748.1)(10, 11555.3)(11, 9682.25)(12, 10209.9)(13, 9750.4)(14, 10092.85)(15, 9752.35)(16, 9932.05)(17, 9608.2)(18, 9702.35)(19, 9533.5)(20, 9731.8)(21, 9464.45)(22, 9809.4)(23, 9506.85)(24, 9572.3)(25, 10079.7)(26, 9995.65)(27, 9581.7)(28, 10099.85)(29, 9244.25)(30, 9574.75)(31, 9870.6)(32, 11072.2)(33, 9556.4)(34, 9855.75)(35, 9791.35)(36, 9638.45)(37, 9979.65)(38, 9528.6)(39, 10185.1)(40, 10189.5)(41, 10275.8)(42, 9708.15)(43, 9518.4)(44, 9664.2)(45, 10196.05)(46, 9772.5)(47, 9874.45)(48, 9852.9)(49, 9635.7)(50, 9816.6)(51, 9652.55)(52, 10568.0)(53, 9869.25)(54, 10388.3)(55, 10552.7)(56, 9756.55)(57, 9341.3)(58, 10016.6)(59, 9705.9)(60, 9893.2)(61, 10130.25)(62, 10094.05)(63, 9914.5)(64, 10071.25)(65, 10190.75)(66, 9963.1)(67, 9923.15)(68, 9554.4)(69, 9937.8)(70, 10423.25)(71, 9368.3)(72, 10530.4)(73, 10077.85)(74, 9472.0)(75, 10566.35)(76, 9735.45)(77, 9875.9)(78, 9989.6)(79, 10666.8)(80, 9851.7)(81, 9566.5)(82, 9517.65)(83, 10037.95)(84, 10209.0)(85, 9394.55)(86, 10066.35)(87, 10387.0)(88, 9550.15)(89, 9410.7)(90, 9695.55)(91, 9608.55)(92, 9648.5)(93, 9689.55)(94, 10764.25)(95, 9648.0)(96, 9791.4)(97, 10240.0)(98, 9672.95)(99, 10195.45) };

        \addplot[color=blue, dashed, mark='-'] coordinates { (0, 6826.670800627617)(1, 6946.4470851639935)(2, 7427.249649427384)(3, 7427.249649427384)(4, 6826.670800627617)(5, 6986.411219120156)(6, 6986.411219120156)(7, 7146.457627743594)(8, 7026.394475423982)(9, 7146.457627743594)(10, 8315.209874919992)(11, 7026.394475423982)(12, 7387.082386479119)(13, 6906.502211049488)(14, 7427.249649427384)(15, 7106.41781321269)(16, 6946.4470851639935)(17, 6866.576735926264)(18, 6866.576735926264)(19, 6946.4470851639935)(20, 6826.670800627617)(21, 6906.502211049488)(22, 7146.457627743594)(23, 6866.576735926264)(24, 6906.502211049488)(25, 7106.41781321269)(26, 7026.394475423982)(27, 6866.576735926264)(28, 7266.688084697089)(29, 6866.576735926264)(30, 7026.394475423982)(31, 7186.516030662162)(32, 8072.2563819921015)(33, 6906.502211049488)(34, 7026.394475423982)(35, 6826.670800627617)(36, 6906.502211049488)(37, 7186.516030662162)(38, 6826.670800627617)(39, 7146.457627743594)(40, 7467.434624108526)(41, 7668.621033592715)(42, 6986.411219120156)(43, 6826.670800627617)(44, 6866.576735926264)(45, 7467.434624108526)(46, 7026.394475423982)(47, 7266.688084697089)(48, 7387.082386479119)(49, 6826.670800627617)(50, 7106.41781321269)(51, 6866.576735926264)(52, 7507.637190911511)(53, 7146.457627743594)(54, 7226.592892352248)(55, 7467.434624108526)(56, 7226.592892352248)(57, 6866.576735926264)(58, 7146.457627743594)(59, 6906.502211049488)(60, 7146.457627743594)(61, 7427.249649427384)(62, 7186.516030662162)(63, 7146.457627743594)(64, 6946.4470851639935)(65, 7306.801481055132)(66, 7106.41781321269)(67, 7106.41781321269)(68, 6906.502211049488)(69, 6946.4470851639935)(70, 7749.215494337081)(71, 6906.502211049488)(72, 7547.857231565537)(73, 7026.394475423982)(74, 6906.502211049488)(75, 7708.909813023698)(76, 6866.576735926264)(77, 6906.502211049488)(78, 7306.801481055132)(79, 7507.637190911511)(80, 7066.396718209659)(81, 6866.576735926264)(82, 6906.502211049488)(83, 7186.516030662162)(84, 7306.801481055132)(85, 6866.576735926264)(86, 7106.41781321269)(87, 7507.637190911511)(88, 6866.576735926264)(89, 6866.576735926264)(90, 6826.670800627617)(91, 6826.670800627617)(92, 6906.502211049488)(93, 7026.394475423982)(94, 7749.215494337081)(95, 7106.41781321269)(96, 6826.670800627617)(97, 7387.082386479119)(98, 6826.670800627617)(99, 7186.516030662162) };

\legend{$\hat{\tau}$,$(K-2)\cdot \min_c n_c - \delta$}

        \end{axis}
    \end{tikzpicture}
    \caption{Illustration for 100 samples from $\mathcal{U}^{50}([200, 500])$.}
    \label{fig:large-K}
\end{figure}
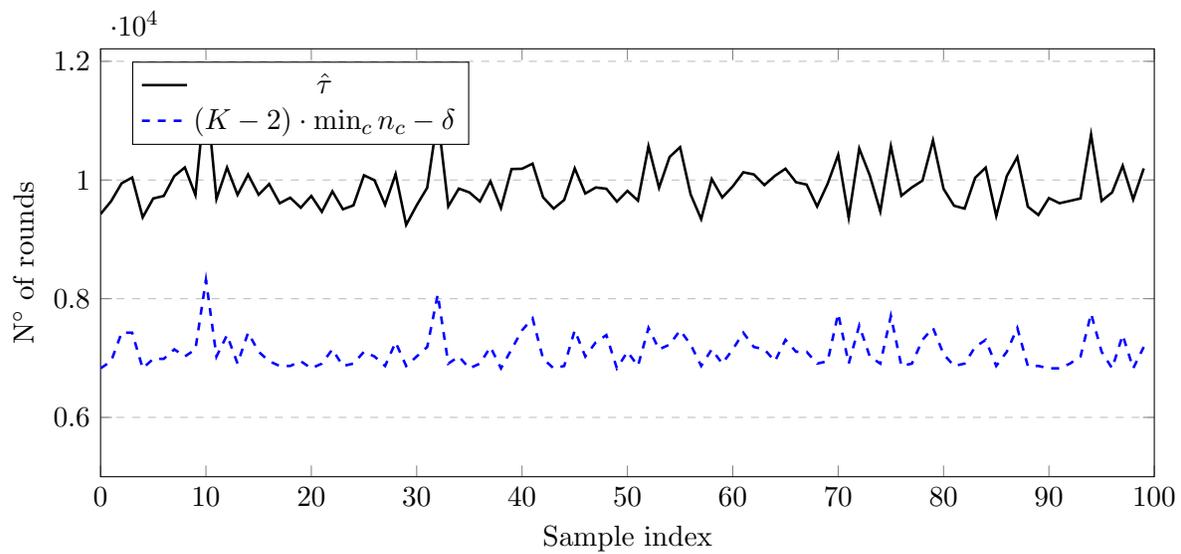

\end{document}